\newcommand{\rmv}[1]{}
\numberwithin{equation}{section}
\newcommand{\HH}{\mathcal{H}}
\newcommand{\HK}{\mathcal{K}}
\newcommand{\HM}{\mathcal{M}}
\newcommand{\HD}{\mathcal{D}}
\newcommand{\HS}{\mathcal{S}}
\newcommand{\HB}{\mathcal{B}}
\newcommand{\HN}{\mathcal{N}}
\newcommand{\D}{\mathbb{D}}
\newcommand{\C}{\mathbb{C}}
\newcommand{\R}{\mathbb{R}}
\newcommand{\Z}{\mathbb{Z}}
\newcommand{\T}{\mathbb{T}}
\newcommand{\ran}{\mathrm{ran \ }}
\newcommand{\rank}{\mathrm{rank \ }}
\newcommand{\La}{\langle}
\newcommand{\Ra}{\rangle}
\theoremstyle{plain}
\newtheorem{theorem}{Theorem}[section]
\newtheorem{lemma}[theorem]{Lemma}
\newtheorem{remark}[theorem]{Remark}
\newtheorem{proposition}[theorem]{Proposition}
\newtheorem{corollary}[theorem]{Corollary}
\theoremstyle{definition}
\newtheorem{example}[theorem]{Example}
\begin{document}

\title[Finite rank de~Branges--Rovnyak spaces]{On Dirichlet-type and $n$-isometric shifts in finite rank de~Branges--Rovnyak spaces}
\author{Shuaibing Luo and Eskil Rydhe}

\address{School of Mathematics, Hunan University, Changsha, Hunan, 410082, PR China}
\email{sluo@hnu.edu.cn}

\address{Centre for Mathematical Sciences, Lund University, Box 118, SE-221 00 Lund, Sweden}
\email{eskil.rydhe@math.lu.se}

\address{}
\email{}

\thanks{S. Luo was supported by NNSFC (12271149), Natural Science Foundation of Hunan Province (2024JJ2008). E.~Rydhe was supported by VR grant 2022-04307.}
\subjclass[2010]{46E22, 47B38}
\date{}
\begin{abstract}
This paper studies the function spaces $\HD(\mu)$ by Richter and Aleman, and $\HD_{\vec{\mu}}$ by the second author.

It is known that the forward shift $M_z$ is bounded and expansive on $\HD(\mu)$, and therefore $\HD(\mu)$ coincides with a de~Branges--Rovnyak space $\HH[B]$. We show that such a $B$ is rational if and only if $\mu$ is finitely atomic, and this happens exactly when the corresponding defect operator has finite rank. We also outline a method for calculating the reproducing kernel of $\HD(\mu)$ for finitely atomic $\mu$.

Similarly, we characterize the allowable tuples $\vec{\mu} = (\frac{|dz|}{2\pi}, \mu_1, \ldots, \mu_{n-1})$ such that $M_z$ on $\HD_{\vec{\mu}}$ is expansive with finite rank defect operator. This investigation provides many interesting examples of normalized allowable tuples $\vec{\mu}$.
\end{abstract}
\keywords{de~Branges--Rovnyak space; Dirichlet type operator; $n$-isometry.}

\maketitle


\section{Introduction}

Throughout this paper, $T$ will denote a bounded linear operator on a complex separable Hilbert space $\HH$. In the situations we consider, $T$ also satisfies the following assumptions.
\begin{itemize}
	\item[(A1)] $T$ is \emph{analytic}, i.e. $\bigcap_{n\geq 1}T^n\HH=\{0\}$.
	\item[(A2)] $T$ is \emph{expansive}, i.e. $\|Tx\|\ge \|x\|$ for all $x\in\HH$.
	\item[(A3)] $\dim\ker T^* = 1$.
	\item[(A4)] $T$ is \emph{cyclic} with respect to any non-zero $e\in\ker T^*$.
\end{itemize}
It is known that $T$ satisfies (A1)--(A3) if and only if $T$ is unitarily equivalent to $(M_z,\HH[B])$, the \emph{forward shift} acting on a reproducing kernel Hilbert space (RKHS) of de~Branges--Rovnyak type~\cite[Theorem~4.6]{LGR}. A brief introduction to various concepts and notation will be given in the next section.

We are motivated by the study of certain classes of operators, namely \emph{Dirichlet type operators} and \emph{higher order isometries}. For this purpose, we define the higher order \emph{defect operators} $\{\Delta^{(n)}\}_{n=0}^\infty$ by $\Delta^{(0)}:=I$, and $\Delta^{(n)}:=T^*\Delta^{(n-1)}T-\Delta^{(n-1)}$ for $n\ge 1$. Note that $T$ is expansive if and only if the standard defect operator $\Delta:=\Delta^{(1)}\ge 0$.

The operator $T$ is of \emph{Dirichlet type} if it satisfies (A1) and (A3), together with the alternating defect condition $(-1)^n\Delta^{(n)}\le 0$ for all $n\ge 1$. The alternating defect condition clearly implies (A2). A result by Aleman~\cite{Al93} states that $T$ is of Dirichlet type if and only if $T$ is unitarily equivalent to $(M_z,\HD(\mu))$, where $\HD(\mu)$ is the superharmonically weighted Dirichlet space associated to some finite positive measure $\mu$ on the closed unit disc $\overline{\D}$. Since polynomials are dense in $\HD(\mu)$, Dirichlet type operators satisfy (A4).

It is easy to see that $\HD(\mu)$ is a RKHS, but at the same time notoriously difficult to identify the reproducing kernel associated to a given $\mu$. Another result by Aleman and Malman~\cite{AM19} implies that since the \emph{backward shift} $L$ is a contraction on $\HD(\mu)$, the kernel is in fact of de~Branges--Rovnyak type, i.e. $\HD(\mu)$ is \emph{equal to} $\HH[B]$ for some function $B$ in the (normalized) \emph{Schur class} $\HS_0(\ell^2,\C)$. Our first result is a qualitative statement about the relation between $\HD(\mu)$ and $\HH[B]$ in the case where $(M_z,\HD(\mu))$ has finite rank defect operator.
\begin{theorem}\label{theorem:HBDmuequal}
	Let $\mu$ be a finite positive Borel measure on $\overline{\D}$, $T=(M_z,\HD(\mu))$, and $\Delta=T^*T-I$. Take $B\in\HS_0(\ell^2,\C)$ such that $\HH[B]=\HD(\mu)$. The following are equivalent:
	\begin{enumerate}
		\item $\mu$ is finitely atomic.
		\item $\rank\Delta < \infty$.
		\item $B$ is rational.
	\end{enumerate}
	If either condition holds, then $\mu=\sum_{j=1}^nc_j\delta_{\lambda_j}$, where $n=\rank\Delta= \rank B = \deg B$, $c_1,\ldots,c_n>0$, and $\lambda_1,\ldots,\lambda_n\in \overline{\D}$ are distinct.
\end{theorem}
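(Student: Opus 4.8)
The plan is to prove the implications $(1)\Rightarrow(2)\Rightarrow(1)$ and $(2)\Leftrightarrow(3)$, reading off the quantitative statement along the way. Everything rests on Richter's identity
\[
\|M_zf\|_{\HD(\mu)}^2=\|f\|_{\HD(\mu)}^2+\int_{\overline{\D}}|f|^2\,d\mu,\qquad f\in\HD(\mu)
\]
(for $\zeta\in\bD$ the value $f(\zeta)$ is the non-tangential boundary value, which exists $\mu|_{\bD}$-a.e.\ for $f\in\HD(\mu)$). Polarising gives $\langle\Delta f,g\rangle_{\HD(\mu)}=\int_{\overline{\D}}f\bar g\,d\mu$; equivalently, the inclusion $J\colon\HD(\mu)\to L^2(\mu)$ is bounded with $\Delta=J^*J$, so $\rank\Delta=\rank J$.

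\textbf{The equivalence $(1)\Leftrightarrow(2)$ and the value of $n$.} If $\mu=\sum_{j=1}^{n}c_j\delta_{\lambda_j}$, then $\langle\Delta f,f\rangle=\sum_{j}c_j|f(\lambda_j)|^2$, and each evaluation $f\mapsto f(\lambda_j)$ is bounded on $\HD(\mu)$ — automatically for $\lambda_j\in\D$, and for $\lambda_j\in\bD$ by the identity above (Richter--Sundberg). Hence $\Delta=\sum_{j}c_j\,k_{\lambda_j}\otimes k_{\lambda_j}$ for the corresponding evaluation vectors, so $\rank\Delta\le n$; and since polynomials belong to $\HD(\mu)$ and separate the distinct points $\lambda_j$, the $k_{\lambda_j}$ are linearly independent and $\rank\Delta=n$. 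Conversely, if $\rank\Delta=n<\infty$, then (polynomials being dense in $\HD(\mu)$ and $\Delta=J^*J$) the Gram matrix $\big(\int_{\overline{\D}}z^{j}\bar z^{k}\,d\mu\big)_{j,k\ge0}$ of $\{z^{j}\}_{j\ge0}$ in $L^2(\mu)$ has rank $n$, i.e.\ $\{z^{j}\}_{j\ge0}$ spans an $n$-dimensional subspace of $L^2(\mu)$. Then some monic polynomial of degree $\le n$ vanishes $\mu$-a.e., so $\mu$ is carried by a finite set and is finitely atomic; writing $\mu=\sum_{j=1}^{m}c_j\delta_{\lambda_j}$ with distinct $\lambda_j$ and $c_j>0$, the first part gives $m=\rank\Delta=n$.

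\textbf{The equivalence $(2)\Leftrightarrow(3)$ and the identities $\rank\Delta=\rank B=\deg B$.} Use $\HD(\mu)=\HH[B]$ with reproducing kernel $k_w(z)=(1-B(z)B(w)^*)/(1-z\bar w)$, $B(0)=0$, $\ker T^*=\C1$. Since $\HD(\mu)$ is $M_z$-invariant with bounded evaluations, $T^*k_w=\bar w\,k_w$, whence $(I-TT^*)k_w=1-B(\cdot)B(w)^*$; as the $k_w$ span $\HD(\mu)$ and $k_0=1$, this gives $\ran(I-TT^*)=\C1\oplus\operatorname{span}\{B(\cdot)\eta:\eta\in\ell^2\}$ (the sum direct since $B(0)=0$), so $\rank B=\rank(I-TT^*)-1$. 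Writing $T=V|T|$ with $V$ an isometry onto $\HD(\mu)\ominus\C1$, one computes $I-TT^*=P_{\C1}-V\Delta V^*$ with $\ran(V\Delta V^*)\perp1$ (because $V^*1=0$), so $\rank(I-TT^*)=1+\rank\Delta$; combining, $\rank B=\rank\Delta$. In particular $(2)$ is equivalent to $\rank B<\infty$, and $(3)\Rightarrow\rank B<\infty$ is immediate. To upgrade this to ``$B$ rational with $\deg B=n$'', observe that the polar decomposition also gives $\operatorname{span}\{B(\cdot)\eta:\eta\in\ell^2\}=V(\ran\Delta)$, and then (using $LT=I$ and that $(I+\Delta)^{-1/2}$ preserves $\ran\Delta$) that $\operatorname{span}\{L\,B(\cdot)\eta:\eta\in\ell^2\}=\ran\Delta$; by Kronecker's theorem this means $\deg B=\dim\bigvee_{l\ge0}L^{l}(\ran\Delta)$. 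Thus ``$(2)\Rightarrow(3)$ with $\deg B=n$'' is \emph{equivalent} to the statement that $\ran\Delta=\operatorname{span}\{k_{\lambda_1},\dots,k_{\lambda_n}\}$ is invariant under the backward shift $L$ on $\HD(\mu)$.

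I expect this $L$-invariance to be the main obstacle. For general $\mu$ the subspace $\ran\Delta$ is infinite-dimensional and far from $L$-invariant (the classical Dirichlet space), so the assertion is genuinely about the special structure of finitely atomic $\mu$. I would establish it by computing the reproducing kernel $k_w$ of $\HD(\mu)$ explicitly — the ``method'' advertised in the abstract — realising $\HD(\mu)$ as $H^2$ re-normed by the finitely many local Dirichlet forms, solving for $k_w$, and checking that $k_w(z)$, hence each $k_{\lambda_j}$ and hence $B(z)B(w)^*=1-(1-z\bar w)k_w(z)$, is rational in $(z,\bar w)$ with poles arranged so that $\operatorname{span}\{k_{\lambda_j}\}$ is backward-shift invariant (equivalently, so that $B$ is rational of degree $n$). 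The extra difficulty when $\mu$ carries boundary atoms $\lambda_j\in\bD$ is that $f\mapsto(f-f(\lambda_j))/(z-\lambda_j)$ is unbounded on $H^2$ and $\HD(\delta_{\lambda_j})\subsetneq H^2$, so the re-norming has to be handled intrinsically inside $\HD(\mu)$. By contrast, the operator-theoretic steps above are routine once $\Delta=J^*J$ and the $\HH[B]$-kernel are in hand.
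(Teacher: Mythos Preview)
Your equivalence $(1)\Leftrightarrow(2)$ is correct, and your $(2)\Rightarrow(1)$ via the $L^2(\mu)$ Gram matrix is in fact more direct than the paper's route: the paper instead closes the cycle via $(3)\Rightarrow(1)$, producing a wandering vector $\varphi\in(\ran\Delta)^\perp$ for the isometric part of $T$, observing that $0=\langle\Delta\varphi,\varphi\rangle=\int_{\overline{\D}}|\varphi|^2\,d\mu$, and reading off that $\mu$ is supported on the (finitely many) zeros of $\varphi$. Your moment argument avoids this structural input entirely.

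The genuine gap is in your handling of $\deg B=n$. You correctly reduce to backward-shift invariance of $\ran\Delta=\operatorname{span}\{k_{\lambda_1},\dots,k_{\lambda_n}\}$, but then propose to verify it by computing the reproducing kernel of $\HD(\mu)$ explicitly. This is both harder than necessary and somewhat circular (the paper carries out that kernel computation only \emph{after} the theorem is in hand). The observation you are missing is that the Richter identity you already quoted polarises to
\[
T^*\Delta T-\Delta=\Delta^{(2)}\le 0,
\]
since $\int_{\overline{\D}}|zf|^2\,d\mu\le\int_{\overline{\D}}|f|^2\,d\mu$. From $0\le T^*\Delta T\le\Delta$ one reads off immediately that $\ker\Delta$ is $T$-invariant, hence $\overline{\ran\Delta}$ is $T^*$-invariant; this is exactly the paper's Lemma~3.1, and combined with the identity $\deg B=\dim[\ran\Delta]_{T^*}$ from \cite{LGR} it gives $\deg B=\rank\Delta$ at once. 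In fact you are even closer than that: having already written $\Delta=\sum_j c_j\,k_{\lambda_j}\otimes k_{\lambda_j}$, the $T^*$-invariance of $\ran\Delta$ is a one-liner from $T^*k_{\lambda_j}=\bar\lambda_j k_{\lambda_j}$, and the passage from $T^*$-invariance to $L$-invariance (which is what your Kronecker formulation needs) is a general fact about $\HH[B]$-shifts, again from \cite{LGR}. No explicit kernel is required.
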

As a complementary result, a given space $\HH[B]$ coincides with $\HD(\mu)$ for some discrete measure if and only if the defect operator of $(M_z,\HH[B])$ takes a particular form.
\begin{theorem}\label{dirichlettype}
Suppose $B\in\HS_0(\ell^2,\C)$ is such that $M_z\HH[B]\subseteq\HH[B]$ and polynomials are dense in $\HH[B]$. Let $T=(M_z,\HH[B])$ and $\Delta = T^*T - I$. The following are equivalent:
\begin{enumerate}
	\item There exists a finite positive discrete measure $\mu$ on $\overline{\D}$ such that $\HD(\mu)=\HH[B]$.
	\item There exists $c_1, c_2, \ldots>0$ and distinct $\lambda_1, \lambda_2,\ldots\in\overline{\D}$ such that $\Delta = \sum_{i=1}^\infty c_i K_{\lambda_i}^B\otimes K_{\lambda_i}^B$, where the sum converges in the strong operator topology.
\end{enumerate}
\end{theorem}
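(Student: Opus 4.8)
The plan is to exploit the structure theory connecting $\HD(\mu)$ spaces with the defect operator. The key computational fact I would establish first is that for $T=(M_z,\HD(\mu))$, the defect operator $\Delta=T^*T-I$ acts by
\[
\langle \Delta f, g\rangle_{\HD(\mu)} = \int_{\overline{\D}} \left(\text{something involving local Dirichlet integrals}\right)\, d\mu,
\]
and more precisely, using the theory of local Dirichlet integrals (Richter--Sundberg), one knows that $\Delta = \int_{\overline{\D}} \widehat{\delta}_\lambda \otimes \widehat{\delta}_\lambda \, d\mu(\lambda)$ for suitable vectors $\widehat{\delta}_\lambda \in \HD(\mu)$; when $\mu$ is discrete, $\mu=\sum c_i\delta_{\lambda_i}$, this collapses to a strongly convergent sum $\Delta = \sum c_i \widehat{\delta}_{\lambda_i}\otimes\widehat{\delta}_{\lambda_i}$. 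Then I would identify $\widehat{\delta}_{\lambda_i}$, up to a positive scalar, with the reproducing kernel $K^B_{\lambda_i}$ of $\HH[B]=\HD(\mu)$; this is the heart of the direction (1)$\Rightarrow$(2). The identification should follow because $\widehat{\delta}_\lambda$ is characterized by an extremal/orthogonality property — $\langle f, \widehat{\delta}_\lambda\rangle$ measures the mass that $f$ puts at $\lambda$ — which in a RKHS is exactly what $K^B_\lambda$ does, and because for the point mass at $\lambda$ the local Dirichlet integral $D_\lambda(f)$ equals a constant times $|f(\lambda) - (\text{boundary/radial value})|^2$ or, in the interior case, $|f(\lambda)|^2$-type expression, which again is governed by point evaluation.

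For the converse (2)$\Rightarrow$(1), I would argue as follows. Suppose $\Delta=\sum_i c_i K^B_{\lambda_i}\otimes K^B_{\lambda_i}$ strongly. Build a candidate measure $\mu := \sum_i c_i' \delta_{\lambda_i}$ (with $c_i'$ a renormalization of $c_i$ accounting for the norming of $K^B_{\lambda_i}$ versus $\widehat\delta_{\lambda_i}$, which in general will require knowing $\|K^B_{\lambda_i}\|$, i.e. the diagonal of the kernel). Then $(M_z,\HD(\mu))$ and $(M_z,\HH[B])$ are both cyclic analytic operators with one-dimensional cokernel, polynomials are dense in both, and they have the same defect operator expressed in the same way against the kernels; one then wants to invoke a uniqueness result — in the spirit of \cite[Theorem~4.6]{LGR} together with Aleman's characterization of Dirichlet type operators — to conclude the two operators are unitarily equivalent via a map fixing the reproducing kernels, hence $\HD(\mu)=\HH[B]$ with equality of norms. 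Concretely I would verify that the alternating sign condition $(-1)^n\Delta^{(n)}\le 0$ holds for $T=(M_z,\HH[B])$ under hypothesis (2): since each rank-one piece $K^B_{\lambda_i}\otimes K^B_{\lambda_i}$ behaves, under the recursion $\Delta^{(n)}=T^*\Delta^{(n-1)}T-\Delta^{(n-1)}$, like the defect sequence of a one-atom Dirichlet space (which is known to satisfy the alternating condition), the full $\Delta^{(n)}$ inherits it; then Aleman's theorem gives a measure $\mu$ with $\HD(\mu)=\HH[B]$, and Theorem~\ref{theorem:HBDmuequal} forces $\mu$ to be discrete once we know (using (2)) that it is built from finitely or countably many atoms — actually more carefully, one shows the atoms of $\mu$ must be exactly the $\lambda_i$.

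The main obstacle I anticipate is the precise bookkeeping in the correspondence $\widehat\delta_\lambda \leftrightarrow K^B_\lambda$, especially the normalization constants and the boundary case $\lambda\in\partial\D$ (where the local Dirichlet integral and the kernel both degenerate in delicate ways, and $\|K^B_\lambda\|$ may be infinite unless $B$ has an angular derivative there). I would handle the interior and boundary atoms separately, and for boundary atoms lean on the known description of $\HD(\delta_\zeta)$ (a one-dimensional perturbation of $H^2$, with kernel computed explicitly) to pin down the constant. A secondary difficulty is justifying the strong convergence and the interchange of the infinite sum with the operator recursion defining $\Delta^{(n)}$; this should be routine given that each summand is positive and the partial sums are monotone, but it needs a line. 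Finally, verifying that the unitary equivalence given by the abstract structure theorem actually respects point evaluations — so that ``unitarily equivalent'' upgrades to ``equal as spaces of functions'' — will use that $M_z$-cyclicity from $\ker T^*$ pins down the functional model, as in the cited \cite{LGR}.
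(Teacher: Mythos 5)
Your proposal follows essentially the same route as the paper: for (ii)$\Rightarrow$(i) the paper likewise verifies the alternating defect condition by using $T^*K^B_{\lambda_i}=\overline{\lambda_i}K^B_{\lambda_i}$ to compute $(-1)^n\Delta^{(n)}=-\sum_i c_i(1-|\lambda_i|^2)^{n-1}K^B_{\lambda_i}\otimes K^B_{\lambda_i}\le 0$, invokes Aleman's model theorem, and then upgrades unitary equivalence to equality exactly as you indicate, via $U1=c$ on the one-dimensional cokernel and density of polynomials. The one correction concerns (i)$\Rightarrow$(ii): your anticipated difficulty with normalizing $\widehat\delta_\lambda$ against $K^B_\lambda$ is a non-issue, because the local Douglas formula $D_\lambda(zf)=D_\lambda(f)+|f(\lambda)|^2$, i.e.\ \eqref{eq:DirichletMzIdentity}, gives $\La \Delta f,f\Ra=\int_{\overline{\D}}|f|^2\,d\mu=\sum_i c_i|f(\lambda_i)|^2$ directly, so $\Delta=\sum_i c_iK^B_{\lambda_i}\otimes K^B_{\lambda_i}$ with the \emph{same} coefficients and no constants to chase (note that $D_\lambda(f)$ itself is not a point-evaluation expression as you suggest, and boundedness of evaluation at each atom --- hence existence of $K^B_{\lambda_i}$ even for $\lambda_i\in\T$ --- is automatic from $c_i|f(\lambda_i)|^2\le\La\Delta f,f\Ra$). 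Finally, the same identity, rather than Theorem~\ref{theorem:HBDmuequal} (which only covers finitely atomic $\mu$ and so does not apply when there are infinitely many $\lambda_i$), is what identifies the measure produced by Aleman's theorem with $\sum_i c_i\delta_{\lambda_i}$ at the end of (ii)$\Rightarrow$(i).
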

We write $b$ in place of $B$ in the case where $B$ is scalar-valued. The problem of identifying certain $\HH(b)$ with local Dirichlet spaces was indicated in~\cite{RS91}, pioneered by Sarason~\cite{Sa97}, and then further studied in \cite{CGR10, CR13, EFKKMR16}. As a demonstration of our results, we recover the following result, which is a reformulation of \cite[Theorem~5.1]{EFKKMR16}.
\begin{corollary}\label{pointmasscaseCopy}
	Let $b\in\HS_0(\C,\C)\setminus \{0\}$. The following are equivalent:
	\begin{enumerate}
		\item There exists a finite positive measure $\mu$ on $\overline{\D}$ such that $\HD(\mu) = \HH(b)$.
		\item There exists $\beta,\gamma\in\D$ such that $0<|\gamma|\le 1-|\beta|$ and $b(z)=\frac{\gamma z}{1-\beta z}$.
	\end{enumerate}
	If either condition holds, then $\mu = c\delta_\lambda$ for some $c>0$ and $\lambda\in\overline{\D}$.
\end{corollary}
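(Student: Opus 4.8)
The plan is to obtain both implications from Theorems~\ref{theorem:HBDmuequal} and~\ref{dirichlettype}, applied with $B=b$, together with elementary facts about degree-one functions in the scalar Schur class $\HS_0(\C,\C)$. I expect the one place where genuine work is needed to be the identification of the defect operator $\Delta=T^*T-I$ of $T=(M_z,\HH(b))$ in the scalar case.

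Suppose first that (i) holds: $\HD(\mu)=\HH(b)$ for some finite positive Borel measure $\mu$ on $\overline{\D}$. Since $M_z$ is bounded (indeed expansive) on $\HD(\mu)=\HH(b)$, the symbol $b$ is nonextreme, and it is well known that a \emph{scalar} de~Branges--Rovnyak space with nonextreme symbol has defect operator of rank at most one; moreover $\rank\Delta\ge1$, since $\Delta=0$ would make $M_z$ an isometry and hence force $b=0$ (recall $b(0)=0$). Thus $\rank\Delta=1<\infty$, and Theorem~\ref{theorem:HBDmuequal} (applied to this $\mu$ with $B=b$) gives $\mu=\sum_{j=1}^n c_j\delta_{\lambda_j}$ with $n=\rank\Delta=\rank b=\deg b$, so $n=1$: thus $\mu=c\delta_\lambda$ for some $c>0$ and $\lambda\in\overline{\D}$, and $b$ is rational of degree one. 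It remains to observe that a degree-one $b\in\HS_0(\C,\C)$ must have the stated form: normalizing the denominator, $b(0)=0$ removes the constant term of the numerator; analyticity and boundedness of $b$ on $\D$ place its pole outside $\overline{\D}$, i.e.\ $\beta\in\D$; degree exactly one gives $\gamma\ne0$; and $\|b\|_{H^\infty}=\frac{|\gamma|}{1-|\beta|}\le1$ is precisely $0<|\gamma|\le1-|\beta|$, which in turn forces $\gamma\in\D$.

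Conversely, suppose (ii): $b(z)=\frac{\gamma z}{1-\beta z}$ with $\beta,\gamma\in\D$ and $0<|\gamma|\le1-|\beta|$. Then $b$ is a non-constant rational function in the closed unit ball of $H^\infty$ that is not inner, hence a nonextreme point of that ball; consequently $M_z\HH(b)\subseteq\HH(b)$ and polynomials are dense in $\HH(b)$, so Theorem~\ref{dirichlettype} applies with $B=b$. It therefore suffices to check that $\Delta=T^*T-I$ has the form $\Delta=c\,K_\lambda^b\otimes K_\lambda^b$ for a single $c>0$ and $\lambda\in\overline{\D}$: then Theorem~\ref{dirichlettype} produces a finite positive discrete measure $\mu$ with $\HD(\mu)=\HH(b)$, and $\rank\Delta=1$ together with Theorem~\ref{theorem:HBDmuequal} forces $\mu$ to be a single point mass $c\delta_\lambda$. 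To verify the rank-one form of $\Delta$ I would compute $\langle\Delta f,f\rangle=\|M_zf\|_{\HH(b)}^2-\|f\|_{\HH(b)}^2$ directly from the kernel $K_w^b(z)=\frac{1-\overline{b(w)}b(z)}{1-\bar wz}$ and confirm that it collapses to $c|f(\lambda)|^2$, with $\lambda=\bar\beta/|\beta|\in\partial\D$ when $|\gamma|=1-|\beta|$ and $\beta\ne0$, and $\lambda$ an appropriate point of $\D$ otherwise. This last computation --- essentially Sarason's identification of such $\HH(b)$ with local Dirichlet spaces, which is the content underlying \cite[Theorem~5.1]{EFKKMR16} --- is the one step requiring an explicit calculation; the remainder is an application of the two theorems and routine bookkeeping in $\HS_0(\C,\C)$.
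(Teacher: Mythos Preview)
Your forward implication $(i)\Rightarrow(ii)$ is essentially the paper's: both use that scalar $b\ne 0$ has $\rank b=1$, then invoke Theorem~\ref{theorem:HBDmuequal} to get $\deg b=1$ and $\mu=c\delta_\lambda$. One small loose end: the inequality $|\gamma|\le 1-|\beta|$ alone does not force $\gamma\in\D$ when $\beta=0$; you must use the nonextremality of $b$ (which you have already noted) to rule out $|\gamma|=1$, exactly as the paper does.

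For $(ii)\Rightarrow(i)$ you have the right target --- $\Delta=c\,K_\lambda^b\otimes K_\lambda^b$ followed by Theorem~\ref{dirichlettype} --- but you defer the identification to a direct kernel calculation that you do not perform and that would, in effect, reprove \cite[Theorem~5.1]{EFKKMR16}. The paper avoids this entirely with a short structural argument: from the standard identity $L^{*}=T-b\otimes Lb$ (see \cite[Theorem~18.22]{FM162}) one reads off $T^{*}=L+Lb\otimes b$ and hence
\[
\Delta=T^{*}T-I=(1+\|b\|_{\HH(b)}^{2})\,Lb\otimes Lb.
\]
Since $Lb=\frac{\gamma}{1-\beta z}$, a one-line check gives $T^{*}(Lb)=\alpha\,Lb$ with $\alpha=\beta+\langle Lb,b\rangle$, so $[\ran\Delta]_{T^{*}}=\C\,Lb$ is one-dimensional; the remark following Lemma~\ref{lemma:MateFunctionAndPhi} then identifies $Lb$ (up to a scalar) as a reproducing kernel $K_\lambda^b$, and Theorem~\ref{dirichlettype} finishes the argument. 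This route never needs to locate $\lambda$ explicitly or expand $\|M_zf\|_{\HH(b)}^2$ from the kernel, and it yields the rank-one form of $\Delta$ for free rather than as something to be verified.
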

\begin{proof}
	Assume $(i)$. By Theorem \ref{theorem:HBDmuequal}, we have that $b$ is rational of degree $1$ with $b(0)=0$. So $b(z)=\frac{\gamma z}{1-\beta z}$ for some $\beta,\gamma\in\C$. Since $b$ is analytic on $\D$ and $\sup_{z\in\D}|b(z)|\le 1$, we obtain $\beta\in\D$ and $0<|\gamma|\le 1-|\beta|$. Moreover, $M_z\HH(b)\subseteq\HH(b)$, which is equivalent to the ``non-extremal'' condition that $\log (1-|b|^2)$ is integrable on the unit circle. This implies $\gamma\in\D$.
	
	Assume $(ii)$. Then $M_z\HH(b)\subseteq\HH(b)$. By \cite[Theorem 18.22]{FM162}, $L^{\ast}=T-b\otimes Lb$. So $T^{\ast}=L+Lb\otimes b$ and $\Delta = T^*T - I = (1+\left\Vert b\right\Vert ^{2})Lb\otimes Lb$. Since  $Lb = \frac{\gamma}{1 - \beta z}$, it follows that $T^* (Lb) = \alpha (Lb), \alpha = \beta + \langle Lb, b\rangle$. Thus the smallest closed $T^*$-invariant subspace $[\ran \Delta]_{T^*}$ containing $\ran \Delta$ is  $\C (Lb)$. So $Lb$ is a reproducing kernel in $\HH(b)$, see the remark after Lemma \ref{lemma:MateFunctionAndPhi}.
	Then Theorem~\ref{dirichlettype} implies the existence of the discrete measure $\mu$.
	
	The final assertion is immediate from Theorem~\ref{theorem:HBDmuequal}.
\end{proof}
\begin{remark}
	In the corollary, the case $\lambda\in\T$ originally studied by Sarason~\cite{Sa97} corresponds exactly to $|\gamma|=1-|\beta|$, see also \cite{KZ15}.
\end{remark}

We now direct our attention to the so-called higher order isometries studied by Agler and Stankus~\cite{AS95}. The operator $T$ is called an \emph{$n$-isometry} whenever $\Delta^{(n)}=0$. The case $n=2$ was originally studied by Agler~\cite{Agler90}. It is well-known that if $T$ is a $2$-isometry satisfying (A1) and (A3), then (A2) and (A4) are also satisfied; in fact $T$ is of Dirichlet type. For $n\ge 3$, these implications no longer hold. Hence the explicit mention of (A2) and (A4).

An operator $T$ is an $n$-isometry satisfying (A1), (A2), (A4) if and only if $T$ is unitarily equivalent to $(M_z,\HD_{\vec{\mu}})$~\cite{Ry19}. Here, $\vec{\mu}=(\mu_0,\ldots,\mu_{n-1})$ is an allowable $n$-tuple of distributions on the unit circle $\T$, with $\mu_0$ being normalized Lebesgue measure, and $\HD_{\vec{\mu}}$ is the corresponding harmonically weighted space of analytic functions. If we add assumption (A3), then $\HD_{\vec{\mu}}$ coincides with some $\HH[B]$, similar to the case of Dirichlet type operators.

It is rather plain that if $\vec{\mu}$ consists exclusively of non-negative measures, then $(M_z,\HD_{\vec{\mu}})$ is expanding, see~\cite[Remark~6.9]{Ry19}. We shall see that if $n\ge 3$, then this excludes the possibility that $\rank\Delta <\infty$, see Corollary~\ref{infiniterank}. In fact, we obtain a complete description of allowable tuples for which $T=(M_z,\HD_{\vec{\mu}})$ satisfies (A1)--(A4) and has finite rank defect operator. In the interest of space, we state only the rank one case here, and refer to Theorem~\ref{theorem:H[B]=Dvecmu} for the general case.

\begin{theorem}\label{theorem:H(b)=Dvecmu}
	Suppose $b\in\HS_0(\C,\C)$ satisfies \eqref{eq:MateExistenceCondition}, and that $T=(M_z,\HH(b))$ is a strict $n$-isometry for some $n\in\Z_{\ge 1}$. Then $n=2m$ for some $m\in\Z_{\ge 1}$. Moreover, there exists $\lambda\in\T$ and a polynomial $p(z)=\sum_{j=0}^{m-1}c_jz^j$ with the following properties: $p(\lambda)\ne 0$, and if $\vec{\mu}=(\frac{|dz|}{2\pi},\mu_1,\ldots,\mu_{2m-1})$ is given by
	\begin{multline}\label{eq:H(b)=Dvecmu}
		\hat \mu_{i}(k)
		=
		\overline{\hat \mu_{i}(-k)}
		\\
		=
		\overline{\lambda}^k\sum_{j_1,j_2=0}^{m-1}c_{j_1}\overline{c_{j_2}}\lambda^{j_1-j_2}
		\sum_{l=0}^{i-1}(-1)^{i-1-l}\binom{i-1}{l}\binom{l+j_1}{m-1}\binom{l+k+j_2}{m-1},
	\end{multline}
	where $k\in\Z_{\geq 0}$ and $1\le i\le 2m-1$, then $\HH(b)=\HD_{\vec{\mu}}$.
	
	Conversely, if $m\in\Z_{\ge 1}$, $\lambda\in\T$, $p(z)=\sum_{j=0}^{m-1}c_jz^j$ with $p(\lambda)\ne 0$, and $\vec{\mu}=(\frac{|dz|}{2\pi},\mu_1,\ldots,\mu_{2m-1})$ is given by \eqref{eq:H(b)=Dvecmu}, then $\vec{\mu}$ is a normalized allowable $2m$-tuple, and there exists $b\in\HS_0(\C,\C)$ such that $\HH(b)=\HD_{\vec{\mu}}$.
\end{theorem}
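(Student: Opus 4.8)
The plan mirrors, in the rank‑one setting, the finite‑rank analysis behind Theorem~\ref{theorem:H[B]=Dvecmu}. For the forward direction one may assume $b\not\equiv0$. Since $b$ satisfies~\eqref{eq:MateExistenceCondition}, $M_z$ is bounded on $\HH(b)$ and, exactly as in the proof of Corollary~\ref{pointmasscaseCopy}, $T^*=L+(Lb)\otimes b$ and $\Delta=T^*T-I=(1+\|b\|^2)(Lb)\otimes(Lb)$, which is $\ge0$ of rank exactly one. Hence $T$ satisfies (A1)--(A4); being moreover an $n$-isometry, $(M_z,\HH(b))$ is unitarily equivalent — and, as both are normalised shifts on reproducing kernel Hilbert spaces of analytic functions on $\D$, equal — to $(M_z,\HD_{\vec\mu})$ for a normalized allowable tuple $\vec\mu=(\tfrac{|dz|}{2\pi},\mu_1,\dots,\mu_{n-1})$~\cite{Ry19}. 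Set $f:=Lb$, $c:=1+\|b\|^2$, $\HK:=[\ran \Delta]_{T^*}=\overline{\operatorname{span}}\{T^{*l}f:l\ge0\}$ and $A:=T^*|_{\HK}$. From $\Delta^{(k)}=c\sum_{l=0}^{k-1}(-1)^{k-1-l}\binom{k-1}{l}(T^{*l}f)\otimes(T^{*l}f)$ and $\Delta^{(n)}=0$, the vectors $T^{*l}f=A^lf$ ($0\le l\le n-1$) are linearly dependent, so $\HK$ is finite dimensional (equivalently, $b$ is rational), $f$ is cyclic for $A$, and we set $m:=\dim\HK$.

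The core of the argument is the spectral structure of $A$. Inside $\HK$ the defect recursion becomes $\Delta^{(k)}|_{\HK}=c\,(\Phi-I)^{k-1}(f\otimes f)$, where $\Phi(X)=AXA^{\sharp}$ and $A^{\sharp}$ is the adjoint of $A$ with respect to the inner product of $\HK$. Because $\Delta^{(n)}=0$, the operator $f\otimes f$ lies in the generalised $1$-eigenspace of $\Phi$; writing $f=\sum_{\mu\in\sigma(A)}f_\mu$ along the generalised eigenspaces of $A$, one checks that each $f_\mu\otimes f_\nu$ is a generalised eigenvector of $\Phi$ for the eigenvalue $\mu\overline\nu$ (indeed $\Phi^k(f_\mu\otimes f_\nu)=(A^kf_\mu)\otimes(A^kf_\nu)$), while cyclicity forces every $f_\mu\ne0$; hence $\mu\overline\nu=1$ for all $\mu,\nu\in\sigma(A)$, so $\sigma(A)=\{\lambda\}$ with $\lambda\in\T$, and, being non-derogatory, $A$ is a single Jordan block $J_m(\lambda)$. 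Taking a Jordan chain basis with $f$ at the top, the coordinates of $A^lf$ involve the polynomials $l\mapsto\binom{l}{s}$, $0\le s\le m-1$, and plugging this into $(\Phi-I)^{j}(f\otimes f)=\sum_l(-1)^{j-l}\binom{j}{l}(A^lf)\otimes(A^lf)$ shows its matrix entries to be $j$-th finite differences of products $\binom{\cdot}{s}\binom{\cdot}{s'}$ with $s,s'\le m-1$; consequently $(\Phi-I)^{j}(f\otimes f)=0$ precisely when $j\ge2m-1$, while $(\Phi-I)^{2m-2}(f\otimes f)$ is a nonzero \emph{positive} rank-one operator. Therefore $\Delta^{(2m)}=0$, $\Delta^{(2m-1)}\ne0$ and $\Delta^{(2m-1)}\ge0$: $T$ is a strict $2m$-isometry, so $n=2m$ is even. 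The coordinates of $f=Lb$ in the Jordan basis adapted to the monomial model of $\HD_{\vec\mu}$ are, after normalisation, the coefficients $c_0,\dots,c_{m-1}$ of $p$, and the requirement that $A$ be a block of size exactly $m$ (i.e.\ that $f$ genuinely generate $\HK$) is precisely $p(\lambda)\ne0$.

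To arrive at~\eqref{eq:H(b)=Dvecmu} one substitutes the explicit $A$, $A^{\sharp}$ and $f$ into $\Delta^{(i)}=c\sum_l(-1)^{i-1-l}\binom{i-1}{l}(A^lf)\otimes(A^lf)$ and compares with the correspondence from~\cite{Ry19} between the defect operators of $(M_z,\HD_{\vec\mu})$ and the Fourier coefficients $\hat\mu_i$; a bookkeeping computation with binomial coefficients then produces the stated formula (and re-confirms $n=2m$). For the converse, given $m$, $\lambda\in\T$ and $p$ with $p(\lambda)\ne0$, define $\vec\mu$ by~\eqref{eq:H(b)=Dvecmu} and run the construction backwards: $(\lambda,p)$ prescribes a Jordan block $J_m(\lambda)$, a cyclic vector, and — forced by expansiveness of $M_z$ — an inner product on an $m$-dimensional space; positivity of this inner product, equivalently that $\vec\mu$ is a \emph{normalized allowable} $2m$-tuple, is the statement ``$(\Phi-I)^{2m-2}(f\otimes f)\ge0$'' read in reverse, verified from the same finite-difference formulas. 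On the resulting space $M_z$ is bounded, analytic, expansive, cyclic, has one-dimensional $\ker M_z^{*}$, and is a strict $2m$-isometry with the prescribed defect operators; hence it equals $\HD_{\vec\mu}$, and by~\cite[Theorem~4.6]{LGR} it equals some $\HH[B]$; since the defect has rank one, $B$ may be taken scalar-valued, which yields the required $b\in\HS_0(\C,\C)$.

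I expect the main obstacle to be the spectral/positivity analysis of the second paragraph — showing that the scalar (rank-one) situation forces $A$ to be a single Jordan block with eigenvalue on $\T$, and that the interplay of $\Delta^{(n)}=0$ with $\Delta^{(n-1)}\ge0$ pins the nilpotency index at $2m-1$, which is exactly the mechanism that yields the evenness $n=2m$. The secondary difficulties are the bookkeeping identification of the explicit defect operators with~\eqref{eq:H(b)=Dvecmu} and the verification of allowability in the converse; both should be handled by realising the prescribed data as an honest Hilbert-space inner product built from the Jordan model.
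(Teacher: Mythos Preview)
Your approach is genuinely different from the paper's. The paper does not analyse $A=T^*|_{\HK}$ from scratch; it simply invokes Theorem~\ref{theorem:n-isometriesOnH(b)} (i.e.\ \cite[Theorem~1.1]{LGR}) to obtain $n=2m$, $\lambda\in\T$, and the polynomial $p$ together with the norm formula $\|f\|_{\HH(b)}^2=\|f\|_{H^2}^2+D_\lambda^m(pf)$, and then uses the identity $\langle\Delta f,g\rangle=(pf)^{(m-1)}(\lambda)\overline{(pg)^{(m-1)}(\lambda)}/[(m-1)!]^2$ from \cite[Lemma~9.3]{LGR}. Plugging this into \eqref{eq:DeltaRecurssion} and the formula $\hat\mu_i(k)=\langle\Delta^{(i)}1,z^k\rangle$ from Theorem~\ref{theorem:RydheModel} gives \eqref{eq:H(b)=Dvecmu} in one line. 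For the converse, the paper constructs the space with norm $\|f\|_{H^2}^2+D_\lambda^m(pf)$, notes it is an $\HH(b)$ by Proposition~\ref{proposition:HequivalenttoHB}, and appeals to the forward direction.

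Your spectral argument that $\sigma(A)=\{\overline\lambda\}$ with $|\lambda|=1$, that $A$ is a single Jordan block of size $m$, and that the nilpotency index of $\Phi-I$ on $f\otimes f$ is exactly $2m-1$ (hence $n=2m$), is correct and is an attractive, self-contained alternative to quoting \cite[Theorem~1.1]{LGR}. What your outline does not yet deliver is the polynomial $p$. You write that the $c_j$ are ``the coordinates of $f=Lb$ in the Jordan basis adapted to the monomial model of $\HD_{\vec\mu}$'', but no such basis is defined, and in the Jordan chain you actually use (with $f$ at the top) the coordinates of $f$ are $(0,\dots,0,1)$. To recover \eqref{eq:H(b)=Dvecmu} from your data you need the concrete evaluation $\langle z^l,Lb\rangle_{\HH(b)}$; this requires identifying the generalised $T^*$-eigenvectors at $\overline\lambda$ with the kernel derivatives $\overline\partial^jK_\lambda^b$ (the remark after Lemma~\ref{lemma:MateFunctionAndPhi}), writing $Lb=\sum_j d_j\,\overline\partial^jK_\lambda^b$, computing $\langle z^l,Lb\rangle=\sum_j d_j\, l(l-1)\cdots(l-j+1)\lambda^{l-j}$, and then performing the change of polynomial basis from $\{\binom{l}{j}\}_{j=0}^{m-1}$ to $\{\binom{l+j}{m-1}\}_{j=0}^{m-1}$ to define the $c_j$. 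Only after this step can one check $p(\lambda)\ne0$ (it is equivalent to $d_{m-1}\ne0$, i.e.\ to the cyclicity of $f$) and absorb the overall constant $c\,|C|^2$. This is more than bookkeeping as stated, and without it your $p$ is not well-defined.

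Your converse is also too schematic. ``Running the construction backwards'' from an abstract Jordan block does not obviously produce a Hilbert space of analytic functions on $\D$ with the required properties; in particular, the inner product on the purported $m$-dimensional space is not ``forced by expansiveness'' but must be exhibited. The paper's route---build the concrete space $\|f\|_{H^2}^2+D_\lambda^m(pf)$, identify it as an $\HH(b)$, and feed it back through the forward direction---closes this gap cleanly and is the simplest fix for your argument as well.
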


As a final remark we point out that the notion of allowable tuples is far from fully understood. Theorem~\ref{theorem:H(b)=Dvecmu} and related results provide many new examples of allowable tuples.

The paper is organized as follows: In Section~\ref{section:Preliminaries} we introduce some notation and preliminary results. In Section~\ref{section:DirichletTypeH[B]Shifts} we recall some properties of $\HD(\mu)$ and use these to study Dirichlet-type shifts on $\HH[B]$-spaces. We will prove Theorems \ref{theorem:HBDmuequal} and \ref{dirichlettype} in this section. We also outline a method for calculating the reproducing kernel of $\HD(\mu)$ for finitely atomic $\mu$. In Section~\ref{section:n-isometricH[B]Shifts} we recall some properties of $\HD_{\vec{\mu}}$ and use these to study expansive $n$-isometric shifts in $\HH[B]$-spaces with finite rank. We will prove Theorem \ref{theorem:H(b)=Dvecmu} and then answer some questions left open in \cite{Ry19} in this section.

\section{Preliminaries}\label{section:Preliminaries}

\subsection{Notation and terminology} We will use the basic notation $\D=\{z\in\C\mid |z|<1\}$, $\T=\{z\in\C\mid |z|=1\}$, $\overline{\D}=\D\cup\T$, and $\Z_{\ge x}=\Z\cap[x,\infty)$. Integration with respect to area measure on $\D$ and arc length measure on $\T$ are denoted by $dA$ and $|dz|$ respectively. We use $\delta_\lambda$ to denote the Dirac measure for $\lambda\in \C$. The standard Kronecker delta is given by $\delta_{a,b}=\delta_a(\{b\})$.

Given a set $E\subseteq\HH$, we let $\bigvee E$ denote the closed linear span of $E$, and $[E]_T$ the smallest closed $T$-invariant subspace containing $E$. For singletons $E=\{e\}$, we write $[e]_T$ rather than $[E]_T$. If $[e]_T=\HH$ for some vector $e\in\HH$, then we say that $T$ is \emph{cyclic} and that $e$ is a \emph{cyclic vector} for $T$.

Given an analytic function $f\colon \D\to\C$, we let $f^{(n)}$ denote the $n$th derivative of $f$. We also define the corresponding forward shift
\begin{align*}
	M_z f\colon z\mapsto zf(z),
\end{align*}
and backward shift
\begin{align*}
	Lf\colon z\mapsto \frac{f(z)-f(0)}{z}.
\end{align*}
We write $(M_z,\mathcal{X})$ or $(L,\mathcal{X})$ to denote the restriction of these operators to some space $\mathcal{X}$ of analytic functions. We sometimes use the same notation to denote the corresponding operations acting on vector-valued functions.

Given a set $X$, and two functions $f,g\colon X\to [0,\infty]$, we write $f(x)\lesssim g(x)$ to indicate the existence of $C\in[0,\infty)$ such that $f(x)\le C g(x)$ whenever $x\in X$ and $g(x)<\infty$. If $f(x)\lesssim g(x)$ and $g(x)\lesssim f(x)$, then we write $f(x)\approx g(x)$.

\subsection{The Schur class $\HS(\ell^2,\C)$} Let $B\colon\D\to \ell^2$ be analytic. If $\|B(z)\|_{\ell^2}\le 1$ for each $z\in\D$, then we say that $B$ is a \emph{Schur function}. The class of Schur functions is denoted $\HS(\ell^2,\C)$. We will be especially interested in Schur functions for which $B(0)=0$. This condition corresponds to a type of normalization. The class of normalized Schur functions is denoted $\HS_0(\ell^2,\C)$.

We equip $\HS(\ell^2,\C)$ with the equivalence relation $\sim$ given by $B_1\sim B_2$ if and only if $\La B_1(z),B_1(\lambda)\Ra_{\ell^2}=\La B_2(z),B_2(\lambda)\Ra_{\ell^2}$ for all $z,\lambda\in\D$. If the equivalence class $[B]$ has a representative $(b_k)_{k=1}^\infty$ where only finitely many $b_k$ are non-zero, then we say that $B$ has \emph{finite rank}. If $B$ has finite rank and $n$ is the minimal number of non-zero $b_k$ among all representations of $[B]$, then we say that $\rank B = n$. If $B$ does not have finite rank, then we say that $\rank B = \infty$. If $B\in\HS(\ell^2,\C)$ and $\rank B = n <\infty$, then we sometimes write $B\in\HS(\C^n,\C)$, and allow ourselves the somewhat sloppy notation $B=(b_1,\ldots,b_n)$. Note that the particular case $\HS(\C,\C)$ is just the unit ball of the Hardy algebra $H^\infty$.

Let $H^2$ be the Hardy space. Let $B\in\HS(\ell^2,\C)$. Suppose that $a\in H^2$ satisfies $|a(z)|^2+\|B(z)\|_{\ell^2}^2=1$ for $|dz|$-a.e. $z\in\T$. Such an $a$ is called a \emph{mate} of $B$. The existence of a mate is equivalent to the condition
\begin{align}\label{eq:MateExistenceCondition}
	\int_{\T} \log\left(1-\|B(z)\|_{\ell^2}^2\right)\,\frac{|dz|}{2\pi}>-\infty,
\end{align}
see for instance \cite[Chapter~II.4]{Gar}. In fact, the above condition guarantees the existence of an \emph{outer} mate function $a$. Such functions are unique up to multiplicative factors $c\in\T$. In particular, if $a$ is a mate of $B$, if $a$ is outer, and if $a(0)>0$, then $a$ is uniquely determined, and we call $a$ \emph{the mate} of $B$. In the case of rank one Schur functions, $B\in\HS(\C,\C)$ has a mate if and only if $B$ is a non-extreme point in the unit ball of $H^\infty$.

Lastly, suppose $B\in\HS(\C^n,\C)$, and $B=\frac{1}{q}(p_1,\ldots,p_n)$ for some polynomials $p_1,\ldots,p_n,q$. We then say that $B$ is \emph{rational}. For such a representation of $B$, define
\begin{align*}
	N:=\max_{p\in\{p_1,\ldots,p_n,q\}}\deg p.
\end{align*}
If $N$ is minimal over all such representations, then we say that $B$ has \emph{degree $N$}, $\deg B=N$. If $B$ is not rational, then we say that $\deg B = \infty$.

\subsection{de~Branges--Rovnyak spaces $\HH[B]$} For $B\in\HS_0(\ell^2,\C)$ we define
\begin{align*}
	K^B_w(z)=\frac{1-\La B(z),B(w)\Ra_{\ell^2}}{1-z\overline w},\quad z,w\in\D.
\end{align*}
The function $K^B\colon (z,w)\mapsto K^B_w(z)$ is positive definite, so $K^B$ is the reproducing kernel of a reproducing kernel Hilbert space (RKHS). The RKHS is uniquely determined by $K^B$. However, $K^B$ does not determine $B$, but rather the equivalence class $[B]$. To emphasize this, the RKHS with reproducing kernel $K^B$ is denoted $\HH[B]$. The exception is when $B\sim (b,0,0,\ldots)$, i.e. $\rank B = 1$, in which we write $\HH(b)$. We call $\HH[B]$ the de~Branges--Rovnyak space.

$\HH[B]$ spaces have a rich history originating from~\cite{dBR65}. We refer to \cite{BallBoloBasics, dBR66, FM16, FM162, Sa94} for more general background. A characteristic property of the spaces $\HH[B]$ is that they are RKHS in which the backward shift is a contraction. We state a special case of \cite[Proposition~2.1]{AM19}.

\begin{proposition}[\cite{AM19}]\label{proposition:HisHB}
	Let $\HH$ be a RKHS. The following are equivalent.
	\begin{enumerate}
		\item There exists $B\in\HS_0(\ell^2,\C)$ such that $K^B$ is the reproducing kernel for $\HH$.
		\item $\HH$ is closed under the backward shift $L$, and $(L,\HH)$ is a contraction. Moreover, the constant function $1\in\HH$, and $f\in\HH$ implies $f(0)=\La f,1\Ra_\HH$.
	\end{enumerate}
\end{proposition}

We henceforth use $\HH[B]$ as generic notation for any RKHS satisfying the equivalent conditions of Proposition~\ref{proposition:HisHB}. Indeed, since a RKHS is uniquely determined by its reproducing kernel, any RKHS satisfying the above conditions is \emph{equal} to $\HH[B]$ for some $B\in\HS_0(\ell^2,\C)$.

The conditions in Proposition~\ref{proposition:HisHB} do not guarantee that $\HH[B]$ is invariant under $M_z$. If on the other hand $M_z\HH[B]\subseteq\HH[B]$, then $(M_z,\HH[B])$ is bounded by the closed graph theorem. If $\rank B<\infty$, then a result by Aleman and Malman \cite[Theorem~5.2]{AM19} states that $M_z\HH[B]\subseteq\HH[B]$ if and only if \eqref{eq:MateExistenceCondition} holds. Another formulation of this is that $M_z$ defines a bounded operator on $\HH[B]$ if and only if $B$ has a mate function.

Assume now that $M_z\HH[B]\subseteq\HH[B]$. Since $(L,\HH[B])$ is a contraction,
\begin{align*}
	\|f\|_{\HH[B]}^2=\|LM_zf\|_{\HH[B]}^2\le\|M_zf\|_{\HH[B]}^2,
\end{align*}
so $(M_z,\HH[B])$ is expansive. The following result by Gu, Richter, and the first author~\cite{LGR} puts $\HH[B]$-spaces in the context of general bounded analytic expansive operators.
\begin{proposition}[\cite{LGR}]\label{proposition:HequivalenttoHB}
	Let $T\in\HB(\HH)$ with $\dim\ker T^*=1$. The following are equivalent.
	\begin{enumerate}
		\item $T$ is analytic and expansive.
		\item There exists $B\in\HS_0(\ell^2,\C)$ such that $M_z\HH[B]\subseteq\HH[B]$ and $T$ is unitarily equivalent to $(M_z,\HH[B])$.
	\end{enumerate}
\end{proposition}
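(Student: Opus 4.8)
The plan is to prove the two implications separately, with $(ii)\Rightarrow(i)$ being essentially a recapitulation of observations already made in the excerpt, and $(i)\Rightarrow(ii)$ being the substantial direction that requires an explicit construction of the symbol $B$.

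For $(ii)\Rightarrow(i)$: assume $T$ is unitarily equivalent to $(M_z,\HH[B])$ with $M_z\HH[B]\subseteq\HH[B]$. The paragraph immediately preceding the statement already records that $(L,\HH[B])$ is a contraction (by Proposition~\ref{proposition:HisHB}) and hence $\|f\|^2 = \|LM_zf\|^2 \le \|M_zf\|^2$, so $M_z$ is expansive; unitary equivalence transfers this to $T$. Analyticity is equally quick: $\bigcap_n M_z^n\HH[B] \subseteq \bigcap_n z^n\Hol(\D) = \{0\}$ since every nonzero element of a RKHS of analytic functions on $\D$ is a genuine analytic function with a finite-order zero at $0$. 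So $T$ is analytic and expansive, and of course $\dim\ker T^* = 1$ is preserved under unitary equivalence — though one should note this is part of the standing hypothesis on $T$, not something to be re-derived.

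For $(i)\Rightarrow(ii)$, the hard part: given that $T\in\HB(\HH)$ is analytic, expansive, and $\dim\ker T^* = 1$, I would first fix a unit vector $e\in\ker T^*$ and show (using expansiveness together with analyticity) that $\{T^n e\}_{n\ge 0}$ is already a complete set, i.e. $T$ is cyclic with respect to $e$ — this is Wold-type decomposition reasoning: expansiveness makes $\{T^n e\}$ a basis-like system and analyticity kills the "unitary part'' of the would-be decomposition, so $\HH = \bigvee_n T^n e$. Having this, define the map $U\colon\HH\to\Hol(\D)$ by $(Ux)(z) = \sum_{n\ge 0}\langle x, T^n e\rangle \overline{?}$ — more precisely, transport $\HH$ to a space of analytic functions by declaring $Ux$ to be the function whose Taylor coefficients encode the coordinates of $x$ against the (non-orthogonal) system $\{T^n e\}$, chosen so that $U T = M_z U$ and $Ue = 1$. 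One then sets $\HH[B] := U\HH$ with the transported norm; by construction $M_z$ is bounded on it and $M_z^* $ has one-dimensional kernel spanned by (the image of) $e$, and $1\in\HH[B]$ with $\langle f,1\rangle = f(0)$. The remaining point is to verify that $\HH[B]$ satisfies condition (ii) of Proposition~\ref{proposition:HisHB}, i.e. that the backward shift $L = U T^{-1}|_{\ran T}U^{-1}$ extended appropriately is a contraction on it — and this is exactly where expansiveness of $T$ is used, since $\|Lf\| \le \|f\|$ unwinds to $\|x\| \le \|Tx\|$ after identifying $f = UM_zx$. Then Proposition~\ref{proposition:HisHB} produces $B\in\HS_0(\ell^2,\C)$ with $K^B$ the kernel of $\HH[B] = U\HH$, and $U$ is the desired unitary equivalence between $T$ and $(M_z,\HH[B])$.

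The main obstacle is the construction of the functional model $U$ and the proof that it is a well-defined isometry onto a RKHS — in particular, checking that the transported inner product on $\Hol(\D)$ genuinely comes from a reproducing kernel (equivalently, that point evaluations are bounded) and that $L$ is a contraction there. In the write-up I would lean on \cite[Theorem~4.6]{LGR} (already cited in the introduction as giving precisely the equivalence of (A1)--(A3) with the $(M_z,\HH[B])$ model) to supply this model-theoretic input, so that the proof here reduces to: invoke that theorem for the model, then observe that $M_z\HH[B]\subseteq\HH[B]$ is automatic because $T$ is everywhere-defined on $\HH$, and close the loop with the expansiveness computation $\|f\| = \|LM_zf\|\le\|M_zf\|$ for the converse direction. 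Thus the genuinely new content of this proposition, beyond \cite{LGR}, is merely the explicit identification of "expansive'' with "$M_z$-invariance of $\HH[B]$'', which is the short paragraph-long argument sketched above.
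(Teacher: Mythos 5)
The paper itself offers no proof of this proposition: it is stated verbatim as a special case of \cite[Theorem~4.6]{LGR}, with only the remark that the restriction $\dim\ker T^*=1$ could be removed by passing to vector-valued $\HH[B]$. Your fallback position --- cite that theorem for the functional model, note that $M_z$-invariance of $\HH[B]$ is automatic once $T$ is everywhere defined, and run the computation $\|f\|=\|LM_zf\|\le\|M_zf\|$ for the converse --- therefore coincides with what the paper actually does, and your $(ii)\Rightarrow(i)$ argument (contractivity of $L$ from Proposition~\ref{proposition:HisHB}, analyticity from the infinite-order zero at the origin) is correct and is exactly the computation displayed just before the proposition.

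However, your standalone sketch of $(i)\Rightarrow(ii)$ contains a genuine error. You claim that expansiveness and analyticity force $\HH=\bigvee_{n\ge 0}T^ne$ for $e\in\ker T^*$, i.e.\ that (A4) follows from (A1)--(A3). It does not: the paper lists (A4) as a separate standing hypothesis precisely because it fails in general (it remarks that for $n$-isometries with $n\ge 3$, (A1)--(A3) do not imply (A4)), and an $M_z$-invariant $\HH[B]$ need not have the polynomials dense. The Wold-type reasoning you invoke applies to isometries, not to expansive operators. The correct functional model (Shimorin's construction, which is what underlies \cite[Theorem~4.6]{LGR}) is built not on the orbit of $T$ but on the orbit of the Cauchy dual $T'=T(T^*T)^{-1}$: one sets $(Ux)(z)=\sum_{n\ge 0}\langle x,T'^ne\rangle z^n$, checks $T^*T'=I$ so that $UT=M_zU$, and uses analyticity of $T$ to get $\HH=\bigvee_n T'^ne$, hence injectivity of $U$ --- no cyclicity of $T$ is needed or available. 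Expansiveness then enters exactly where you say it does, namely in showing that the backward shift on $U\HH$ is a contraction so that Proposition~\ref{proposition:HisHB} applies. So your overall architecture is right, but the specific orbit $\{T^ne\}$ must be replaced by $\{T'^ne\}$, and the cyclicity claim must be dropped.
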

	Similar to Proposition~\ref{proposition:HisHB}, we have stated a special case of a more general result, in this case \cite[Theorem~4.6]{LGR}. The restriction $\dim\ker T^*=1$ can be removed by considering vector-valued analogues of $\HH[B]$.

\section{Dirichlet-type shifts on finite rank $\HH[B]$-spaces.}\label{section:DirichletTypeH[B]Shifts}



\subsection{Weighted Dirichlet spaces $\HD(\mu)$} Let $f\in H^2$ and $\lambda\in\overline{\D}$. If $f(\lambda)$ exists (in the sense of a radial limit), then we define the corresponding \emph{local Dirichlet integral} by
\begin{align*}
	D_\lambda(f)=\int_{z\in\T}\left|\frac{f(z)-f(\lambda)}{z-\lambda}\right|^2\frac{|dz|}{2\pi}.
\end{align*}
Guided by for instance \cite[Section~7.1, and Theorem~7.2.5]{EKMR14}, we adopt the convention that $D_\lambda(f)=\infty$ if $f(\lambda)$ does not exist.

Let $\mu$ be a  finite positive Borel measure on $\overline{\D}$. We define the Hilbert space
\begin{align*}
	\HD(\mu)=\{f\in H^2\mid \int_{\lambda\in\overline{\D}}D_\lambda(f)\, d\mu(\lambda)<\infty\},
\end{align*}
with norm given by
\begin{align*}
	\|f\|_{\HD(\mu)}^2=\|f\|_{H^2}^2+\int_{\lambda\in\overline{\D}}D_\lambda(f)\, d\mu(\lambda).
\end{align*}
The measure $\mu$ can be uniquely identified with the positive superharmonic function
\begin{align*}
	U_\mu\colon z \mapsto \int_{\D}\log\left|\frac{1-\zeta\bar z}{\zeta-z}\right|^2\, \frac{d\mu(\zeta)}{1-|\zeta|^2}
	+
	\int_{\T}\frac{1-|z|^2}{|\zeta-z|^2}\, d\mu(\zeta),\quad z\in\D,
\end{align*}
for which it holds that
\begin{align*}
	\int_{\lambda\in\overline{\D}}D_\lambda(f)\, d\mu(\lambda)=\int_\D |f'(z)|^2U_\mu(z)\, dA(z).
\end{align*}
Moreover, any positive superharmonic function may be obtained in this way. Hence the term \emph{weighted Dirichlet space}.

The special case where $\mu=\delta_\lambda$ is called a \emph{local Dirichlet space}, and is denoted by $\HD_\lambda^1$. For measures supported on $\T$, the space $\HD(\mu)$ space was introduced in \cite{Ri91}. The general case was thoroughly investigated in \cite{Al93}. The theory of $\HD(\mu)$ spaces has received a lot of attention in the recent years, and we refer to \cite{Al93, ARSW19, EKMR14} for more background on the subject.


The fact that $(M_z,\HD(\mu))$ is of Dirichlet type relies in part on the useful identity
\begin{align}\label{eq:DirichletMzIdentity}
	\|M_zf\|_{\HD(\mu)}^2=\|f\|_{\HD(\mu)}^2 + \int_{\overline{\D}}|f(z)|^2\, d\mu(z),
\end{align}
see for example \cite[(5.2)]{LR15}.


Since $H^2$ is a RKHS and $\HD(\mu)$ is contractively contained in $H^2$, $\HD(\mu)$ is also a RKHS. Even though this does not yield an explicit formula for the kernel, it is plain that any $f\in\HD(\mu)$ satisfies
\begin{align*}
	\La f,1\Ra_{\HD(\mu)}=\La f,1\Ra_{H^2}=f(0).
\end{align*}
By Proposition~\ref{proposition:HisHB}, we conclude that there exists $B\in\HS_0(\ell^2,\C)$ such that $\HD(\mu)=\HH[B]$.

\subsection{The finite rank case}

The first main result of this section is Theorem \ref{theorem:HBDmuequal} which is about the relation between $\HH[B]$ with finite rank and $\HD(\mu)$ for finitely atomic measures.
As preparation for the proof of this theorem, we begin with a simple necessary condition for when an $\HH[B]$-space is also a $\HD(\mu)$-space.

\begin{lemma}\label{proposition:FiniteRankDmuIsRational}
Let $B\in\HS_0(\ell^2,\C)$, and suppose that $\HH[B]=\HD(\mu)$ for some $\mu$. Then $\rank B = \rank \Delta = \deg B$.
\end{lemma}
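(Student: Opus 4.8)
The plan is to connect the three quantities $\rank B$, $\rank \Delta$, and $\deg B$ by a chain of equalities, exploiting the special structure that comes from $\HH[B] = \HD(\mu)$, namely the Dirichlet-type identity \eqref{eq:DirichletMzIdentity}. First I would use \eqref{eq:DirichletMzIdentity} to identify the defect operator $\Delta = T^*T - I$ explicitly: for $f \in \HD(\mu)$ one has $\La \Delta f, f\Ra = \|M_z f\|^2_{\HD(\mu)} - \|f\|^2_{\HD(\mu)} = \int_{\overline{\D}} |f(z)|^2 \, d\mu(z)$. So $\Delta$ is the (positive) Toeplitz-type operator on $\HD(\mu)$ given by integration against $\mu$, and in particular $\La \Delta f, g \Ra_{\HD(\mu)} = \int_{\overline{\D}} f(z)\overline{g(z)} \, d\mu(z)$. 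Restricting to the case at hand, $\rank \Delta$ is finite if and only if the set of point evaluations $\{f \mapsto f(\lambda)\}$ relevant to $\mu$ spans a finite-dimensional space, which ties $\rank \Delta$ to the support of $\mu$; but for this lemma I only need the bookkeeping identity, not the atomic structure (that is Theorem~\ref{theorem:HBDmuequal}).

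Next I would relate $\rank \Delta$ to $\rank B$. Since $\HD(\mu) = \HH[B]$ and $M_z$ is bounded and expansive, Proposition~\ref{proposition:HequivalenttoHB} applies; the standard computation of the defect operator of $(M_z, \HH[B])$ expresses $\Delta$ in terms of the coordinate functions $b_k$ of $B$ — concretely, using the adjoint formula $M_z^* = L + (\text{something involving } B)$, one gets that $\ran \Delta$ is spanned by $\{L b_k : k\}$ together with one extra vector, so $\rank \Delta$ and $\rank B$ differ by at most an additive constant and, more carefully, are equal (this is essentially the content of the formula $\Delta = \sum c_i K^B_{\lambda_i} \otimes K^B_{\lambda_i}$ appearing in Theorem~\ref{dirichlettype}, read in reverse). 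The cleanest route is: $\rank B < \infty \iff \rank \Delta < \infty$ from the $\HH[B]$ side, and when finite the two ranks coincide by comparing dimensions of $\bigvee\{b_k\}$ against $\ran \Delta$. Then $\rank B = \rank \Delta = \deg B$ reduces to showing $\rank B = \deg B$ for the $B$ in question, i.e. that a finite-rank $B$ arising this way is automatically rational of degree equal to its rank.

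The main obstacle is precisely this last equality $\rank B = \deg B$: a priori a finite-rank Schur function could be rational of degree strictly larger than its rank, or even non-rational. The way around it is to go back to $\Delta$ and observe that $\ran \Delta$ is invariant under $T^*$ (because $\int |f|^2 d\mu$ defines a form that interacts well with $M_z$ via \eqref{eq:DirichletMzIdentity}), so $[\ran \Delta]_{T^*} = \ran \Delta$ is a finite-dimensional $T^*$-invariant subspace; decomposing $T^*$ on it and using cyclicity (A4) together with the reproducing-kernel structure forces $\ran \Delta$ to be spanned by finitely many reproducing kernels $K^B_{\lambda_j}$, from which one reads off both that $B$ is rational and that $\deg B$ equals the number of kernels, which is $\rank \Delta = \rank B$. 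I would lean on the remark after Lemma~\ref{lemma:MateFunctionAndPhi} for the identification of eigenvectors of $T^*$ with reproducing kernels, and on the standard fact that $\HH[B]$ with rational $B$ of degree $N$ has $N$-dimensional defect. Assembling these gives the three-way equality; the bulk of the genuine work is the spectral-theoretic unpacking of the finite-dimensional $T^*$-invariant subspace $\ran \Delta$.
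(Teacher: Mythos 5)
Your overall skeleton matches the paper's: reduce everything to the identity $\rank B=\rank\Delta$ (a known fact about $\HH[B]$, cited rather than reproved), and then show $\rank\Delta=\deg B$ by exploiting the $T^*$-invariance of $\overline{\ran\Delta}$, which is where the hypothesis $\HH[B]=\HD(\mu)$ enters. However, two points need repair. First, you invoke ``the standard fact that $\HH[B]$ with rational $B$ of degree $N$ has $N$-dimensional defect.'' If by ``defect'' you mean $\rank\Delta$, this is false in general (e.g.\ a rank-one $b$ can be rational of degree $2$); the correct general statement, which is what the paper cites from \cite[Theorem~1.2]{LGR}, is $\deg B=\dim[\ran\Delta]_{T^*}$. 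Conflating $\rank\Delta$ with $\dim[\ran\Delta]_{T^*}$ would make the lemma trivially true for every rational $B$, which it is not; the entire content of the lemma is that these two dimensions coincide \emph{because} of the $\HD(\mu)$-structure.

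Second, the crucial step --- that $\overline{\ran\Delta}$ is $T^*$-invariant, so that $[\ran\Delta]_{T^*}=\overline{\ran\Delta}$ --- is asserted with only the phrase that the form $\int|f|^2\,d\mu$ ``interacts well with $M_z$.'' The actual argument is short but should be supplied: by \eqref{eq:DirichletMzIdentity} applied twice, $\La\Delta Tf,Tf\Ra=\int_{\overline{\D}}|zf(z)|^2\,d\mu(z)\le\int_{\overline{\D}}|f(z)|^2\,d\mu(z)=\La\Delta f,f\Ra$ since $|z|\le 1$ on $\overline{\D}$, i.e.\ $T^*\Delta T-\Delta\le 0$; combined with $\Delta\ge 0$ this gives $T\ker\Delta\subseteq\ker\Delta$, hence $T^*\overline{\ran\Delta}\subseteq\overline{\ran\Delta}$. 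With that in place, $\deg B=\dim[\ran\Delta]_{T^*}=\dim\overline{\ran\Delta}=\rank\Delta=\rank B$ (the infinite-rank case being trivial since $\rank B=\infty$ forces $\deg B=\infty$ by definition). Your further detour --- decomposing $T^*$ on $\ran\Delta$ and claiming it is spanned by reproducing kernels $K^B_{\lambda_j}$ --- is not needed for this lemma and is not quite accurate as stated, since eigenvalues of $T^*|_{\ran\Delta}$ may occur with multiplicity, in which case derivatives of kernels appear (cf.\ the remark after Lemma~\ref{lemma:MateFunctionAndPhi}); that analysis belongs to Theorem~\ref{theorem:HBDmuequal}, not here.
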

\begin{proof}
It is known that for an $\HH[B]$ space we have $\rank\Delta=\rank B$, see e.g. \cite[Theorem 4.6 and Lemma 5.1]{LGR}. If $\rank B = \infty$, then $\deg B = \infty$ by definition. It remains to prove that $\deg B=\rank B$ whenever the latter is finite.

Assuming $B=(b_j)_{1\le j \le n}$, $M_z$ is bounded on $\HH[B]$ if and only if \eqref{eq:MateExistenceCondition} holds, see \cite[Theorem~5.2]{AM19}. Since $M_z$ is bounded on $\HD(\mu)$, we conclude from \cite[Theorem~1.2]{LGR} that $\deg B = \dim[\ran\Delta]_{T^*}$. To finish the proof it is now sufficient to show that $[\ran\Delta]_{T^*}=\overline{\ran\Delta}$. The fact that $\overline{\ran\Delta}$ is $T^*$-invariant is implied by the inequality
\begin{align*}
	T^{*2}T^2-2T^*T +I = T^*\Delta T - \Delta \leq 0,
\end{align*}
which is immediate from \eqref{eq:DirichletMzIdentity}.
\end{proof}

The following lemma can be derived from the proof of \cite[Theorem 7.2]{LGR}. This lemma plays an important role for our investigation of the relation between $\mu$ and $B$, we include a proof here for completeness.
\begin{lemma}\label{lemma:MateFunctionAndPhi}
Let $B=(b_1,\dots,b_k)$ be a rational Schur function of degree $n$ such that $(M_z,\HH[B])$ is a bounded operator and $B(0)=0$. Let $T=(M_z,\HH[B])$ and $\HN=[\ran \Delta]_{T^*}$. Then $\HN$ is $L$-invariant and $\dim \HN = n$. Let $\HM = \HH[B] \ominus \HN$, $\varphi \in \HM \ominus z \HM, \|\varphi\| = 1$. Let $a$ be the mate of $B$.

Suppose $\prod_{i=1}^n(z-\alpha_i)$ is the characteristic polynomial of $L|\HN$, and $\prod_{i=1}^n (z-\overline{\lambda_i})$ is the characteristic polynomial of $T^*|\HN$. Then
\begin{itemize}
\item[(i)]
$$a(z) = a(0)\frac{\prod_{i=1}^n (1-\overline{\lambda_i}z)}{\prod_{i=1}^n(1-\alpha_i z)},$$
\item[(ii)] $\HH[B] = \varphi H^2 \oplus \HN$, $ |\varphi(z)| = |a(z)|$ on $\T$ and
$$\varphi(z) = e^{it} a(0)\frac{\prod_{i=1}^n (z-\lambda_i)}{\prod_{i=1}^n(1-\alpha_i z)},t\in \R.$$
\end{itemize}
\end{lemma}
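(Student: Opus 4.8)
The plan is to first establish the orthogonal decomposition $\HH[B]=\varphi H^2\oplus\HN$ together with all the asserted properties of $\HN$ — in essence the content of the proof of \cite[Theorem~7.2]{LGR} — and then to identify $a$ and $\varphi$ explicitly by playing off the action of $L$ on $\HN$ against that of $T^\ast$ on $\HN$. For the structural part, set $C:=T^\ast-L$. Since $LM_z=I$ we get $T^\ast T=I+CM_z$, so $\Delta=CM_z$; moreover $\ker T^\ast=\C 1$ (because $B(0)=0$), hence $\HH[B]=M_z\HH[B]\oplus\C 1$, and $C1=T^\ast1-L1=0$. Therefore $\ran C=C\bigl(M_z\HH[B]\bigr)=\ran\Delta$, so $\ran C\subseteq\HN=[\ran\Delta]_{T^\ast}$; as $\HN$ is $T^\ast$-invariant by construction, $L\HN=T^\ast\HN-C\HN\subseteq\HN$, which is the $L$-invariance of $\HN$. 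That $\dim\HN=\deg B=n$ is \cite[Theorem~1.2]{LGR}, exactly as used in the proof of Lemma~\ref{proposition:FiniteRankDmuIsRational}. For the decomposition, put $\HM:=\HH[B]\ominus\HN$: it is $M_z$-invariant, and since $\Delta=\Delta^\ast$ with $\ran\Delta\subseteq\HN$, the operator $\Delta$ annihilates $\HM$, so $(M_z,\HM)$ is an analytic isometry, hence a unilateral shift. Computing $\dim\bigl(\HH[B]\ominus z\HM\bigr)$ in two ways — from $\HH[B]=(\HM\ominus z\HM)\oplus z\HM\oplus\HN$ and from $\HH[B]=\C 1\oplus M_z\HH[B]$ with $M_z\HH[B]$ an algebraic direct sum of $M_z\HM$ and $M_z\HN$ — forces $\dim(\HM\ominus z\HM)=1$. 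Thus for any unit vector $\varphi\in\HM\ominus z\HM$ we have $\HM=\varphi H^2$ isometrically and $\HH[B]=\varphi H^2\oplus\HN$.

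Next I would show $|\varphi|=|a|$ on $\T$. Since $(z,w)\mapsto\frac{1}{1-z\bar w}-K^B_w(z)=\frac{\La B(z),B(w)\Ra}{1-z\bar w}$ is positive definite, $\HH[B]\subseteq H^2$ contractively, and then $\HM=\varphi H^2$ being isometric forces $\varphi$ to be a contractive multiplier of $H^2$, so $\varphi\in H^\infty$. The reproducing kernel of $\varphi H^2$ inside $\HH[B]$ at $w$ is $\varphi(z)\overline{\varphi(w)}/(1-z\bar w)$, hence that of $\HN$ is $K^B_w(z)-\varphi(z)\overline{\varphi(w)}/(1-z\bar w)$; writing this as $\sum_{j=1}^n\overline{e_j(w)}e_j(z)$ for an orthonormal basis $(e_j)$ of $\HN\subseteq H^2$ and clearing the denominator gives
\[
1-\La B(z),B(w)\Ra-\varphi(z)\overline{\varphi(w)}=(1-z\bar w)\sum_{j=1}^n\overline{e_j(w)}e_j(z),\qquad z,w\in\D.
\]
Letting $w=z=r\zeta\to\zeta$ radially at a point $\zeta\in\T$ of nontangential convergence of $B$, $\varphi$ and the $e_j$, the right-hand side tends to $0$, so $|\varphi|^2=1-\|B\|_{\ell^2}^2=|a|^2$ a.e.\ on $\T$.

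Finally, for the explicit formulas: under the identification $\varphi g\mapsto g$ of $\HM$ with $H^2$, the compression $P_\HM T^\ast\varphi$ corresponds to $L1=0$, so $T^\ast\varphi\in\HN$; combined with $C\varphi\in\ran C\subseteq\HN$ this gives $L\varphi=T^\ast\varphi-C\varphi\in\HN$, hence $L^k\varphi\in\HN$ for all $k\ge1$. If $\prod_{i=1}^n(z-\alpha_i)$ is the characteristic polynomial of $L|\HN$, then $\prod_i(L-\alpha_i)(L\varphi)=0$; the null space of this operator consists of rational functions with poles only among $\{\alpha_i^{-1}\}$ (and $|\alpha_i|<1$ since $L$ is a contraction), so $L\varphi=P_0(z)/\prod_i(1-\alpha_iz)$ with $\deg P_0<n$, whence $\varphi(z)=zL\varphi(z)+\varphi(0)=P_1(z)/\prod_i(1-\alpha_iz)$ with $\deg P_1\le n$. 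On the other hand, for $w\in\D$ a Fredholm-index argument — $M_z-w$ is bounded below with $\ind(M_z-w)=\ind(M_z)=-\dim\ker T^\ast=-1$ — shows $\ker(T^\ast-\bar w)=\C K^B_w$; since $\overline{\lambda_1},\dots,\overline{\lambda_n}$ are the eigenvalues of $T^\ast|\HN$, each $K^B_{\lambda_i}\in\HN$, and then $P_\HM K^B_{\lambda_i}=0$ forces $\varphi(\lambda_i)=0$. Hence $\prod_i(z-\lambda_i)\mid P_1$, and the degree bound gives $\varphi(z)=c\,\prod_i(z-\lambda_i)/\prod_i(1-\alpha_iz)$ for a constant $c$. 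Integrating $\log|\varphi|$ over $\T$ against $|dz|/2\pi$ returns $\log|c|$ from this formula and equals $\log a(0)$ since $|\varphi|=|a|$ on $\T$ with $a$ outer; thus $|c|=a(0)$, which is (ii). Factoring $\varphi=\varphi_{\mathrm{in}}\varphi_{\mathrm{out}}$ into inner and outer parts, $\varphi_{\mathrm{out}}$ is a unimodular multiple of $a$ and $\varphi_{\mathrm{in}}$ a unimodular multiple of $\prod_i\frac{z-\lambda_i}{1-\overline{\lambda_i}z}$; dividing then gives (i).

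The step I expect to be most delicate is the last one in the degenerate cases: when $L|\HN$ or $T^\ast|\HN$ has repeated eigenvalues, or when some $\lambda_i$ lies on $\T$, the phrases ``the eigenvector $K^B_{\lambda_i}$'' and ``the zero of $\varphi$ at $\lambda_i$'' must be interpreted through generalized eigenvectors (Jordan chains) and through boundary values, with all degree counts done with multiplicity; these complications are handled by the Jordan-form and limiting arguments already present in the proof of \cite[Theorem~7.2]{LGR}.
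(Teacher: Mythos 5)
Your overall architecture matches the paper's: establish $\HH[B]=\varphi H^2\oplus\HN$ with $\dim(\HM\ominus z\HM)=1$, identify $|\varphi|=|a|$ on $\T$ via the kernel decomposition and a boundary limit, show $\varphi$ is rational with denominator $\prod_i(1-\alpha_iz)$ and numerator of degree $\le n$, and then pin down the numerator as $c\prod_i(z-\lambda_i)$. Two of your tactical choices are genuinely different and work: the direct verification that $\HN$ is $L$-invariant via $C=T^\ast-L$ with $\ran C=\ran\Delta$ (the paper instead cites \cite[Lemma~6.1, Theorem~6.2]{LGR}), and the derivation of the rational form of $\varphi$ from $L\varphi\in\HN$ plus the Cayley--Hamilton theorem for $L|\HN$ (the paper uses the block decomposition of $T$, the positivity of $\Delta$ to get $(T|\HM)^\ast C=0$ with $C=(I-P_{\HN})M_zP_{\HN}$, and $\rank C=1$). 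Both are fine and arguably more self-contained.

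The genuine gap is in your last step. You locate the zeros of $\varphi$ by arguing that each $\overline{\lambda_i}$ is an eigenvalue of $T^\ast$ with eigenvector $K^B_{\lambda_i}$, so that $P_{\HM}K^B_{\lambda_i}=0$ forces $\varphi(\lambda_i)=0$. As you yourself note, this only runs when the $\lambda_i$ are distinct and lie in $\D$: your Fredholm argument for $\ker(T^\ast-\bar w)=\C K^B_w$ uses the invertibility of $I-wL$, which fails for $|w|=1$, and repeated $\lambda_i$ require Jordan chains and derivatives of kernels. These are not peripheral cases here --- in the paper's applications (Theorem~\ref{theorem:HBDmuequal} with atoms on $\T$, and the $2$-isometric case) \emph{all} the $\lambda_i$ lie on $\T$ and may a priori coincide --- so deferring them to ``the arguments already present in \cite{LGR}'' leaves the proof incomplete exactly where it is needed. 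The paper closes this uniformly by a different mechanism which you already have all the ingredients for: since $\HM=\varphi H^2$ and $\HN\subseteq H^2$, one has $\varphi g\perp\HN$ for every $g\in\HN$, which translates into $P_{\HN}h(M_z)|\HN=0$ for the numerator polynomial $h$ of $\varphi$, hence $\tilde h(T^\ast|\HN)=0$ with $\tilde h(z)=\overline{h(\bar z)}$; since by \cite[Lemma~7.1]{LGR} the minimal polynomial of $T^\ast|\HN$ equals its characteristic polynomial $\prod_i(z-\overline{\lambda_i})$, the degree bound $\deg h\le n$ forces $h=c\prod_i(z-\lambda_i)$ regardless of multiplicities or of whether the $\lambda_i$ lie on $\T$. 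Substituting this step for your eigenvector argument would make your proof complete; the remaining computations of the constant (via $\int_\T\log|\varphi|$) and of $a$ (via outerness) are correct.
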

\begin{proof}
Note that by \cite[Lemma 6.1 and Theorem 6.2]{LGR}, $\HN=[\ran \Delta]_{T^*} = [\ran \Delta]_{L}$, and $\dim \HN = \deg B = n$. Then by \cite[(7.1)]{LGR},
\begin{align}\label{N as rational functions}
\HN=\left\{\frac{p(z)}{\prod_{i=1}^n(1-\alpha_i z)}: p \text{ is a polynomial of degree }<n\right\},
\end{align}
and $\prod_{i=1}^n(1-\alpha_i z)$ is a constant multiple of the lowest common denominator of the $b_i$'s.
It is clear that $T|\HM$ is an isometry. Since $\dim (\HH[B]\ominus z\HH[B]) = 1$, $\dim \HN = n < + \infty$, we have $\dim (\HM \ominus z \HM) = 1$, see e.g. \cite[Lemma 2.1]{ARS02}. So $T|\HM$ is unitarily equivalent to $(M_z, H^2)$. Thus $\HM = \varphi H^2$, $P_{\HM}K^B_w(z)=\frac{ \varphi(z)\overline{\varphi(w)}}{1-z\overline{w}}$ and $\HH[B] = \varphi H^2 \oplus \HN$. Since $K^B_w(z)=P_{\HN}K^B_w(z)+ P_{\HM}K^B_w(z)$ we conclude that
$$1-\sum_{i=1}^k|b_i(z)|^2= (1-|z|^2)\|P_\HN K^B_z\|^2 + |\varphi(z)|^2$$ for all $z\in \D$. Since all functions in $\HN$ are bounded and $\HN$ is finite dimensional we let $|z|\to 1$ and obtain that $|\varphi|^2=|a|^2$ on $\T$.

 Let $C= (I-P_{\HN})M_zP_{\HN}, A = P_{\HN}M_zP_{\HN}$. Then
$$T=\left[\begin{matrix}T|\HM&C\\0&A\end{matrix}\right].$$
It follows from
$$T^*T-I= \left[\begin{matrix}0&(T|\HM)^*C\\C^*T|\HM &C^*C+A^*A-I\end{matrix}\right]\ge 0$$
that $(T|\HM)^*C = 0$. Note that $\ker (T|\HM)^* = \C \varphi$. Thus $\rank C = 1$ and there exists $f_1 \in \HN$ such that $C f_1 = \varphi$. Then there exists $f_2 \in \HN$ such that $\varphi = zf_2 - f_1$. Thus by (\ref{N as rational functions}), $\varphi(z)= \frac{h(z)}{\prod_{i=1}^n(1-\alpha_i z)}$ for some polynomial $h$ of degree $\le n$.

Since $\HH[B] = \HM \oplus \HN, \HM = \varphi H^2$, we have for any $g \in \HN \subseteq H^2$, $\varphi g \in \HM$. So $h g \in \HM$ or $P_{\HN}h(M_z)|\HN=0$. Let $\tilde{h}(z)=\overline{h(\overline{z})}$, then $\tilde{h}(M_z^*)|\HN=0$. Since by \cite[Lemma 7.1]{LGR}, the minimal polynomial of $T^*|\HN$ equals the characteristic polynomial of $T^*|\HN$ and it has degree $n$, we conclude that $\tilde{h}$ must be a multiple of $\prod_{i=1}^n (z-\overline{\lambda_i})$. This implies that $\varphi(z) = \gamma \frac{\prod_{i=1}^n (z - \lambda_i)}{\prod_{i=1}^n(1-\alpha_i z)}$ for some $\gamma\in \C$. Now $a$ is outer, but has the same modulus as $\varphi$ on $\T$, hence $a(z)= a(0)\frac{\prod_{i=1}^n (1-\overline{\lambda_i}z)}{\prod_{i=1}^n(1-\alpha_i z)}$. The proof is complete.
\end{proof}

We remark here that in the setting of Lemma~\ref{lemma:MateFunctionAndPhi}, the functions in $\HH(B)$ have nontangential limits at $\lambda_i, i =1, 2, \ldots, n$ and $\HN$ is spanned by the reproducing kernels $K_{\lambda_i}^B$ or the derivatives of $K_{\lambda_i}^B$ when the $\lambda_i$ has multiplicities $\geq 2$.

Now we can prove Theorem~\ref{theorem:HBDmuequal}.


\begin{proof}[Proof of Theorem~\ref{theorem:HBDmuequal}]
Suppose $\HH[B]=\HD(\mu)$ for some $\mu = \sum_{j=1}^n c_j \delta_{\lambda_j}$. By \eqref{eq:DirichletMzIdentity},
\begin{align*}
	\La \Delta f,f\Ra_{\HD(\mu)}=\sum_{j=1}^nc_j|f(\lambda_j)|^2,
\end{align*}
i.e. $\Delta = \sum_{i=1}^n c_i K_{\lambda_i}^{B}\otimes K_{\lambda_i}^{B}$. Hence, $\rank \Delta = n$ and $(i)$ implies $(ii)$. By Lemma~\ref{proposition:FiniteRankDmuIsRational}, $B$ is rational with $\deg B = n$, so $(ii)$ implies $(iii)$.

For the final implication, suppose $B$ is rational with degree $n$. Let $\HN = [\ran \Delta]_{T^*}$, and $\HM = \HH[B] \ominus \HN$. By Lemma~\ref{proposition:FiniteRankDmuIsRational}, $\dim \HN = \deg B = \rank \Delta = n$. Let $\varphi \in \HM \ominus z \HM$, $\|\varphi\| = 1$. By Lemma~\ref{lemma:MateFunctionAndPhi} we have that $\HH[B] = \varphi H^2 \oplus \HN$, and $\varphi(z) = \frac{\prod_{i=1}^n (z-\lambda_i)}{q(z)}$ for some polynomial $q(z)$ and some $\lambda_1, \ldots \lambda_n \in \overline{\D}$. Since $\varphi \in \left(\ran\Delta\right)^\perp$, \eqref{eq:DirichletMzIdentity} implies that
\begin{align*}
	0 = \langle\Delta \varphi, \varphi\rangle = \int_{\overline{\D}}|\varphi(z)|^2d\mu(z).
\end{align*}
Thus $|\varphi|^2 = 0$ $\mu$-a.e. Since $|\varphi(z)|^2 > 0$ on $\overline{\D} \backslash \{\lambda_1,\ldots,\lambda_n\}$, we obtain that $\mu=\sum_{j=1}^nc_j\delta_{\lambda_j}$. To see that $c_1,\ldots,c_n>0$ and $\lambda_1,\ldots,\lambda_n\in\overline{\D}$ are distinct in this case, we note that if this is not so, then $\rank \Delta =\deg B$ would be strictly less than $n$.
\end{proof}

If $B\in\HS_0(\ell^2,\C)$ is a rational Schur function such that $M_z\HH[B]\subseteq\HH[B]$, then the polynomials are dense in $\HH[B]$ (\cite[Theorem 5.5]{AM19}). When $B\in\HS_0(\ell^2,\C)$ is such that $M_z\HH[B]\subseteq\HH[B]$ and polynomials are dense in $\HH[B]$, it was proved in \cite[Lemma 3.6]{CGL} that for $\lambda \in \overline{\D}$, $K_\lambda^B$ exists if and only if $\overline{\lambda}$ is an eigenvalue of $M_z^*$, which is equivalent to the condition that every function in $\HH(B)$ has a non-tangential limit at $\lambda$.

Now we prove Theorem \ref{dirichlettype}.

\begin{proof}[Proof of Theorem~\ref{dirichlettype}]
Suppose $\mu = \sum_{i=1}^\infty c_i \delta_{\lambda_i}$. By \eqref{eq:DirichletMzIdentity},
\begin{align}\label{discreteequ}
\langle\Delta f, f \rangle = \int_{\overline{\D}}|f(z)|^2d\mu(z) = \sum_{i=1}^\infty c_i |f(\lambda_i)|^2, \quad f \in D(\mu).
\end{align}
We conclude that $\Delta = \sum_{i=1}^\infty c_i K_{\lambda_i}^{B}\otimes K_{\lambda_i}^{B}$, i.e. $(i)\implies(ii)$.

For the reverse implication, we use that $T^* K_{\lambda_i}^B = \overline{\lambda_i}K_{\lambda_i}^B$, so
\begin{align*}
	T^* \Delta T = \sum_{i=1}^\infty c_i T^* K_{\lambda_i}^B\otimes T^*K_{\lambda_i}^B = \sum_{i=1}^\infty c_i |\lambda_i|^2 K_{\lambda_i}^B\otimes K_{\lambda_i}^B.
\end{align*}
Thus for $n \geq 2$,
\begin{align*}
\sum_{j=0}^n (-1)^j \binom{n}{j} T^{*j} T^j &= \sum_{j=0}^{n-1} (-1)^j \binom{n-1}{j} (T^{*j}T^j - T^{*(j+1)}T^{(j+1)})\\
&= \sum_{j=0}^{n-1}(-1)^j \binom{n-1}{j} T^{*j}(-\Delta) T^j\\
& = -\sum_{j=0}^{n-1}(-1)^j \binom{n-1}{j} \sum_{i=1}^\infty c_i |\lambda_i|^{2j} K_{\lambda_i}^B\otimes K_{\lambda_i}^B\\
& = - \sum_{i=1}^\infty c_i(1-|\lambda_i|^2)^{n-1} K_{\lambda_i}^B\otimes K_{\lambda_i}^B \leq 0.
\end{align*}
It follows that $T$ is of Dirichlet-type. By \cite[Theorem~IV.2.5]{Al93} there exists a finite positive measure $\mu$ and a unitary operator $U\colon \HH[B] \rightarrow \HD(\mu)$ that intertwines $T$ with $T_\mu = (M_z,\HD(\mu))$, i.e. $UT=T_\mu U$. The intertwining relation implies that $U^*$ maps $\ker T_\mu^*$ onto $\ker T^*$. Since $\ker T_\mu^*$ consists precisely of the constant functions on $\D$, we may conclude that $U1 = c$ for some constant $c\in\T$. Since polynomials are dense in $\HD(\mu)$, we conclude that $Uf=cf$ for all $f\in\HH[B]$. In other words, $\HH[B] = \HD(\mu)$. Since $\Delta = \sum_{i=1}^\infty c_i K_{\lambda_i}^B\otimes K_{\lambda_i}^B$, (\ref{discreteequ}) implies that $\mu = \sum_{i=1}^\infty c_i \delta_{\lambda_i}$.
\end{proof}

\subsection{Construction for the reproducing kernel}Now we discuss how to find the rational Schur function $B$ such that $\HD(\mu)=\HH[B]$ when $\mu$ is a finitely atomic measure. Suppose $\HD(\mu) = \HH[B]$ with $\mu = \sum_{i=1}^n c_i \delta_{\lambda_i}$, $c_1,\ldots,c_n > 0$, $\lambda_1,\ldots, \lambda_n \in \overline{\D}$. If all the $\lambda_i$ are in $\T$, then $T=(M_z,\HD(\mu))$ is a $2$-isometry. In this case, it is explained in \cite[Section 11]{LGR}, see also \cite{Sa98}, \cite{Co16}, \cite[Theorem 6.4]{CGR22}. If $\mu$ is supported at one point, i.e. $n=1$, then it is explained in \cite{Sa97, CGR10, CR13, EFKKMR16}.
\begin{lemma}\label{calcuequal}
Consider a finitely atomic measure $\mu=\sum_{i=1}^n c_i \delta_{\lambda_i}$, where $c_1,\ldots, c_n > 0$ and $\lambda_1,\ldots,\lambda_n \in \overline{\D}$. For any such measure, there exists a rational Schur function $B\in\HS_0(\ell^2,\C)$ with $\rank B = \deg B = n$, and $\HD(\mu) = \HH[B]$. Furthermore, $B\sim \frac{1}{q}(p_1,\ldots,p_n)$, where $q$ and $p_1,\ldots,p_n$ satisfy the following relations,
\begin{align*}
|q(z)|^2 = \prod_{i=1}^n |z-\lambda_i|^2 +\sum_{i=1}^n c_i \prod_{j\neq i} |z-\lambda_j|^2,\quad  z \in \T,
\end{align*}
and
\begin{align*}
	p_i(z) = \langle PX(z), e_i\rangle_{\C^n},\quad  i = 1, \ldots, n.
\end{align*}
Here $P$ is an upper triangular $n \times n$ matrix, $X(z) = (z^j)_{1\le j\le n}$, and $\{e_i\}$ is the standard orthonormal basis of $\C^n$.
\end{lemma}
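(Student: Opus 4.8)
The plan is to build on Lemma~\ref{lemma:MateFunctionAndPhi} and the structure theory of $\HH[B]$ spaces already established. By Theorem~\ref{theorem:HBDmuequal}, we know a priori that $\HD(\mu)=\HH[B]$ for a rational Schur function $B$ with $\rank B=\deg B=n$; the content of the lemma is the explicit identification of a denominator $q$ and numerators $p_1,\dots,p_n$. First I would compute the mate $a$ of $B$. Using the identity \eqref{eq:DirichletMzIdentity}, one has $\langle\Delta f,f\rangle=\sum_{i=1}^n c_i|f(\lambda_i)|^2$, so $\Delta=\sum_{i=1}^n c_i K^B_{\lambda_i}\otimes K^B_{\lambda_i}$ and hence $\HN=[\ran\Delta]_{T^*}=\bigvee\{K^B_{\lambda_i}\}$, which (by the remark following Lemma~\ref{lemma:MateFunctionAndPhi}) has dimension $n$ with $T^*|\HN$ having eigenvalues $\overline{\lambda_1},\dots,\overline{\lambda_n}$. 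Part~(ii) of Lemma~\ref{lemma:MateFunctionAndPhi} then gives $\varphi(z)=e^{it}a(0)\prod_i(z-\lambda_i)/\prod_i(1-\alpha_iz)$ and $|\varphi|=|a|$ on $\T$, where $\prod_i(1-\alpha_iz)$ is (a constant multiple of) the common denominator of the $b_i$. Since $1-\|B(z)\|^2=|a(z)|^2$ on $\T$, writing $q$ for the common denominator gives $|q(z)|^2(1-\|B(z)\|^2)=|q(z)|^2|a(z)|^2$ on $\T$, and I would identify $|q|^2|a|^2$ with $\prod_i|z-\lambda_i|^2$ up to scaling.

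The key computation is to pin down $|q(z)|^2$ on $\T$. The natural route is via the reproducing kernel identity from the proof of Lemma~\ref{lemma:MateFunctionAndPhi}: $1-\|B(z)\|^2=(1-|z|^2)\|P_\HN K^B_z\|^2+|\varphi(z)|^2$. Multiplying through by $|q(z)|^2$ and letting $|z|\to1$, the first term vanishes (functions in $\HN$ are bounded, $\HN$ finite-dimensional), leaving $|q(z)|^2(1-\|B(z)\|^2)=|q(z)\varphi(z)|^2=c\prod_i|z-\lambda_i|^2$ on $\T$ for some $c>0$. On the other hand, $|q(z)|^2\|B(z)\|^2=\sum_i|p_i(z)|^2$, and I would show $\sum_i|p_i(z)|^2=\sum_i c_i\prod_{j\ne i}|z-\lambda_j|^2$ on $\T$ by exploiting the fact that $\HN$, spanned by the $K^B_{\lambda_i}$, consists of the rational functions with denominator $q$ and that the norm structure encodes the weights $c_i$ through $\Delta=\sum c_i K^B_{\lambda_i}\otimes K^B_{\lambda_i}$. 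Combining, $|q(z)|^2=|q(z)|^2(1-\|B(z)\|^2)+|q(z)|^2\|B(z)\|^2=\prod_i|z-\lambda_i|^2+\sum_i c_i\prod_{j\ne i}|z-\lambda_j|^2$ on $\T$ (absorbing the constant $c$ into normalization), which is the asserted formula.

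For the numerators, I would use that $\HM=\HH[B]\ominus\HN=\varphi H^2$ and $\HH[B]=\varphi H^2\oplus\HN$, so $\HH[B]$ is a space of rational functions with denominator $q$; the map $f\mapsto qf$ identifies $\HH[B]$ with a subspace of polynomials. The reproducing kernel $K^B_w(z)=(1-\langle B(z),B(w)\rangle)/(1-z\overline w)$ and the decomposition $K^B_w=P_\HM K^B_w+P_\HN K^B_w$ with $P_\HM K^B_w(z)=\varphi(z)\overline{\varphi(w)}/(1-z\overline w)$ let me solve for $\langle B(z),B(w)\rangle$, hence for $\sum_i p_i(z)\overline{p_i(w)}$ after clearing denominators. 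Writing $B=\frac1q(p_1,\dots,p_n)$ and choosing a basis adapted to $X(z)=(z^j)_{1\le j\le n}$, the numerators $p_i$ become linear combinations $\langle PX(z),e_i\rangle$ for an upper triangular $P$ (the triangularity reflecting the degree filtration on the polynomial model and the normalization $B(0)=0$, i.e. $p_i(0)=0$). The main obstacle will be the bookkeeping in this last step: correctly matching the Gram structure of $\{K^B_{\lambda_i}\}$ under the weights $c_i$ to the coefficient matrix $P$, and verifying that the resulting $B$ genuinely has $\rank B=\deg B=n$ rather than a smaller degree — but this follows as in the proof of Theorem~\ref{theorem:HBDmuequal} from the distinctness of the $\lambda_i$ and positivity of the $c_i$.
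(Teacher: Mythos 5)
Your overall architecture matches the paper's: invoke Theorem~\ref{theorem:HBDmuequal} and Lemma~\ref{lemma:MateFunctionAndPhi} to get $\HH[B]=\varphi H^2\oplus\HN$ with $\varphi(z)=\prod_i(z-\lambda_i)/q(z)$ and $|\varphi|=|a|$ on $\T$, and finish the numerators via $B(z)B(w)^*=\frac{1}{q(z)\overline{q(w)}}\La P^*PX(z),X(w)\Ra_{\C^n}$ and a Cholesky factorization of $P^*P$. However, there is a genuine gap at the central step, the determination of $|q|^2$ on $\T$. Your decomposition $|q|^2=|q|^2(1-\|B\|^2)+|q|^2\|B\|^2$ only pins down the first summand (it equals $\prod_i|z-\lambda_i|^2$, which is just a restatement of $|\varphi|=|a|$ together with the normalization of $q$). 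The claim that the second summand $\sum_i|p_i|^2$ equals $\sum_i c_i\prod_{j\neq i}|z-\lambda_j|^2$ is exactly equivalent to the formula being proved, and you offer no mechanism for it beyond the assertion that ``the norm structure encodes the weights $c_i$.'' Extracting $P_\HN K^B_w$ or the Gram matrix of $\{K^B_{\lambda_i}\}$ from $\Delta=\sum_i c_iK^B_{\lambda_i}\otimes K^B_{\lambda_i}$ requires knowing the kernel, hence $q$, so that route is circular: in the paper (and in the discussion following the lemma) the dual basis $f_i=d_i\prod_{j\neq i}(z-\lambda_j)/q$ is only available \emph{after} $q$ has been found.

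The missing idea is an orthogonality computation in the $\HD(\mu)$-norm. Since $\varphi\perp z^k\varphi$ for $k\ge 1$ and $\varphi(\lambda_i)=0$ for all $i$, the norm formula \eqref{normdu} gives
\begin{align*}
0=\La z^k\varphi,\varphi\Ra=\int_\T z^k|a(z)|^2\Bigl(1+\sum_{i=1}^n\frac{c_i}{|z-\lambda_i|^2}\Bigr)\frac{|dz|}{2\pi},\quad k\ge 1,
\end{align*}
and by conjugation the same holds for $k\le -1$; since the integrand's weight is nonnegative with zeroth Fourier coefficient $\|\varphi\|^2=1$, it must equal $1$ a.e.\ on $\T$, which upon clearing denominators yields $|q(z)|^2=\prod_i|z-\lambda_i|^2+\sum_i c_i\prod_{j\neq i}|z-\lambda_j|^2$. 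This is where the weights $c_i$ actually enter, and it is absent from your argument. (A minor further point: the triangularity of $P$ comes purely from Cholesky applied to the positive semi-definite matrix $P^*P$, not from a ``degree filtration''; the normalization $B(0)=0$ is encoded by $X(z)=(z^j)_{1\le j\le n}$ starting at $j=1$.)
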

\begin{proof}
The norm in $\HD(\mu)=\HH[B]$ is
\begin{align}\label{normdu}
\|f\|^2 = \|f\|_{H^2}^2 + \sum_{i=1}^n c_i D_{\lambda_i}(f).
\end{align}
Let $\HN = [\ran \Delta]_{T^*}, \HM = \HH(B) \ominus \HN$. Suppose $B = (b_1, \ldots, b_n) = \frac{1}{q}(p_1,\ldots,p_n)$, where $p_i$ are polynomials with $\deg p_i \leq n$, and $q$ is a constant multiple of the least common denominator of the $b_i$'s with $q(0) = 1/a(0)$, $a$ being the mate of $B$. Since $\Delta = \sum_{i=1}^n c_i K_{\lambda_i}^{\mu}\otimes K_{\lambda_i}^{\mu}$, we see that $\HN = \bigvee\{K_{\lambda_i}^{\mu}=K_{\lambda_i}^{B}\mid i = 1, \ldots, n\}$. Let $\varphi \in \HM \ominus z\HM, \|\varphi\| = 1$. Then by Lemma~\ref{lemma:MateFunctionAndPhi} we have $\HH[B] = \varphi H^2 \oplus \HN$, $\varphi(z) = \frac{\prod_{i=1}^n (z-\lambda_i)}{q(z)}$ and $a(z) = \frac{\prod_{i=1}^n(1-\overline{\lambda_i}z)}{q(z)}$.
Now using $\varphi(\lambda_i) = 0$, $i =1, \ldots,n$ we obtain for $k \geq 1$
\begin{align*}
0 = \langle z^k\varphi,\varphi\rangle &= \langle z^k\varphi,\varphi\rangle_{H^2}+\sum_{i=1}^n c_i \int_\T \frac{z^k\varphi(z) \overline{\varphi(z)}}{|z-\lambda_i|^2}\frac{|dz|}{2\pi}\\
& = \int_\T z^k |a(z)|^2 \left(1+ \sum_{i=1}^n \frac{c_i}{|z-\lambda_i|^2}\right) \frac{|dz|}{2\pi}.
\end{align*}
Thus $|a(z)|^2 \left(1+ \sum_{i=1}^n \frac{c_i}{|z-\lambda_i|^2}\right) = 1, a.e. ~\T$, and so
\begin{align*}
|q(z)|^2 = \prod_{i=1}^k |z-\lambda_i|^2 +\sum_{i=1}^k c_i \prod_{j\neq i} |z-\lambda_j|^2, \quad z \in \T.
\end{align*}
Since the $p_i(z)$ are polynomials, we can assume $p_i(z) = \langle PX(z), e_i\rangle_{\C^n}$, $i = 1, \ldots, n$ for some $n \times n$ matrix $P$. Note that
\begin{align*}
B(z) B(w)^*& = \frac{1}{q(z)\overline{q(w)}}\sum_{i=1}^n \langle PX(z), e_i\rangle_{\C^n} \overline{\langle PX(w), e_i\rangle_{\C^n}}\\
& = \frac{1}{q(z)\overline{q(w)}} \langle PX(z), PX(w)\rangle_{\C^n}\\
& = \frac{1}{q(z)\overline{q(w)}} \langle P^*PX(z), X(w)\rangle_{\C^n}.
\end{align*}
Since $P^*P$ is positive semi-definite, by Cholesky's theorem \cite[Theorem~4.4]{PR16}, we can take $P$ to be an upper triangular matrix. The proof is complete.
\end{proof}
In the setting of the above lemma,
\begin{align*}
K^{\mu}_w(z) = K^B_w(z)= P_{\HN}K_w^B(z)+\frac{\varphi(z)\overline{\varphi(w)}}{1-z\overline{w}}.
\end{align*}
If $\varphi(z) = \frac{\prod_{i=1}^n (z-\lambda_i)}{q(z)}$ is determined, then to obtain $K^B_w(z)$, it is left to determine $P_{\HN}K_w^B(z)$, which can be done by using the dual basis of $\{K_{\lambda_i}^{B}\}_{i=1}^n$.

To be precise, suppose $\{f_i\}_{i=1}^n$ is a dual basis of $\{K_{\lambda_i}^{B}\}_{i=1}^n$, i.e.
\[
\langle f_i, K_{\lambda_j}^{B}\rangle = \delta_{ij},\quad 1\le i,j\le n.
\]
Since the functions in $\HN$ have the form $\frac{p(z)}{q(z)}$ with $p$ polynomials of degree $< n$ (\cite[(7.1)]{LGR}), we get that for each $i$, $f_i(z) = d_i \frac{\prod_{j\neq i} (z-\lambda_j)}{q(z)}$, where $d_i$ is determined by $f_i(\lambda_i) = 1$.
Note that
\[
P_\HN = \sum_{i=1}^n K_{\lambda_i}^B \otimes f_i = \sum_{i=1}^n f_i\otimes K_{\lambda_i}^B.
\]
Then $P_{\HN}K_w^B(z) = \sum_{i=1}^n f_i(z) \overline{K_{\lambda_i}^B(w)}$. Also for each $j$, $f_j = \sum_{i=1}^n K_{\lambda_i}^B \langle f_j, f_i\rangle$, where $\langle f_j, f_i\rangle$ can be calculated by (\ref{normdu}). Finally we solve for $K_{\lambda_i}^B$ from the linear equations $f_j = \sum_{i=1}^n K_{\lambda_i}^B \langle f_j, f_i\rangle$ , $j =1, \ldots, n$, and obtain $P_{\HN}K_w^B(z)$.

We use the following example to illustrate the above construction.
\begin{example}\label{reproducingtwopts}
Let $\mu = \delta_1 + \delta_{0}$. Then the norm in $\HD(\mu)$ is
\[
\|f\|^2 = \|f\|_{H^2}^2 + D_1(f) + D_{0}(f).
\]
We have $\HD(\mu) = \HH[B]$ for some rational Schur function $B$. Note that $K_0^B(z) = K_0^\mu(z) = 1$, $z \in \D$. Using the notation as above, we have $\Delta = K_1^B \otimes K_1^B + 1 \otimes 1$, $\varphi(z) = \frac{(z-1)z}{q(z)}$, and $a(z) = \frac{1-z}{q(z)}$, where $q(0) > 0$ and
\[
|q(z)|^2 = |z-1|^2|z|^2 + |z|^2 + |z-1|^2, \quad z \in \T.
\]
So $q(z) = 2-z$. Let $\{f_1, f_2\}$ be the dual basis of $\{K_1^B, 1\}$. Then
\[
f_1(z) = \frac{z}{2-z},\quad f_2(z) = \frac{2(1-z)}{2-z}.
\]
By calculation, we have $\|f_1\|^2 = 2, \langle f_1, f_2\rangle = 2, \|f_2\|^2 = 3$. We then obtain
\begin{align*}
K_1^B (z) = \frac{-z/2 +2}{2-z}.
\end{align*}
Thus
\begin{align*}
K^B_w(z)& = P_{\HN}K_w^B(z)+\frac{\varphi(z)\overline{\varphi(w)}}{1-z\overline{w}}\\
& = f_1(z) \overline{K_{1}^B(w)} + f_2(z) + \frac{1}{1-z\overline{w}} \frac{(z-1)z}{q(z)} \frac{\overline{(w-1)w}}{\overline{q(w)}}\\
& = \frac{1-(z\overline{w})(1/2z\overline{w} -z-\overline{w}+ 5/2)/((2-z)(2-\overline{w}))}{1-z\overline{w}}.
\end{align*}
For the Schur function $B$, we can take $B(z) = (\frac{2z^2/\sqrt{10}-\sqrt{10}z/2}{2-z}, \frac{z^2/\sqrt{10}}{2-z})$.
\end{example}

\section{$n$-isometric shifts on finite rank $\HH[B]$-spaces}\label{section:n-isometricH[B]Shifts}

\subsection{The $\HD_{\vec{\mu}}$-model for cyclic $n$-isometries} We essentially follow the notation in \cite{Ry19}, but deviate by using $\Delta^{(n)}$ rather than $\beta_n(T)$ to denote the higher order defect operators. Let $\HD$ be the space of $C^\infty$ functions on $\T$, and $\hat{f}(k) = \int_\T f(\zeta) \overline{\zeta}^k \frac{|d\zeta|}{2\pi}$ denote the standard Fourier coefficients of $f\in\HD$. Of particular interest will be the Poisson kernels associated to $z\in\D$, i.e. the smooth functions
\begin{align*}
	P_z\colon\zeta\mapsto = \frac{1-|z|^2}{|z-\zeta|^2} = \sum_{k=0}^\infty (\overline{\zeta}z)^k + \sum_{k=1}^\infty (\zeta\overline{z})^k,\quad \zeta\in\T.
\end{align*}

Let $\HD'$ be the dual of $\HD$, the space of distributions on $\T$, and $\hat{\mu}(k) = \mu(\overline{\zeta}^k)$ denote the standard Fourier coefficients of $\mu\in\HD'$. With these conventions the duality pairing becomes
\begin{align*}
	\La f,\mu\Ra = \sum_{k\in\Z}\hat f(k)\hat \mu(-k),\quad f\in\HD,\mu\in\HD'.
\end{align*}
The Poisson extension of $\mu\in\HD'$ is the harmonic function
\begin{align*}
	P_\mu\colon z\mapsto \mu(P_z)=\sum_{k=0}^\infty \hat{\mu}(k) z^k + \sum_{k=1}^\infty \hat{\mu}(-k) \overline{z}^k,\quad z\in\D.
\end{align*}
With the standard identification of $\HD$ as a subspace of $\HD'$, we say that $f\in\HD$ is analytic if its Poisson extension is analytic. This happens precisely when $\hat f (k)=0$ for $k<0$, and we denote the corresponding class of analytic functions by $\HD_a$.

Given $\mu \in \HD'$ and $f \in \HD_a$, the harmonically weighted Dirichlet integral of order $i \in \Z_{\geq 1}$ is defined by
\begin{align*}
	D_{\mu,i}(f) = \lim_{r \rightarrow 1^-}\frac{1}{\pi i!(i-1)!}\int_{r\D}|f^{(i)}(z)|^2 P_\mu(z) (1-|z|^2)^{i-1} dA(z).
\end{align*}
For $i = 0$ we define
\begin{align*}
	D_{\mu,0}(f) = \lim_{r \rightarrow 1^-}\int_{\T}|f(rz)|^2 P_\mu(rz)\frac{|dz|}{2\pi},
\end{align*}
and for $i < 0$ we use the convention that $D_{\mu,i} (f) = 0$.

Let $\vec{\mu} = (\mu_0, \ldots, \mu_{n-1}) \in (\HD')^n$, and define the quadratic form
\begin{align*}
	\|f\|_{\vec{\mu}}^2 = \sum_{i=0}^{n-1} D_{\mu_i,i}(f), f \in \HD_a.
\end{align*}
Note that $\|1\|_{\vec{\mu}}^2=\hat \mu_0(0)$. If $\|1\|_{\vec{\mu}}^2 = 1$ and there exists $C > 0$ such that
\begin{align*}
	0 \leq \|zf\|_{\vec{\mu}}^2 \leq C \|f\|_{\vec{\mu}}^2, f \in \HD_a,
\end{align*}
then we say that $\vec{\mu}$ is a \emph{normalized allowable $n$-tuple}. For any such tuple, let
\begin{align*}
	\HK_{\vec{\mu}} = \{f\in\HD_a\mid \|f\|_{\vec{\mu}}=0\},
\end{align*}
and define $\HD_{\vec{\mu}}$ as the completion of $\HD_a/\HK_{\vec{\mu}}$ with respect to the norm (induced by) $\|\cdot\|_{\vec{\mu}}$. Since $M_z$ is well defined on $\HD_a/\HK_{\vec{\mu}}$, it extends to be a bounded linear operator on $\HD_{\vec{\mu}}$.

We are now ready to state the model theorem from \cite{Ry19}.
\begin{theorem}[\cite{Ry19}]\label{theorem:RydheModel}
If $\vec{\mu} \in (\HD')^n$ is a normalized allowable $n$-tuple, then $T_{\vec{\mu}}=(M_z, \HD_{\vec{\mu}})$ is a bounded $n$-isometry, and $1$ is a cyclic unit vector for $T_{\vec{\mu}}$.

Conversely, if $T \in \HB(\HH)$ is an $n$-isometry with a cyclic unit vector $e$, then $\vec{\mu} \in (\HD')^n$ given by
\begin{align}\label{eq:DefinitionOfAllowableTuple}
	\hat{\mu}_i(k) = \overline{\hat{\mu}_i(-k)} = \langle \Delta^{(i)}e, T^k e\rangle_\HH, \quad k \in \Z_{\geq 0}
\end{align}
is a normalized allowable $n$-tuple, and there exists a unitary map $U: \HH \rightarrow \HD_{\vec{\mu}}$ such that $UT = T_{\vec{\mu}} U$ and $U e = 1$.

If $T_j: \HH_j \rightarrow \HH_j, j =1, 2$ are bounded $n$-isometries with cyclic vectors $e_j$, then the associated $n$-tuple $\vec{\mu}_j$ coincide if and only if there exists a unitary map $U: \HH_1 \rightarrow \HH_2$ such that $T_1 = U^* T_2 U$ and $Ue_1 = e_2$.
\end{theorem}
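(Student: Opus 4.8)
The plan is to follow the approach of \cite{Ry19} and use cyclicity to collapse all three assertions to a single Gram-matrix identity. For the converse direction, given an $n$-isometry $T\in\HB(\HH)$ with cyclic unit vector $e$, I would first define $V$ on the dense subspace $\{p(T)e\colon p\ \text{a polynomial}\}$ by $V(p(T)e):=p$, regarded as an element of $\HD_a$; the relations $VT=M_zV$ and $Ve=1$ are then automatic, and the whole content is that $V$ is isometric. Since $\|p(T)e\|_\HH^2$ and $\|p\|_{\vec\mu}^2$ are both sesquilinear in the coefficients of $p$, this is equivalent to the single identity
\begin{align*}
	\La T^ae, T^be\Ra_\HH = \La z^a, z^b\Ra_{\vec\mu}, \qquad a,b\in\Z_{\ge0},
\end{align*}
where on the right $\La\cdot,\cdot\Ra_{\vec\mu}$ is the polarization of $\|\cdot\|_{\vec\mu}^2=\sum_{i=0}^{n-1}D_{\mu_i,i}(\cdot)$. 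Once it holds, $V$ extends to a unitary $U\colon\HH\to\HD_{\vec\mu}$ — the range is dense since polynomials are dense in $\HD_{\vec\mu}$ by construction — with $UT=T_{\vec\mu}U$ and $Ue=1$.

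For the left-hand side I would invoke the Agler--Stankus calculus: a short induction from $\Delta^{(j)}=T^*\Delta^{(j-1)}T-\Delta^{(j-1)}$ gives $\Delta^{(j)}=\sum_{l=0}^j(-1)^{j-l}\binom{j}{l}T^{*l}T^l$, and Newton's forward-difference formula inverts this to $T^{*m}T^m=\sum_{j\ge0}\binom{m}{j}\Delta^{(j)}$, a sum terminating at $j=n-1$ precisely because $T$ is an $n$-isometry; hence, with $b=a+k$, $k\ge0$, and $T^{*(a+k)}=T^{*k}T^{*a}$,
\begin{align*}
	\La T^ae, T^{a+k}e\Ra_\HH = \La T^{*k}(T^{*a}T^a)e, e\Ra_\HH = \sum_{j=0}^{n-1}\binom{a}{j}\La\Delta^{(j)}e, T^ke\Ra_\HH = \sum_{j=0}^{n-1}\binom{a}{j}\hat\mu_j(k)
\end{align*}
by the definition \eqref{eq:DefinitionOfAllowableTuple} of $\vec\mu$ (the case $b<a$ follows by conjugation and $\hat\mu_j(-k)=\overline{\hat\mu_j(k)}$). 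As byproducts, $\hat\mu_0(0)=\|e\|_\HH^2=1$ (the normalization) and, from the same expansion, $\|T^k\|=O(k^{(n-1)/2})$, so each $\vec\mu_j$ genuinely defines a distribution on $\T$. For the right-hand side I would compute $\La z^a, z^{a+k}\Ra_{D_{\mu,i}}$ directly from the definition: expand $P_\mu(z)=\sum_{l\ge0}\hat\mu(l)z^l+\sum_{l\ge1}\hat\mu(-l)\overline z^l$, pass to polar coordinates so that orthogonality leaves only the $l=k$ term, and evaluate the radial integral by the Beta identity $\int_0^1 t^N(1-t)^{i-1}\,dt=N!\,(i-1)!/(N+i)!$; after cancellation the expression collapses, uniformly for $i\ge0$ (the case $i=0$ coming from the circle average of the harmonic function $P_\mu$), to $\La z^a, z^{a+k}\Ra_{D_{\mu,i}}=\binom{a}{i}\hat\mu(k)$. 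Summing over $i=0,\dots,n-1$ reproduces $\sum_{i=0}^{n-1}\binom{a}{i}\hat\mu_i(k)$, matching the left-hand side. This proves the Gram identity, hence the existence of $U$; allowability of $\vec\mu$ then follows from $\|M_zp\|_{\vec\mu}=\|Tp(T)e\|_\HH\le\|T\|\,\|p\|_{\vec\mu}$ on polynomials plus density. For polynomial arguments the limits $r\to1^-$ in the definition of $D_{\mu,i}$ cause no difficulty, the integrals being finite linear combinations of Beta integrals over $[0,r]$.

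For the forward assertion, boundedness of $T_{\vec\mu}$ and cyclicity of the unit vector $1$ are immediate from the allowable bound and the construction (boundedness also uses the closed graph theorem, as recalled in Section~\ref{section:Preliminaries}). That $T_{\vec\mu}$ is an $n$-isometry I would check on polynomials: from the formula just derived, $\La T_{\vec\mu}^{*l}T_{\vec\mu}^lz^a, z^b\Ra=\La z^{a+l}, z^{b+l}\Ra_{\vec\mu}=\sum_{i=0}^{n-1}\binom{\min(a,b)+l}{i}\hat\mu_i(b-a)$ for $b\ge a$, and since $l\mapsto\binom{\min(a,b)+l}{i}$ is a polynomial of degree $i<n$, the alternating binomial sum $\sum_{l=0}^n(-1)^{n-l}\binom{n}{l}(\cdot)$ annihilates every term, so $\La\Delta^{(n)}(T_{\vec\mu})z^a, z^b\Ra=0$ and $\Delta^{(n)}(T_{\vec\mu})=0$ by density. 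The uniqueness clause is then formal: if $T_1,T_2$ have the same associated tuple, the unitaries $U_j$ from the converse direction satisfy $U_jT_j=T_{\vec\mu}U_j$ and $U_je_j=1$, so $U_2^*U_1$ is the required intertwiner; conversely, a unitary $U$ with $T_1=U^*T_2U$ and $Ue_1=e_2$ transports $\Delta^{(i)}$ and the powers of $T$, so \eqref{eq:DefinitionOfAllowableTuple} yields the same tuple for $T_1$ and $T_2$.

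The step I expect to be the main obstacle is the analytic bookkeeping on the function side: justifying the term-by-term integration of the Poisson-series expansion against the weight $|f^{(i)}|^2(1-|z|^2)^{i-1}$, and — for general $f\in\HD_a$ rather than polynomials — the existence and finiteness of the limits defining $D_{\mu,i}$, together with the fact that $\HD_{\vec\mu}$ genuinely coincides with the $\|\cdot\|_{\vec\mu}$-closure of the polynomials (needed so that $V$ has dense range and so that the allowable estimate on polynomials propagates to all of $\HD_a$). Restricted to polynomials — which, by the cyclicity hypotheses, is all that is really needed — these points are routine; the conceptual core is that the binomial coefficients $\binom{m}{j}$ in the Agler--Stankus expansion of $T^{*m}T^m$ are exactly the ones produced by the Beta-integral evaluation of the harmonically weighted Dirichlet integrals, which is precisely what forces the operator-theoretic and function-theoretic models to agree.
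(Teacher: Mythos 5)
This theorem is quoted from \cite{Ry19} and is not proved in the present paper, so there is no internal proof to compare against; your reconstruction follows essentially the same route as \cite{Ry19}, reducing all three assertions to the Gram identity $\La T^ae,T^be\Ra_\HH=\La z^a,z^b\Ra_{\vec\mu}$ via the Agler--Stankus expansion $T^{*m}T^m=\sum_{j=0}^{n-1}\binom{m}{j}\Delta^{(j)}$ on the operator side and the Beta-integral evaluation of $\La z^a,z^{a+k}\Ra_{D_{\mu_i,i}}=\binom{a}{i}\hat\mu_i(k)$ on the function side, and both computations check out. The only substantive points you leave open --- density of the polynomials in $\HD_a$ for $\|\cdot\|_{\vec\mu}$ (needed both for the cyclicity of $1$ and for the surjectivity of $U$) and the existence of the $r\to1^-$ limits for general $f\in\HD_a$ --- are correctly identified and are indeed routine, given the polynomial growth of $\hat\mu_i(k)$ against the rapid decay of $\hat f(k)$.
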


When $\vec{\mu} = (\mu_0, \ldots, \mu_{n-1}) \in (\HD')^n$ is a normalized allowable $n$-tuple, it is known that the leading distribution $\mu_{n-1}$ is a nonnegative measure (e.g. \cite[Proposition 2.1]{Ry19}). Moreover, it follows from \eqref{eq:DefinitionOfAllowableTuple} that $\mu_{n-1}(|p|^2)=\La \Delta^{(n-1)} p(T)e,p(T)e\Ra$ for any polynomial $p$. Since $e$ is cyclic, it follows that $\Delta^{(n-1)}\ge 0$, and $T$ is a strict $n$-isometry if and only if $\mu_{n-1}\ne 0$. The remaining distributions $\mu_0,\ldots,\mu_{n-2}$ may be less regular (a simple example is provided by Corollary~\ref{corollary:Dlambdam=Dvecmu}).

We note that the case $\mu_0=\frac{|dz|}{2\pi}$ corresponds precisely to the case where the cyclic unit vector $e\in\ker T^*$. Recalling that any $n$-isometry is bounded below, one can show that if $e\in\ker T^*$, then $\|\cdot\|_{\vec{\mu}}$ defines a proper norm on $\HD_a$, e.g. \cite[Proposition~7.3]{Ry19}. If $e\in\ker T^*$ and $T$ is expansive, then we even obtain that $\HD_{\vec{\mu}}\subseteq H^2$ in the sense of a contractive embedding.

\subsection{$n$-isometric shifts on $\HH(b)$} Recall that if $b\in\HS_0(\C,\C)$, then $M_z\HH(b)\subseteq\HH(b)$ if and only if $b$ satisfies \eqref{eq:MateExistenceCondition}. In this case $(M_z,\HH(b))$ is bounded and expansive. We now outline some results from \cite[Section~9]{LGR} on when $(M_z,\HH(b))$ is a higher order isometry. The notion of \emph{higher order local Dirichlet spaces} will be central to this discussion.

If $\lambda\in \T$, then  the local Dirichlet space of order $m \in \Z_{\geq 1}$ at $\lambda$ is defined by
\begin{align*}
	\HD_\lambda^m=\{p+(z-\lambda)^mh: p \text{ is a polynomial of degree }<m \text{ and } h\in H^2\}.
\end{align*}
The norm on $\HD^m_\lambda$ is defined by
\begin{align*}
	\|p+(z-\lambda)^mh\|_{\HD_\lambda^m}^2=\|p+(z-\lambda)^mh\|_{H^2}^2+\|h\|_{H^2}^2.
\end{align*}
If $f\in \HD_\lambda^m$, then the derivatives $f^{(j)}(\lambda)$ exist in the sense of radial limits for $0\le j\le m-1$. This allows us to consider the Taylor polynomial
\begin{align*}
	T_{m-1}(f,\lambda)(z)=\sum_{j=0}^{m-1} \frac{f^{(j)}(\lambda)}{j!}(z-\lambda)^j.
\end{align*}
It holds that $f\in \HD_\lambda^m$ if and only if $T_{m-1}(f,\lambda)$ exists and the higher order local Dirichlet integral
\begin{align} \label{localDiri}
	D_\lambda^m(f)= \int_{|z|=1} \left| \frac{f(z)-T_{m-1}(f,\lambda)(z)}{(z-\lambda)^m}\right|^2\frac{|dz|}{2\pi}
\end{align}
is finite. In this case, $f(z)=T_{m-1}(f,\lambda)(z)+(z-\lambda)^mh(z)$, and $D_\lambda^m(f)=\|h\|_{H^2}^2$. We note that if $m=1$, then we obtain the local Dirichlet integral $D_\lambda^1(f)=D_\lambda(f)$ discussed above.

If $T$ is an $n$-isometry, but not an $(n-1)$-isometry, then we call $T$ a \emph{strict} $n$-isometry.
\begin{theorem}[{\cite[Theorem~1.1]{LGR}}]\label{theorem:n-isometriesOnH(b)} Suppose $b\in\HS_0(\C,\C)$ satisfies \eqref{eq:MateExistenceCondition}, and let $m\in \Z_{\geq 1}$. Then $(M_z,\HH(b))$ is not a strict $(2m+1)$-isometry. Moreover, the following are equivalent:
\begin{enumerate}
\item $(M_z,\HH(b))$ is a strict $2m$-isometry.
\item There exists $\lambda\in \T$ and a polynomial $p$ of degree $< m$ such that $p(\lambda) \ne 0$ and
\begin{align*}
	\|f\|^2_{\HH(b)}=\|f\|^2_{H^2}+D_\lambda^m(pf).
\end{align*}
\end{enumerate}
If the above conditions hold, then $\HH(b)=\HD_\lambda^m$ with equivalence of norms.
\end{theorem}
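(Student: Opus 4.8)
The plan is to convert the $n$-isometry condition into a finite-difference criterion for the rank-one defect, to extract the spectral structure by an exponential-polynomial argument, and finally to match the resulting space with $\HD_\lambda^m$. Since $b\neq 0$, Lemma~\ref{proposition:FiniteRankDmuIsRational} gives $\rank\Delta=\rank b=1$, so I may write $\Delta=g\otimes g$ with $0\neq g\in\HH(b)$. Putting $h_j=T^{*j}g$ and $\Gamma_j=h_j\otimes h_j=T^{*j}\Delta T^{j}$, unwinding $\Delta^{(n)}=T^*\Delta^{(n-1)}T-\Delta^{(n-1)}$ yields
\begin{align*}
\Delta^{(n)}=\sum_{j=0}^{n-1}(-1)^{n-1-j}\binom{n-1}{j}\Gamma_j,
\end{align*}
so that $\Delta^{(n)}$ is exactly the $(n-1)$-st forward difference of the operator sequence $(\Gamma_j)_{j\ge0}$. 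Since the $(n-1)$-st difference vanishes precisely when the sequence is a polynomial of degree $\le n-2$, while the $(n-2)$-nd difference is $\Delta^{(n-1)}$, I obtain the criterion: $T=(M_z,\HH(b))$ is a strict $n$-isometry if and only if $j\mapsto\Gamma_j$ is an operator-valued polynomial of degree exactly $n-2$.

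Next I would read off the structure. Writing $\HN=[\ran\Delta]_{T^*}=[g]_{T^*}$ and $S=T^*|_{\HN}$, we have $h_j=S^jg$ with $g$ cyclic for $S$. If $\Gamma_j$ is polynomial of degree $n-2$, then the rank-one operators $h_j\otimes h_j$ lie in a finite-dimensional operator space, forcing $\dim\HN<\infty$ and hence $b$ rational. Expanding $S^jg=\sum_l q_l(j)\overline{\lambda_l}^{\,j}$ over the distinct eigenvalues $\overline{\lambda_l}$ of $S$, the entries of $\Gamma_j$ carry exponential factors $(\overline{\lambda_l}\lambda_{l'})^{j}$ and $|\lambda_l|^{2j}$; a standard frequency-and-growth analysis shows that these combine into a polynomial in $j$ only when $S$ has a single eigenvalue $\overline{\lambda}$ with $|\lambda|=1$. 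As $g$ is cyclic, $S$ is then a single Jordan block, and reading off the top of the Jordan chain gives $\deg_j\Gamma_j=2\dim\HN-2$. With $\dim\HN=\deg b$ from Lemma~\ref{lemma:MateFunctionAndPhi}, this forces $n=2\deg b$. In particular $n$ is even, so $(M_z,\HH(b))$ is never a strict $(2m+1)$-isometry, and it is a strict $2m$-isometry precisely when $\deg b=m$.

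Finally I would identify $\HH(b)$ with $\HD_\lambda^m$. Applying Lemma~\ref{lemma:MateFunctionAndPhi} in the present situation (all eigenvalues of $T^*|_{\HN}$ equal to $\overline{\lambda}$, $|\lambda|=1$, $\dim\HN=m$) gives $\HH(b)=\varphi H^2\oplus\HN$ with $\varphi(z)=c\,(z-\lambda)^m/q(z)$ and mate $a(z)=a(0)(1-\overline{\lambda}z)^m/q(z)$, where $q$ is the common denominator of $b$ and $q(\lambda)\neq0$ because $L|_{\HN}$ is a contraction with spectrum in $\D$. Since $\varphi$ vanishes to order exactly $m$ at $\lambda$ while every element of $\HN$ is regular there, each $f\in\HH(b)$ has a Taylor polynomial $T_{m-1}(f,\lambda)$ with $f-T_{m-1}(f,\lambda)\in(z-\lambda)^mH^2$, and conversely; this identifies $\HH(b)$ and $\HD_\lambda^m$ as sets. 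For the norm I would compute the nonnegative excess form $f\mapsto\|f\|_{\HH(b)}^2-\|f\|_{H^2}^2$ on the decomposition $\varphi H^2\oplus\HN$, using $|\varphi|^2=|a|^2=1-|b|^2$ on $\T$, and show it equals $D_\lambda^m(pf)$ for the polynomial $p$ of degree $<m$ read off from $q$, with $p(\lambda)\neq0$ exactly because $q(\lambda)\neq0$. The converse implication $(ii)\Rightarrow(i)$ is easier: starting from $\|f\|^2=\|f\|_{H^2}^2+D_\lambda^m(pf)$, one verifies directly—via the recursion satisfied by $D_\lambda^m$ under multiplication by $z$, equivalently via the polynomiality of $\Gamma_j$ computed as above—that the defect sequence has degree exactly $2m-2$.

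The main obstacle is this last step: producing the weight polynomial $p$ and establishing the identity $\|f\|_{\HH(b)}^2=\|f\|_{H^2}^2+D_\lambda^m(pf)$ together with the \emph{equivalence} (rather than equality) of norms, the latter following since $p(\lambda)\neq0$ makes multiplication by $p$ bounded and boundedly invertible on the local structure at $\lambda$. By contrast, the finite-difference criterion and the exponential-polynomial analysis are comparatively routine.
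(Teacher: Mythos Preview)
The paper does not prove this theorem; it is quoted from \cite[Theorem~1.1]{LGR} and used as a black box, so there is no in-paper argument to compare against.

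Your overall strategy---rank-one defect, the finite-difference identity \eqref{eq:DeltaRecurssion} turning the $n$-isometry condition into polynomiality of $j\mapsto T^{*j}\Delta T^j$, an exponential-polynomial argument forcing a single unimodular eigenvalue for $T^*|_{\HN}$ and hence $n=2m$, then Lemma~\ref{lemma:MateFunctionAndPhi} to recover the space structure---is sound and is essentially the route taken in \cite{LGR}. Two points deserve comment. First, your appeal to Lemma~\ref{proposition:FiniteRankDmuIsRational} for $\rank\Delta=\rank b$ is formally illegitimate: that lemma carries the hypothesis $\HH[B]=\HD(\mu)$, which you do not have here. The equality $\rank\Delta=\rank B$ holds in general and is cited as such in the first sentence of that lemma's proof (from \cite[Theorem~4.6, Lemma~5.1]{LGR}); you should cite that directly. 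Second, and more substantively, you correctly identify the exact norm identity $\|f\|_{\HH(b)}^2=\|f\|_{H^2}^2+D_\lambda^m(pf)$ as the crux, but your treatment of it (``read off $p$ from $q$'', ``compute the excess form on $\varphi H^2\oplus\HN$'') is a description of what must be done rather than an argument. In \cite{LGR} this step requires an explicit calculation relating the $\HH(b)$-inner product on $\HN$ to the Taylor data at $\lambda$, and producing the correct $p$ with $p(\lambda)\neq 0$; nothing in your sketch pins $p$ down or verifies the identity. The set equality $\HH(b)=\HD_\lambda^m$ and the equivalence of norms, by contrast, do follow from your decomposition argument once one knows $\varphi$ vanishes to exact order $m$ at $\lambda$.
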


\subsection{The relation between $\HH(b)$ and $\HD_{\vec{\mu}}$}

It's an exercise to show that
\begin{align}\label{eq:DeltaRecurssion}
	\Delta^{(i)}=\sum_{l=0}^{i-1}(-1)^{i-1-l}\binom{i-1}{l}T^{*l}\Delta T^l
\end{align}
for $i\in\Z_{\ge 1}$, cf. \cite[Proposition~2.1]{Ry19}. We will use this to prove Theorem~\ref{theorem:H(b)=Dvecmu}, which is the technical main result of this section.

\begin{proof}[Proof of Theorem \ref{theorem:H(b)=Dvecmu}]
	If $b\in\HS_0(\C,\C)$ satisfies \eqref{eq:MateExistenceCondition} and $T=(M_z,\HH(b))$ is a strict $n$-isometry, then $n=2m$ by Theorem~\ref{theorem:n-isometriesOnH(b)}. Moreover, here exists $\lambda\in \T$ and a polynomial $p(z)=\sum_{j=0}^{m-1}c_jz^j$ with $p(\lambda)\ne 0$ such that
	\begin{align*}
		\|f\|^2_{\HH(b)}=\|f\|^2_{H^2}+D_\lambda^m(pf).
	\end{align*}
	By \cite[Theorem~5.5]{AM19}, polynomials are dense in $\HH(b)$, in other words $1$ is cyclic for $T$. Since $b(0)=0$, $\|1\|_{\HH(b)}=1$. By Theorem~\ref{theorem:RydheModel}, $T$ is unitarily equivalent to $T_{\vec{\mu}}=(M_z,\HD_{\vec{\mu}})$, where $\vec{\mu}$ is the normalized allowable $2m$-tuple given by \eqref{eq:DefinitionOfAllowableTuple}, and the unitary equivalence preserves the cyclic vector $1$. It follows from $b(0)=0$ that $1\in\ker T^*$, and therefore $\mu_0=\frac{|dz|}{2\pi}$. Similar to the proof of Theorem~\ref{dirichlettype} we obtain that $\HH(b)=\HD_{\vec{\mu}}$.
	
	To conclude the proof of the direct part of the theorem, it remains to show that $\mu_1,\ldots,\mu_{2m-1}$ are given by \eqref{eq:H(b)=Dvecmu}. It follows from \cite[Lemma~9.3]{LGR} that
	\begin{align*}
		\La \Delta f,g\Ra_{\HH(b)}=\frac{(pf)^{(m-1)}(\lambda)\overline{(pg)^{(m-1)}(\lambda)}}{[(m-1)!]^2}.
	\end{align*}
	In combination with \eqref{eq:DeltaRecurssion}, we can now compute $\mu_i$ in accordance with \eqref{eq:DefinitionOfAllowableTuple}, namely for $k\in\Z_{\geq 0}$ it holds that
	\begin{align*}
		\hat\mu_i(k)&=\La \Delta^{(i)} 1, T^k 1\Ra_{\HH(b)}
		\\
		&=
		\sum_{l=0}^{i-1}(-1)^{i-1-l}\binom{i-1}{l}\La \Delta T^l1, T^lT^k 1\Ra_{\HH(b)}
		\\
		&=
		\sum_{l=0}^{i-1}(-1)^{i-1-l}\binom{i-1}{l}\frac{(z^lp)^{(m-1)}(\lambda)\overline{(z^{k+l}p)^{(m-1)}(\lambda)}}{[(m-1)!]^2}.
	\end{align*}
	It is now an algebraic exercise to show that the above right-hand side agrees with \eqref{eq:H(b)=Dvecmu}.
	
	For the converse direction, suppose $\lambda\in\T$, $p(z)=\sum_{j=0}^{m-1}c_jz^j$ with $p(\lambda)\ne 0$, and $\vec{\mu}=(\frac{|dz|}{2\pi},\mu_1,\ldots,\mu_{2m-1})$ is given by \eqref{eq:H(b)=Dvecmu}. By Proposition \ref{proposition:HequivalenttoHB}, there exists $b\in\HS_0(\C,\C)$ such that \eqref{eq:MateExistenceCondition} is satisfied and
	\begin{align*}
		\|f\|_{\HH(b)}^2=\|f\|_{H^2}^2+D_\lambda^m(pf).
	\end{align*}
	Note that by Theorem \ref{theorem:n-isometriesOnH(b)}, $(M_z,\HH(b))$ is a strict $2m$-isometry. Since the corresponding $\vec{\mu}$ is given by \eqref{eq:H(b)=Dvecmu}, we see that $\vec{\mu}$ is a normalized allowable tuple.
\end{proof}

Recall that differentiation $\mu\mapsto\partial\mu$ in $\HD'$ corresponds to the Fourier multiplier $\hat\mu(k)\mapsto k\hat\mu(k)$. In the context of the above theorem, it is natural to consider the modified differentiation operator $D$ corresponding to $\hat\mu(k)\mapsto|k|\hat\mu(k)$. Indeed, since $\hat\delta_\lambda(k)=\overline{\lambda}^k$, and the right-hand side of \eqref{eq:H(b)=Dvecmu} takes the shape $p(k)\overline{\lambda}^k$ for some polynomial $p$, the distributions associated to a $2m$-isometric shift $(M_z,\HH(b))$ via Theorem~\ref{theorem:RydheModel} are given by $p(D)\delta_\lambda$. In the next result, we make this idea precise in the ``simplest'' case $\HH(b)=\HD_\lambda^m$.

Let $\mu \in \HD'$, then there exists $N \in \Z_{\geq 0}$ such that $|\hat{\mu}(k)| = |\mu(\zeta^{-k})| \lesssim (1+|k|)^N, k \in \Z$. The smallest such $N$ is called the order of $\mu$. In the following, we use the convention that $\binom{n}{j} = 0$ whenever $j < 0$ or $j > n$.
\begin{corollary}\label{corollary:Dlambdam=Dvecmu}
	Let $m\in\Z_{\ge 1}$ and $\lambda\in\T$. The normalized allowable tuple $\vec{\mu} = (\frac{|dz|}{2\pi}, \mu_1 \ldots, \mu_{2m-1})$ corresponding to the expansive strict $2m$-isometry $(M_z,\HD_\lambda^m)$ is given by
	\begin{align*}
		\mu_i =\binom{i-1}{m-1}\frac{\prod_{j=i+1-m}^{m-1}(D+j)}{(2m-1-i)!}\delta_\lambda
	\end{align*}
	for $1\le i \le 2m-2$, and $\mu_{2m-1} =\binom{2m-2}{m-1}\delta_\lambda$. In particular, $\mu_i=0$ for $1\le i\le m-1$, and $\mu_i$ is of order $2m-1-i$ for $m\le i\le 2m-1$.
\end{corollary}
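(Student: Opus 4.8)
The plan is to realize $(M_z,\HD_\lambda^m)$ as one of the spaces $\HH(b)$ treated in Theorem~\ref{theorem:H(b)=Dvecmu} and then specialize formula~\eqref{eq:H(b)=Dvecmu}. The norm on $\HD_\lambda^m$ is $\|f\|^2=\|f\|_{H^2}^2+D_\lambda^m(f)$, which is the case $p\equiv 1$ of the norm in Theorem~\ref{theorem:n-isometriesOnH(b)}(ii) (here $p(z)=1$ has degree $<m$ and $p(\lambda)=1\ne 0$). Hence $(M_z,\HD_\lambda^m)$ is an expansive strict $2m$-isometry, it equals $\HH(b)$ for a suitable $b\in\HS_0(\C,\C)$ satisfying~\eqref{eq:MateExistenceCondition}, and $1$ is a cyclic unit vector lying in $\ker T^*$, so $\mu_0=\frac{|dz|}{2\pi}$. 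Substituting $c_0=1$, $c_1=\dots=c_{m-1}=0$ into~\eqref{eq:H(b)=Dvecmu} (equivalently, into the computation of $\hat\mu_i(k)=\langle\Delta^{(i)}1,T^k1\rangle$ carried out in the proof of Theorem~\ref{theorem:H(b)=Dvecmu}) kills every term with $(j_1,j_2)\ne(0,0)$, leaving, for $k\in\Z_{\ge 0}$,
\begin{align*}
	\hat\mu_i(k)=\overline\lambda^{\,k}\sum_{l=0}^{i-1}(-1)^{i-1-l}\binom{i-1}{l}\binom{l}{m-1}\binom{l+k}{m-1}.
\end{align*}

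The heart of the matter is to evaluate this inner sum in closed form. Let $\Delta$ denote the forward difference operator $(\Delta g)(x)=g(x+1)-g(x)$; then the sum equals $(\Delta^{i-1}q_k)(0)$ with $q_k(l)=\binom{l}{m-1}\binom{l+k}{m-1}$. Using $\Delta^{j}\binom{x}{a}=\binom{x}{a-j}$ together with the Leibniz rule $\Delta^{n}(fg)(x)=\sum_{j=0}^{n}\binom{n}{j}(\Delta^{j}f)(x)\,(\Delta^{n-j}g)(x+j)$ with $f(l)=\binom{l}{m-1}$ and $g(l)=\binom{l+k}{m-1}$, one notes that $(\Delta^{j}f)(0)=\binom{0}{m-1-j}$ vanishes unless $j=m-1$; only that single term survives, and it gives
\begin{align*}
	\sum_{l=0}^{i-1}(-1)^{i-1-l}\binom{i-1}{l}\binom{l}{m-1}\binom{l+k}{m-1}
	&=\binom{i-1}{m-1}\binom{k+m-1}{2m-1-i}\\
	&=\binom{i-1}{m-1}\,\frac{\prod_{j=i+1-m}^{m-1}(k+j)}{(2m-1-i)!}.
\end{align*}
In particular, for $1\le i\le m-1$ the prefactor $\binom{i-1}{m-1}$ is $0$, so $\mu_i=0$.

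It then remains to translate this into a statement about distributions. Since $\widehat{\delta_\lambda}(k)=\overline\lambda^{\,k}$ and $D$ is the Fourier multiplier $\hat\mu(k)\mapsto|k|\hat\mu(k)$, the distribution $\binom{i-1}{m-1}\frac{\prod_{j=i+1-m}^{m-1}(D+j)}{(2m-1-i)!}\delta_\lambda$ has $k$-th Fourier coefficient, for $k\ge 0$ (so $|k|=k$), exactly equal to the right-hand side above, hence to $\hat\mu_i(k)$. As both this distribution and $\mu_i$ obey the Hermitian symmetry $\hat\mu(-k)=\overline{\hat\mu(k)}$ built into~\eqref{eq:DefinitionOfAllowableTuple}, they coincide; when $i=2m-1$ the product is empty and $0!=1$, recovering $\mu_{2m-1}=\binom{2m-2}{m-1}\delta_\lambda$. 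Finally, for $m\le i\le 2m-1$ the quantity $\binom{|k|+m-1}{2m-1-i}$ is a polynomial in $|k|$ of exact degree $2m-1-i$ with positive leading coefficient, and $\binom{i-1}{m-1}>0$, so $\mu_i$ has order precisely $2m-1-i$; together with the vanishing for $i\le m-1$ this completes the argument. The only step requiring real care is the binomial identity, but as indicated it collapses to a one-line application of the finite-difference Leibniz rule, so I do not anticipate a serious obstacle.
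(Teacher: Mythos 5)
Your proposal is correct, and its overall structure --- realizing $(M_z,\HD_\lambda^m)$ as the case $p\equiv 1$ of Theorem~\ref{theorem:n-isometriesOnH(b)} and then specializing \eqref{eq:H(b)=Dvecmu} with $c_0=1$, $c_1=\dots=c_{m-1}=0$ --- is exactly the paper's. The only genuine divergence is in how the resulting binomial sum is evaluated. The paper first extracts the factor $\binom{i-1}{m-1}$ via $\binom{i-1}{l}\binom{l}{m-1}=\binom{i-1}{m-1}\binom{i-m}{i-1-l}$ and then proves
\begin{align*}
\sum_{l=m-1}^{i-1}(-1)^{i-1-l}\binom{i-m}{i-1-l}\binom{l+k}{m-1}=\binom{m+k-1}{2m-1-i}
\end{align*}
by induction on $i$, checking the base case $i=m$ and showing that taking a forward difference in $k$ on both sides increments $i$. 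You instead recognize the whole sum as the $(i-1)$st forward difference at $0$ of $l\mapsto\binom{l}{m-1}\binom{l+k}{m-1}$ and apply the discrete Leibniz rule; since the $j$th difference of $\binom{l}{m-1}$ at $l=0$ equals $\binom{0}{m-1-j}$ and survives only for $j=m-1$, the sum collapses in one step to $\binom{i-1}{m-1}\binom{k+m-1}{2m-1-i}$. This is a slicker, non-inductive derivation that also explains structurally where the prefactor $\binom{i-1}{m-1}$, and hence the vanishing for $i\le m-1$, comes from; the paper's induction is more elementary but longer. Your remaining steps (Hermitian symmetry, the empty product at $i=2m-1$, and reading off the order $2m-1-i$ from the exact degree of the polynomial in $|k|$) all check out. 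One cosmetic caution: you overload the symbol $\Delta$ as the forward-difference operator, which collides with the paper's defect operator; a different symbol would be safer.
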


\begin{proof}
	According to Theorem~\ref{theorem:H(b)=Dvecmu}, the distributions $\mu_1,\ldots,\mu_{2m-1}$ are given by \eqref{eq:H(b)=Dvecmu}, with $c_0=1$, $c_1,\ldots,c_{m-1}=0$. In other words,
	\begin{align*}
		\hat\mu_i(k)=\overline{\lambda}^k\sum_{l=0}^{i-1}(-1)^{i-1-l}\binom{i-1}{l}\binom{l}{m-1}\binom{l+k}{m-1},\quad k\ge 0.
	\end{align*}
	Since all terms with $l<m-1$ vanish, it is immediate that $\mu_1,\ldots,\mu_{m-1}=0$. We henceforth consider $m\le i \le 2m-1$. One easily verifies that
	\begin{align*}
		\binom{i-1}{l}\binom{l}{m-1}=\binom{i-1}{m-1}\binom{i-m}{i-1-l}.
	\end{align*}
For $k\ge 0$, the $k$-th Fourier coefficient of $\prod_{j=i+1-m}^{m-1}(D+j)\delta_\lambda$ is $\prod_{j=i+1-m}^{m-1}(k+j) \overline{\lambda}^k$, so in order to finish the proof it suffices to establish the identity
	\begin{align*}
		\sum_{l=m-1}^{i-1}(-1)^{i-1-l}\binom{i-m}{i-1-l}\binom{l+k}{m-1}=\binom{m+k-1}{2m-1-i}.
	\end{align*}
	To this end, denote the above left- and right-hand sides by $\mathrm{LHS}(i,k,m)$ and $\mathrm{RHS}(i,k,m)$. It is clear that $\mathrm{LHS}(m,k,m)=\mathrm{RHS}(m,k,m)$. The elementary identity
	\begin{align*}
		\binom{N}{M-1}+\binom{N-1}{M-1}=\binom{N}{M}
	\end{align*}
	and a rearrangement of terms yields that
	\begin{align*}
		\mathrm{LHS}(i,k+1,m)-\mathrm{LHS}(i,k,m)
		&=
		\sum_{l=m-1}^{i}(-1)^{i-l}\left[\binom{i-m}{i-l}+\binom{i-m}{i-1-l}\right]\binom{l+k}{m-1}
		\\
		&=
		\sum_{l=m-1}^{i}(-1)^{i-l}\binom{i+1-m}{i-l}\binom{l+k}{m-1}
		\\
		&=
		\mathrm{LHS}(i+1,k,m).
	\end{align*}
	By simple calculation,
	\[\mathrm{RHS}(i,k+1,m)-\mathrm{RHS}(i,k,m)=\mathrm{RHS}(i+1,k,m),\]
	and the desired identity follows by induction with respect to $i$.
\end{proof}

\subsection{The relation between $\HH[B]$ and $\HD_{\vec{\mu}}$}

Suppose $B\in\HS_0(\ell^2,\C)$, $\rank B<\infty$, and $\HH[B]$ is $M_z$-invariant. Let $T=(M_z,\HH[B])$ and $\Delta = T^*T-I$. Note that $\rank B = \rank\Delta $. In particular, if $\rank\Delta=1$ and $T$ is an $n$-isometry, then $\HH[B]=\HH(b)$ is a higher order local Dirichlet space equipped with an equivalent norm according to Theorem~\ref{theorem:n-isometriesOnH(b)}. For higher finite ranks of $\Delta$, $\HH[B]$ instead is a finite intersection of such spaces.

\begin{theorem}[{\cite[Theorem~1.5]{LGR}}]\label{MainTheoremfinite} Let $B\in\HS_0(\ell^2,\C)$ be such that $T=(M_z,\HH(B))$ is bounded and satisfies $\rank \Delta <\infty$.

Then $T$ is a $2m$-isometry, if and only if there are $\lambda_1, \dots, \lambda_k \in \T$, pairs of integers $(m_1,n_1), \dots (m_k, n_k)$ such that $1 \le n_j\le m_j \le m$ for all $j$,   and there are polynomials $\{p_{ij}\}_{1\le j\le k, 1\le i\le n_j}$ with degree $p_{ij} \le m_j-1$ for $1\le j \le k$, $1\le i\le n_j$  such that

\begin{align}\label{2mIsoNorm}\|f\|^2_{\HH(B)}=\|f\|^2_{H^2} + \sum_{j=1}^k\sum_{i=1}^{n_j} D^{m_j}_{\lambda_j}(p_{ij}f).\end{align}

There is a choice of parameters so that $p_{1j}(\lambda_j)\ne 0$  for each $j$ and such that $\sum_{j=1}^kn_j=\rank \Delta$. If all that is the case, then $\HH(B)=\bigcap_{j=1}^k \HD_{w_j}^{m_j}$ with equivalence of norms.
\end{theorem}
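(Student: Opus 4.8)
The plan is to dispatch the ``if'' half of the equivalence (from the norm formula \eqref{2mIsoNorm}, conclude that $T$ is a $2m$-isometry and that $\HH(B)=\bigcap_j\HD^{m_j}_{\lambda_j}$) by elementary means, and to treat the ``only if'' half (produce the data $\lambda_j$, $(m_j,n_j)$, $p_{ij}$ from $T$) as the substantial part, routing it partly through the $\HD_{\vec\mu}$-model. For the ``if'' half, the two elementary identities $D^m_\lambda(zg)-D^m_\lambda(g)=|g^{(m-1)}(\lambda)|^2/[(m-1)!]^2$ and $D^m_\lambda((z-\lambda)^sg)=D^{m-s}_\lambda(g)$ — both obtained by inspecting the decomposition $g=\sum_{l<m}(z-\lambda)^lg^{(l)}(\lambda)/l!+(z-\lambda)^mh$ — give $\langle\Delta f,f\rangle=\|M_zf\|^2-\|f\|^2=\sum_{j,i}|(p_{ij}f)^{(m_j-1)}(\lambda_j)|^2/[(m_j-1)!]^2$, a positive form of rank $\le\sum_jn_j$. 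Since the quadratic form of $\Delta^{(k)}$ is additive over the summands $\|f\|^2_{H^2}+D^{m_j}_{\lambda_j}(p_{ij}f)$, and each such summand — after factoring out the zero of $p_{ij}$ at $\lambda_j$ via the second identity and invoking Theorem~\ref{theorem:n-isometriesOnH(b)} — has the defect sequence of a $2m'$-isometry with $m'\le m_j\le m$, we get $\Delta^{(2m)}=0$. For the identification: finiteness of $\|f\|_{\HH(B)}$ forces $D^{m_j}_{\lambda_j}(p_{1j}f)<\infty$, hence $f\in\HD^{m_j}_{\lambda_j}$ with comparable norm (Theorem~\ref{theorem:n-isometriesOnH(b)}), while $D^{m_j}_{\lambda_j}(p_{ij}f)\lesssim\|f\|^2_{\HD^{m_j}_{\lambda_j}}$ since multiplication by a polynomial is bounded on $\HD^{m_j}_{\lambda_j}$; hence $\HH(B)=\bigcap_j\HD^{m_j}_{\lambda_j}$ with equivalent norms.

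For the ``only if'' half, assume $T=(M_z,\HH(B))$ is a $2m$-isometry with $\rank\Delta=n<\infty$. One first establishes that $B$ is rational (equivalently $\dim[\ran\Delta]_{T^*}<\infty$, by \cite[Theorem~1.2]{LGR}) — this is where $\rank\Delta<\infty$ is combined with the relation $\Delta^{(2m)}=0$ — and hence that polynomials are dense in $\HH(B)$ (\cite[Theorem~5.5]{AM19}), so that $1\in\ker T^*$ is a cyclic unit vector. By Theorem~\ref{theorem:RydheModel}, $T$ is then unitarily equivalent to $(M_z,\HD_{\vec\mu})$ with $\mu_0=\frac{|dz|}{2\pi}$, and, as in the proof of Theorem~\ref{dirichlettype}, in fact $\HH(B)=\HD_{\vec\mu}$; the task becomes to understand $\vec\mu$, equivalently $\Delta$. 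The first structural fact is that the leading distribution $\mu_{2m-1}$ is a finitely atomic measure on $\T$: from $\Delta^{(2m)}=0$ and \eqref{eq:DeltaRecurssion} we have $T^*\Delta^{(2m-1)}T=\Delta^{(2m-1)}\ge0$, of finite rank $\le(2m-1)n$; hence $T$ preserves $\ker\Delta^{(2m-1)}$, the space $\HG:=\overline{\ran\Delta^{(2m-1)}}$ is finite-dimensional and $T^*$-invariant, and $W:=(\Delta^{(2m-1)})^{1/2}T(\Delta^{(2m-1)})^{-1/2}$ is a unitary on $\HG$; writing $v_0:=(\Delta^{(2m-1)})^{1/2}1$ one gets $\mu_{2m-1}(|p|^2)=\langle\Delta^{(2m-1)}p(T)1,p(T)1\rangle=\|p(W)v_0\|^2$, and the spectral theorem for the unitary $W$ yields $\mu_{2m-1}=\sum_{j=1}^kb_j\delta_{\lambda_j}$ with $b_j>0$ and distinct $\lambda_j\in\T$. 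Equivalently $\Delta^{(2m-1)}=\sum_jb_jK^B_{\lambda_j}\otimes K^B_{\lambda_j}$.

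It remains to extract, at each point, an order and polynomials. By the remark after Lemma~\ref{lemma:MateFunctionAndPhi}, $\HN:=[\ran\Delta]_{T^*}$ (finite-dimensional, $=\deg B$ by \cite[Theorem~1.2]{LGR}) is spanned by kernels $K^B_\lambda$ and their derivatives; as in the computation in the proof of Theorem~\ref{dirichlettype}, $\HN$ carries no eigenvector of $T^*$ at an interior point (an interior eigenvalue $\overline\mu$ would make $\Delta^{(2m)}$ pick up a nonzero piece $\propto(1-|\mu|^2)^{2m-1}K^B_\mu\otimes K^B_\mu$), so $\HN=\bigoplus_j\HN_j$ with $\HN_j$ the $\overline{\lambda_j}$-generalized eigenspace, of some dimension $m_j$, and $\lambda_j\in\T$. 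Since $\ran\Delta\subseteq\HN$, the bilinear form $\langle\Delta f,g\rangle$ is a nonnegative combination of terms $L(f)\overline{L(g)}$ where each $L$ is a functional built from $f(\lambda_j),\dots,f^{(m_j-1)}(\lambda_j)$ for a single $j$; diagonalizing blockwise produces functionals $L_{ij}(f)=(p_{ij}f)^{(m_j-1)}(\lambda_j)/(m_j-1)!$ with $\deg p_{ij}<m_j$, at most $n_j\le m_j$ of them per block, and — after a normalization within each block — $p_{1j}(\lambda_j)\ne0$ and $\sum_jn_j=\rank\Delta$. Iterating \eqref{eq:DeltaRecurssion} on the $\lambda_j$-block and invoking Theorem~\ref{theorem:n-isometriesOnH(b)} blockwise shows $T$ is a strict $2\max_j m_j$-isometry, whence $m_j\le m$. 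Finally, the first identity above exhibits $\langle\Delta f,f\rangle$ as the $M_z$-defect of $\|f\|^2_{H^2}+\sum_{j,i}D^{m_j}_{\lambda_j}(p_{ij}f)$; since this norm and $\|\cdot\|_{\HH(B)}$ agree on constants, have polynomials dense, and satisfy $\langle\cdot,1\rangle=$ evaluation at $0$, the Gram-matrix recursion $\langle z^{j+1},z^{k+1}\rangle=\langle z^j,z^k\rangle+\langle\Delta z^j,z^k\rangle$ (equivalently, the uniqueness part of Theorem~\ref{theorem:RydheModel}) forces the two norms to coincide, and $\HH(B)=\bigcap_j\HD^{m_j}_{\lambda_j}$ follows as in the ``if'' half.

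The step I expect to be the main obstacle is the one just sketched: organizing the finite-dimensional $\C[T^*]$-module $[\ran\Delta]_{T^*}$, together with the positive finite-rank form $\Delta$ inside it, into local blocks at the $\lambda_j$, and proving that the interaction between distinct points genuinely decouples so that the defect form is exactly a blockwise sum of higher-order point evaluations with the stated constraints $n_j\le m_j\le m$. This is the assertion that a bounded finite-rank $2m$-isometric shift on an $\HH(B)$-space is, up to equivalence of norms, nothing more than a finite intersection of higher-order local Dirichlet spaces of order $\le m$; the rank-one case, Theorem~\ref{theorem:n-isometriesOnH(b)}, is the template, and the remaining work — finiteness of $[\ran\Delta]_{T^*}$ (rationality of $B$), the absence of interior eigenvalues, the blockwise diagonalization, and the canonical choice of parameters — is precisely this module-theoretic bookkeeping.
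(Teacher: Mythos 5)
This statement is imported verbatim from \cite[Theorem~1.5]{LGR}; the paper gives no proof of it, so there is no in-paper argument to compare your proposal against. Judged on its own terms, your ``if'' direction is essentially complete and correct: the two local identities, the additivity of the quadratic form of $\Delta^{(k)}$ via \eqref{eq:DeltaRecurssion}, and the observation that $l\mapsto|(z^lp_{ij}f)^{(m_j-1)}(\lambda_j)|^2$ is a polynomial in $l$ of degree $\le 2m_j-2$ (so its $(2m-1)$-th finite difference vanishes) do give $\Delta^{(2m)}=0$, and the norm-comparison argument for $\HH[B]=\bigcap_j\HD^{m_j}_{\lambda_j}$ is sound. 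The quotient/unitary argument showing that $\mu_{2m-1}$ is a finitely atomic positive measure on $\T$ is also correct in substance (though $W$ should be defined on the quotient of $\HH$ by $\ker\Delta^{(2m-1)}$ rather than literally as $(\Delta^{(2m-1)})^{1/2}T(\Delta^{(2m-1)})^{-1/2}$ on $\overline{\ran\Delta^{(2m-1)}}$, since $T$ need not preserve that range). The closing uniqueness argument matching the two Gram matrices is the same device the paper uses in the proof of Theorem~\ref{dirichlettype} and is fine.

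The ``only if'' direction, however, contains genuine gaps at exactly the three places where the real content of \cite[Theorem~1.5]{LGR} lives, and these cannot be absorbed as ``module-theoretic bookkeeping''. First, the finite-dimensionality of $\HN=[\ran\Delta]_{T^*}$ (equivalently, rationality of $B$) is asserted but not proved: $\Delta^{(2m)}=0$ together with $\rank\Delta<\infty$ shows that the operators $T^{*l}\Delta T^l$ span a finite-dimensional space, but this controls $\sum_l\ran(T^{*l}\Delta T^l)$, not $\sum_l T^{*l}\ran\Delta$, and bridging that gap is a theorem in \cite{LGR} (the paper itself invokes it, via \cite[Theorem~1.4]{LGR}, in the proof of Proposition~\ref{basictwomiso}). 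Second, the exclusion of interior eigenvalues of $T^*|\HN$ is argued only by analogy with the computation in Theorem~\ref{dirichlettype}, where $\Delta$ was a sum of kernel projections; here $\Delta$ is an arbitrary positive finite-rank operator with range in $\HN$, and one must also rule out interior points appearing in the generalized (non-semisimple) spectrum of $T^*|\HN$ --- this is \cite[Theorem~8.10]{LGR} and is not a one-line consequence of positivity of $\Delta^{(2m-1)}$. Third, the blockwise diagonalization producing functionals of the special form $f\mapsto(p_{ij}f)^{(m_j-1)}(\lambda_j)$ with $\deg p_{ij}\le m_j-1$, $n_j\le m_j\le m$, and $p_{1j}(\lambda_j)\ne 0$ requires showing that \emph{every} functional supported on the $\lambda_j$-block of $\HN$ that can occur in $\Delta$ is of this restricted shape (an arbitrary positive form on the span of $K^B_{\lambda_j}$ and its derivatives would only give combinations $\sum_s a_sf^{(s)}(\lambda_j)$, and one must prove the top-order coefficient structure and the bound $n_j\le m_j$); you flag this as the main obstacle but offer no mechanism for it. As it stands the proposal is a reasonable reconstruction of the \emph{strategy} of \cite{LGR}, not a proof.
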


In the setting of the above theorem, we have
\begin{align*}
	\La\Delta f,g\Ra_{\HH[B]}
	=
	\sum_{j=1}^k\sum_{i=1}^{n_j}\frac{(p_{ij}f)^{(m_j-1)}(\lambda_j)\overline{(p_{ij}g)^{(m-1)}(\lambda_j)}}{[(m_j-1)!]^2}.
\end{align*}
So proceeding just as in the proof of Theorem~\ref{theorem:H(b)=Dvecmu}, we can now use \eqref{eq:H(b)=Dvecmu} to calculate the normalized allowable tuple $\vec\mu=(\frac{|dz|}{2\pi},\mu_1,\ldots,\mu_{2m-1})$ such that $\HD_{\vec{\mu}}=\HH[B]$. Before we state the corresponding higher rank analogue of Theorem~\ref{theorem:H(b)=Dvecmu}, we attempt to introduce a more compact notation. For $N\in\Z_{\ge 1}$, $k\in\Z_{\geq 0}$ and fixed $m\in\Z_{\ge 1}$, consider the $m\times m$-matrices
\begin{align}\label{eq:HM_N(k)}
	\HM_N(k)=\left(\sum_{l=0}^{N-1}(-1)^{N-1-l}\binom{N-1}{l}\binom{l+j_1}{m-1}\binom{l+k+j_2}{m-1}\right)_{0\le j_1,j_2\le m-1}.
\end{align}
To each polynomial $p(z)=\sum_{j=0}^{m-1}c_jz^j$ we associate the vector of monomials $\vec p(z)=(c_jz^j)_{0\le j\le m-1}$. With this notation, \eqref{eq:H(b)=Dvecmu} takes the form
\begin{align*}
	\hat\mu_{N}(k)=\overline{\hat \mu_N(-k)}=\overline{\lambda}^k\La \HM_N(k)\vec p(\lambda),\vec p(\lambda)\Ra_{\C^m}.
\end{align*}
Using the same argument as in the proof of Theorem \ref{theorem:H(b)=Dvecmu}, we obtain the following result.
\begin{theorem}\label{theorem:H[B]=Dvecmu}
	Let $\HM_{N}(k)$ be the matrices given by \eqref{eq:HM_N(k)}.
	
	Suppose $B\in\HS_0(\ell^2,\C)$, $\rank B<\infty$, and $\HH[B]$ is $M_z$-invariant. If $T=(M_z,\HH[B])$ is a $2m$-isometry for some $m\in\Z_{\ge 1}$, then there exists $\lambda_1,\ldots,\lambda_k\in\T$, positive integers $m_1,\ldots,m_k$ and $n_1,\ldots,n_k$ with $1\le n_j\le m_j\le m$, and polynomials $\{p_{ij}\}_{1\le j \le k,1\le i\le n_j}$ with the following properties: $\deg p_{ij}\le m_j-1$ and $p_{1j}(\lambda)\ne 0$ for $1\le j\le k$, and if $\vec{\mu}=(\frac{|dz|}{2\pi},\mu_1,\ldots,\mu_{2m-1})$ is given by
	\begin{align}\label{eq:H[B]=Dvecmu}
		\hat \mu_{N}(l)=\overline{\hat \mu_{N}(-l)}
		=
		\sum_{j=1}^k\overline{\lambda}_j^l\sum_{i=1}^{n_j}\La \HM_N(l)\vec{p}_{ij}(\lambda_j),\vec{p}_{ij}(\lambda_j)\Ra_{\C^m},
	\end{align}
	where $l\in\Z_{\geq 0}$ and $1\le N\le 2m-1$, then $\HH[B]=\HD_{\vec{\mu}}$.
	
	Conversely, if $m\in\Z_{\ge 1}$, $\lambda_1,\ldots,\lambda_k\in\T$, $m_1,\ldots,m_k$ and $n_1,\ldots,n_k$ are positive integers with $1\le n_j\le m_j\le m$, $\{p_{ij}\}_{1\le j \le k,1\le i\le n_j}$ are polynomials with $\deg p_{ij}\le m_j-1$ and $p_{1j}(\lambda)\ne 0$ for $1\le j\le k$, and if $\vec{\mu}=(\frac{|dz|}{2\pi},\mu_1,\ldots,\mu_{2m-1})$ is given by \eqref{eq:H[B]=Dvecmu}, then $\vec{\mu}$ is a normalized allowable $2m$-tuple, and there exists a rational $B\in\HS_0(\ell^2,\C)$ with $\rank B<\infty$ such that $\HH[B]=\HD_{\vec{\mu}}$.
\end{theorem}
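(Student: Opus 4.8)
The plan is to obtain the theorem by combining three facts already in hand: the structure theorem for finite-rank $2m$-isometric $\HH[B]$-shifts (Theorem~\ref{MainTheoremfinite}), the $\HD_{\vec{\mu}}$-model (Theorem~\ref{theorem:RydheModel}), and the monomial computation carried out for the rank-one case in the proof of Theorem~\ref{theorem:H(b)=Dvecmu}; almost every step is a citation of one of these. For the direct implication, suppose $B\in\HS_0(\ell^2,\C)$, $\rank B<\infty$, $M_z\HH[B]\subseteq\HH[B]$, and $T=(M_z,\HH[B])$ is a $2m$-isometry. By Theorem~\ref{MainTheoremfinite} there are $\lambda_1,\dots,\lambda_k\in\T$, integers $1\le n_j\le m_j\le m$, and polynomials $\{p_{ij}\}$ with $\deg p_{ij}\le m_j-1$, $p_{1j}(\lambda_j)\ne 0$, $\sum_j n_j=\rank\Delta$, for which $\|f\|_{\HH[B]}^2=\|f\|_{H^2}^2+\sum_{j,i}D_{\lambda_j}^{m_j}(p_{ij}f)$. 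In particular $\HH[B]=\bigcap_j\HD_{\lambda_j}^{m_j}$ with equivalent norms, so $B$ is rational, and by \cite[Theorem~5.5]{AM19} polynomials are dense in $\HH[B]$; thus $1$ is a cyclic unit vector for $T$, and $B(0)=0$ gives $1\in\ker T^*$. Theorem~\ref{theorem:RydheModel} now produces the normalized allowable $2m$-tuple $\vec{\mu}$ defined by \eqref{eq:DefinitionOfAllowableTuple} together with a unitary $U\colon\HH[B]\to\HD_{\vec{\mu}}$ with $UT=T_{\vec{\mu}}U$ and $U1=1$. Since $1\in\ker T^*$ we get $\mu_0=\frac{|dz|}{2\pi}$; and since $U$ fixes $1$ and intertwines $M_z$, it fixes every polynomial, whence $\HH[B]=\HD_{\vec{\mu}}$ with identical norms --- exactly the argument used in the proof of Theorem~\ref{dirichlettype}.

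It then remains to verify that the tuple produced by \eqref{eq:DefinitionOfAllowableTuple} coincides with the one in \eqref{eq:H[B]=Dvecmu}. I would first reduce to a common Dirichlet order by writing $D_{\lambda_j}^{m_j}(p_{ij}f)=D_{\lambda_j}^{m}\bigl((z-\lambda_j)^{m-m_j}p_{ij}f\bigr)$, which follows at once from the Taylor-polynomial description of $\HD_\lambda^m$; then the defect identity recorded after Theorem~\ref{MainTheoremfinite} becomes $\La\Delta f,g\Ra_{\HH[B]}=\sum_{j,i}(q_{ij}f)^{(m-1)}(\lambda_j)\overline{(q_{ij}g)^{(m-1)}(\lambda_j)}/[(m-1)!]^2$ with $q_{ij}:=(z-\lambda_j)^{m-m_j}p_{ij}$ of degree $<m$. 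Putting $f=1$, $g=z^k$ and expanding $\Delta^{(N)}$ by \eqref{eq:DeltaRecurssion} yields $\hat{\mu}_N(k)=\sum_l(-1)^{N-1-l}\binom{N-1}{l}\sum_{j,i}(z^l q_{ij})^{(m-1)}(\lambda_j)\overline{(z^{l+k}q_{ij})^{(m-1)}(\lambda_j)}/[(m-1)!]^2$; expanding each $q_{ij}$ in monomials and using $|\lambda_j|=1$ to factor out $\overline{\lambda_j}^{k}$ rewrites this as $\sum_j\overline{\lambda_j}^{k}\sum_i\La\HM_N(k)\vec{q}_{ij}(\lambda_j),\vec{q}_{ij}(\lambda_j)\Ra_{\C^m}$, which is \eqref{eq:H[B]=Dvecmu}. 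The only genuine calculation is the rearrangement of binomial coefficients that identifies the inner $l$-sum with an entry of $\HM_N(k)$, and this is precisely the ``algebraic exercise'' already performed in the rank-one case in the proof of Theorem~\ref{theorem:H(b)=Dvecmu}, now carried out termwise inside the double sum over $(i,j)$.

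For the converse, starting from $m$, $\lambda_j\in\T$, integers $1\le n_j\le m_j\le m$, and polynomials $p_{ij}$ with $\deg p_{ij}\le m_j-1$ and $p_{1j}(\lambda_j)\ne 0$, I would introduce $\HH=\bigcap_j\HD_{\lambda_j}^{m_j}$ equipped with the norm $\|f\|^2=\|f\|_{H^2}^2+\sum_{j,i}D_{\lambda_j}^{m_j}(p_{ij}f)$. Routine checks give that $\HH$ is complete, that $M_z\HH\subseteq\HH$ with $M_z$ bounded and that $\HH$ is $L$-invariant (each $\HD_{\lambda_j}^{m_j}$ is both $M_z$- and $L$-invariant, and $M_z$ is bounded by the closed graph theorem), that $1\in\HH$ with $\La f,1\Ra_\HH=f(0)$, and --- from $\La\Delta f,f\Ra_\HH=\sum_{j,i}|(p_{ij}f)^{(m_j-1)}(\lambda_j)|^2/[(m_j-1)!]^2\ge 0$, i.e. expansiveness of $M_z$ --- that $(L,\HH)$ is a contraction. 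Proposition~\ref{proposition:HisHB} then gives $\HH=\HH[B]$ for some $B\in\HS_0(\ell^2,\C)$, and $\rank B=\rank\Delta\le\sum_j n_j<\infty$, so $B$ is rational. Theorem~\ref{MainTheoremfinite} applied to this $B$ shows $T=(M_z,\HH[B])$ is a $2m$-isometry; $1$ is again a cyclic unit vector in $\ker T^*$, and the computation of the direct part identifies the tuple attached to $T$ by Theorem~\ref{theorem:RydheModel} with the prescribed $\vec{\mu}$. Hence $\vec{\mu}$ is a normalized allowable $2m$-tuple and $\HH[B]=\HD_{\vec{\mu}}$.

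Since nearly every step is an appeal to an existing result, the substance of the proof is the unwinding of \eqref{eq:DefinitionOfAllowableTuple} into \eqref{eq:H[B]=Dvecmu}, and the point requiring the most care is the passage to a single Dirichlet order $m$: the matrices $\HM_N(\cdot)$ are built from one fixed integer $m$, whereas Theorem~\ref{MainTheoremfinite} naturally supplies orders $m_j\le m$ that may be strictly smaller, so one must track carefully how the substitution $p_{ij}\mapsto(z-\lambda_j)^{m-m_j}p_{ij}$ (equivalently $D_{\lambda_j}^{m_j}\mapsto D_{\lambda_j}^{m}$) acts on the monomial-coefficient vectors entering the quadratic forms in \eqref{eq:H[B]=Dvecmu}. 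Once this bookkeeping is settled, the remainder of the argument is the rank-one computation of Theorem~\ref{theorem:H(b)=Dvecmu} applied coordinatewise.
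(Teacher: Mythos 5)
Your proof is correct and takes essentially the same approach as the paper, which gives no details beyond citing Theorem~\ref{MainTheoremfinite}, the resulting defect formula, and the phrase ``using the same argument as in the proof of Theorem~\ref{theorem:H(b)=Dvecmu}''. Your explicit reduction to the common order $m$ via $D_{\lambda_j}^{m_j}(p_{ij}f)=D_{\lambda_j}^{m}\bigl((z-\lambda_j)^{m-m_j}p_{ij}f\bigr)$ is exactly the bookkeeping the paper leaves implicit, and it correctly identifies that the vectors entering \eqref{eq:H[B]=Dvecmu} should be read as those of $(z-\lambda_j)^{m-m_j}p_{ij}$ when $m_j<m$.
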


We now present a condition for the defect operator on a $\HD_{\vec{\mu}}$ space to be of finite rank.
\begin{proposition}\label{basictwomiso}
Let $\vec{\mu} = (\frac{|dz|}{2\pi}, \mu_1 \ldots, \mu_{n-1}) \in (\HD')^{n}$ $(n \geq 2)$ be a normalized allowable $n$-tuple, and $\mu_{n-1} \neq 0$. Then $T = (M_z, \HD_{\vec{\mu}})$ is expansive with $\rank \Delta < \infty$, if and only if $T$ is expansive, $n = 2m$ for some $m \geq 1$, and there is a polynomial $p(z) = \prod_{j=1}^k (z-\lambda_j)^{m_j}$ with $k \leq \rank \Delta$, $\lambda_j \in\T, j = 1, \ldots, k$, $m = \max\{m_1, \ldots, m_k\}$ such that the following holds
\begin{align}\label{finiterank}
\sum_{s=1}^{2m-1} D_{\mu_s,s-1}(p(T)q) = 0, ~~q \text{ polynomials},
\end{align}
and (\ref{finiterank}) doesn't hold if $p$ is replaced by any factor $p_1$ of $p$ with $\deg p_1 < \deg p$.
\end{proposition}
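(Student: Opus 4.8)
The plan is to reduce the statement to the $\HH[B]$-picture via Theorem~\ref{theorem:RydheModel}, then apply the structural Theorem~\ref{MainTheoremfinite}, and finally translate back. First I would observe that $\mu_0 = \frac{|dz|}{2\pi}$ forces the cyclic vector $1$ into $\ker T^*$, so $\dim\ker T^* = 1$; combined with the hypothesis that $T$ is expansive (and $T$ is automatically analytic, being an $n$-isometry) and cyclic, Proposition~\ref{proposition:HequivalenttoHB} (or rather the fact that $\HD_{\vec\mu}\subseteq H^2$ contractively in this case, as noted after Theorem~\ref{theorem:RydheModel}) gives $B\in\HS_0(\ell^2,\C)$ with $M_z\HH[B]\subseteq\HH[B]$ and $\HD_{\vec\mu} = \HH[B]$, with $\rank\Delta = \rank B$. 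Now the proposition is a statement purely about $T = (M_z,\HH[B])$: we must show that $\rank\Delta<\infty$ together with the $n$-isometry property holds if and only if $n = 2m$ is even and the norm identity \eqref{finiterank} holds for a minimal-degree $p$ of the stated product form.

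For the forward direction, assume $\rank\Delta<\infty$ and $T$ is a strict $n$-isometry (strictness because $\mu_{n-1}\ne 0$, as recorded in the text after Theorem~\ref{theorem:RydheModel}). By the scalar prototype Theorem~\ref{theorem:n-isometriesOnH(b)}, or more precisely its finite-rank generalization Theorem~\ref{MainTheoremfinite}, $n$ must be even, $n=2m$, and there exist $\lambda_1,\dots,\lambda_k\in\T$, pairs $(m_j,n_j)$ with $1\le n_j\le m_j\le m$, and polynomials $\{p_{ij}\}$ with $\deg p_{ij}\le m_j-1$, $p_{1j}(\lambda_j)\ne 0$, such that
\begin{align*}
\|f\|^2_{\HH[B]} = \|f\|^2_{H^2} + \sum_{j=1}^k\sum_{i=1}^{n_j} D^{m_j}_{\lambda_j}(p_{ij}f),
\end{align*}
with $\sum_j n_j = \rank\Delta$, so in particular $k\le\rank\Delta$, and $m = \max\{m_1,\dots,m_k\}$ after relabelling. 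Set $p(z)=\prod_{j=1}^k (z-\lambda_j)^{m_j}$. The point is that $p(T)q = pq$ vanishes to order $\ge m_j$ at each $\lambda_j$, so its Taylor polynomial $T_{m_j-1}(p_{ij}pq,\lambda_j)$ is zero and hence
\begin{align*}
D^{m_j}_{\lambda_j}(p_{ij}p q) = \|(\,\cdot\,-\lambda_j)^{-m_j}p_{ij}pq\|_{H^2}^2,
\end{align*}
a finite quantity; but more importantly $\|p q\|_{\HH[B]}^2 - \|pq\|_{H^2}^2 = \sum_{j,i}D^{m_j}_{\lambda_j}(p_{ij}pq)$, which I must show equals $0$. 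This is \emph{not} automatic — it requires noting that the inner-product/norm \eqref{2mIsoNorm} is realized on $H^2$ and that multiplying by $p$ kills the contribution of the extra defect terms because $\La\Delta f,g\Ra$ depends only on the $(m_j-1)$-jets of $p_{ij}f$ at $\lambda_j$, and these jets vanish when $f$ is divisible by $p$. Concretely, the displayed formula for $\La\Delta f,g\Ra_{\HH[B]}$ just after Theorem~\ref{MainTheoremfinite} shows $\Delta p(T) = 0$, so $T^*T p(T) = p(T)$ on the dense set of polynomials, which is exactly \eqref{finiterank} rewritten (since $\sum_s D_{\mu_s,s-1}(g) = \|g\|_{\HD_{\vec\mu}}^2 - \|g\|_{H^2}^2 = \La\Delta g,g\Ra$ for analytic $g$, because $\mu_0$ is Lebesgue measure). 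Minimality of $\deg p$ then follows because if some factor $p_1$ with $\deg p_1 < \deg p$ also satisfied \eqref{finiterank}, i.e. $\Delta p_1(T) = 0$, then $\ran\Delta\perp p_1(T)\HH[B]$; but $p_1(T)\HH[B]$ has finite codimension equal to $\deg p_1 < \deg p$, whereas one checks from Theorem~\ref{MainTheoremfinite} that the codimension of $\{f : \Delta f = 0\} = \bigcap_j\ker\big(f\mapsto \text{jets of } p_{ij}f \text{ at }\lambda_j\big)$ is at least $\deg p = \sum_j m_j$ (it is $\sum_j m_j$ when the $n_j$ and $p_{ij}$ are chosen so that $p_{1j}(\lambda_j)\ne 0$), a contradiction.

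For the converse, assume $T$ is expansive, $n = 2m$, and \eqref{finiterank} holds for $p(z)=\prod_{j=1}^k(z-\lambda_j)^{m_j}$ with $m = \max_j m_j$, and $p$ is minimal with this property. As above, \eqref{finiterank} says precisely $\Delta p(T) = 0$ on polynomials, hence $\ran\Delta\subseteq (p(T)\HH[B])^\perp$; since $p(T)\HH[B] = pH^2\cap\HH[B]$ has codimension $\le\deg p = \sum_j m_j$ in $\HH[B]$ (the quotient being spanned by classes of $z^l$, $0\le l\le\deg p-1$), we get $\rank\Delta\le\deg p<\infty$. The $n$-isometry property is given (it is $\Delta^{(2m)}=0$, which holds for $T = (M_z,\HD_{\vec\mu})$ by Theorem~\ref{theorem:RydheModel}), and strictness is $\mu_{2m-1}\ne 0$, also given. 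Finally, $k\le\rank\Delta$: feeding the relation $\Delta p(T)=0$ back through Theorem~\ref{MainTheoremfinite} (or directly: $T$ is a strict $2m$-isometry with $\rank\Delta<\infty$, so that theorem applies and yields some product $\prod(z-\lambda_j')^{m_j'}$ annihilating $\Delta$ with $\sum_j n_j'=\rank\Delta\ge k'$; minimality of $p$ forces this product to divide $p$ and, by the strict $2m$-isometry hypothesis, to equal $p$), giving $k\le\rank\Delta$ and $m = \max_j m_j$ consistent with being a strict $2m$-isometry.

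I expect the main obstacle to be the careful bookkeeping of codimensions in the minimality argument: showing that $\dim\ker\Delta^\perp$ equals exactly $\sum_j m_j$ — neither more nor less — under the normalization $p_{1j}(\lambda_j)\ne 0$, and matching this against $\deg p_1$ for an arbitrary proper factor $p_1$. This requires unpacking how the jet functionals $f\mapsto (p_{ij}f)^{(m_j-1)}(\lambda_j)$ interact across different $j$ and $i$, which is linear algebra over the space of polynomials modulo $p$, but needs the non-vanishing $p_{1j}(\lambda_j)\ne 0$ to guarantee that the jet at $\lambda_j$ genuinely sees all $m_j$ orders. Everything else — the translation between $\sum_s D_{\mu_s,s-1}$ and $\La\Delta\,\cdot\,,\cdot\Ra$, the identity $\Delta p(T)=0 \Leftrightarrow$ \eqref{finiterank}, and the codimension bound $\rank\Delta\le\deg p$ — is routine once Theorems~\ref{theorem:RydheModel} and~\ref{MainTheoremfinite} are in hand.
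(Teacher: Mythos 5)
Your overall skeleton --- translate \eqref{finiterank} into the operator identity $\Delta p(T)=0$, deduce finite rank in the converse from the finite codimension of $p\cdot(\text{polynomials})$, and extract $p$ in the forward direction from the finite-rank structure theory --- matches the paper's proof. But two specific steps are wrong as written.

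First, the identity you use to translate \eqref{finiterank} is false: $\sum_{s=1}^{2m-1}D_{\mu_s,s-1}(g)$ is \emph{not} equal to $\|g\|_{\vec\mu}^2-\|g\|_{H^2}^2$ (note the index shift: the sum involves $D_{\mu_s,s-1}$, whereas the norm involves $D_{\mu_s,s}$), and $\|g\|_{\vec\mu}^2-\|g\|_{H^2}^2$ is not $\La\Delta g,g\Ra$ either --- already for $n=2$ the former is $\int D_\lambda(g)\,d\mu_1(\lambda)$ while the latter is $\int|g|^2\,d\mu_1$ by \eqref{eq:DirichletMzIdentity}. The correct statement, which is exactly what the paper invokes, is \cite[Proposition~3.4]{Ry19}: $\sum_{s=1}^{n-1}D_{\mu_s,s-1}(f)=\La\Delta f,f\Ra$. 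Your desired conclusion ($\eqref{finiterank}\Leftrightarrow\Delta^{1/2}p(T)=0$ on polynomials, hence $\Delta p(T)=0$ by density) is right, but it needs that citation rather than your chain of equalities.

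Second, and more seriously, your minimality argument in the forward direction fails. You claim $\mathrm{codim}\,\ker\Delta\ge\deg p=\sum_j m_j$, but $\ker\Delta$ is cut out by the $\sum_j n_j=\rank\Delta$ linear functionals $f\mapsto(p_{ij}f)^{(m_j-1)}(\lambda_j)$, so its codimension equals $\rank\Delta$, which is typically strictly smaller than $\deg p$: for $\HH(b)=\HD_1^2$ one has $\rank\Delta=1$ but $\deg p=2$. Hence no contradiction arises by comparing codimensions, and the proper factor $p_1$ is not excluded by your argument. A direct repair is possible: if $p_1$ vanishes at some $\lambda_j$ to order $<m_j$, use $p_{1j}(\lambda_j)\ne0$ to pick $q=(z-\lambda_j)^{m_j-1-s}$, where $s$ is the order of vanishing of $p_{1j}p_1$ at $\lambda_j$, so that $(p_{1j}p_1q)^{(m_j-1)}(\lambda_j)\ne0$ and therefore $\La\Delta(p_1q),p_1q\Ra>0$. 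The paper sidesteps this bookkeeping entirely: it takes $p$ to be the reflection of the minimal polynomial of $T^*$ restricted to $\HN=[\ran\Delta]_{T^*}$, for which $\Delta p(T)=0$ and the failure of \eqref{finiterank} for every proper factor are immediate from minimality of the minimal polynomial, and it obtains $k\le\rank\Delta$, $\lambda_j\in\T$, $m=\max_j m_j$ from \cite[Theorems~1.4 and 8.10]{LGR} rather than from the norm formula of Theorem~\ref{MainTheoremfinite}.
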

\begin{proof}
By \cite[Proposition 3.4]{Ry19},
\begin{align}\label{sumdelta}
\sum_{s=1}^{2m-1} D_{\mu_s,s-1}(p(T)q) = \langle \Delta (pq), pq\rangle, ~~q \text{ polynomials}.
\end{align}
If $T $ is expansive with $\rank \Delta < \infty$, then by \cite[Theorem 1.4]{LGR} $n$ is even, say $n = 2m$, and $\HN: = [\ran \Delta]_{T^*}$ is finite dimensional. Suppose $\tilde{p}(z) = \prod_{j=1}^k (z-\overline{\lambda_j})^{m_j}$ is the minimal polynomial of $T^*|\HN$, then $\tilde{p}(T^*)\Delta = 0$. By \cite[Theorem 8.10]{LGR} $k \leq \rank \Delta$, $\lambda_j \in\T, j = 1, \ldots, k$, $m = \max\{m_1, \ldots, m_k\}$. Let $p(z) = \overline{\tilde{p}(\overline{z})}$. Note that $\tilde{p}(T^*)\Delta = 0$, if and only if $\Delta p(T) = 0$, which is equivalent to $\Delta^{1/2} p(T) = 0$. Thus (\ref{finiterank}) follows from (\ref{sumdelta}).

Conversely, if $T = (M_z, \HD_{\vec{\mu}})$ is expansive, $n = 2m$ and (\ref{finiterank}) holds, then $\Delta \geq 0$ and $\|\Delta^{1/2} (pq)\| = 0$, $q$ polynomials. Since by Theorem \ref{theorem:RydheModel} polynomials are dense in $\HD_{\vec{\mu}}$, the closure of $\{pq: q \text{ polynomials}\}$ has codimension $\leq \deg p$ in $\HD_{\vec{\mu}}$, we conclude that $\Delta$ has finite rank. In this case, by assumption $p$ is the minimal polynomial of $P_\HN T|\HN$.
\end{proof}

\begin{corollary}\label{infiniterank}
Let $\vec{\mu} = (\frac{|dz|}{2\pi}, \mu_1 \ldots, \mu_{n-1}) \in (\HD')^n$ $(n \geq 2)$ with $\mu_i$ positive measures, $1 \leq i \leq n-1$ and $\mu_{n-1} \neq 0$. Then
\begin{itemize}
\item[(i)] $T = (M_z, \HD_{\vec{\mu}})$ is expansive.
\item[(ii)] $\Delta = T^*T - I$ has finite rank if and only if $n = 2$ and $\mu_1$ is a finitely atomic measure.
\end{itemize}
\end{corollary}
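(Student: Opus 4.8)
The plan is to settle part (i) by a known expansivity criterion, and to reduce part (ii) to the rank-one theory already in place, isolating the $n\ge 3$ obstruction inside the leading measure $\mu_{n-1}$.

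For (i): since $\mu_0=\frac{|dz|}{2\pi}$ and $\mu_1,\dots,\mu_{n-1}$ are nonnegative, the tuple $\vec{\mu}$ consists exclusively of nonnegative measures, so $(M_z,\HD_{\vec{\mu}})$ is expansive by \cite[Remark~6.9]{Ry19}. (One could instead note that, by \cite[Proposition~3.4]{Ry19}, $\langle\Delta q,q\rangle=\sum_{s=1}^{n-1}D_{\mu_s,s-1}(q)$ for polynomials $q$, that each summand is $\ge 0$ because $P_{\mu_s}\ge 0$, and that polynomials are dense in $\HD_{\vec{\mu}}$ by Theorem~\ref{theorem:RydheModel}.) I will also use the routine fact that when $n=2$ the space attached to $\vec{\mu}=(\frac{|dz|}{2\pi},\mu_1)$ equals the Richter--Aleman space $\HD(\mu_1)$: for $\mu_1$ supported on $\T$ one has $P_{\mu_1}=U_{\mu_1}$ on $\D$, whence $D_{\mu_1,1}(f)=\int_{\D}|f'|^2U_{\mu_1}\,dA=\int_{\T}D_\lambda(f)\,d\mu_1(\lambda)$ and $D_{\frac{|dz|}{2\pi},0}(f)=\|f\|_{H^2}^2$; cf.\ \cite{Ry19}.

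With this reduction in hand, the ``if'' direction of (ii) is immediate: if $n=2$ and $\mu_1$ is finitely atomic, then $\HD_{\vec{\mu}}=\HD(\mu_1)$ and Theorem~\ref{theorem:HBDmuequal}, applied with $\mu=\mu_1$, gives $\rank\Delta<\infty$ (in fact $\rank\Delta$ is the number of atoms of $\mu_1$). For the ``only if'' direction, assume $\rank\Delta<\infty$. By (i), $T$ is expansive, and $\mu_{n-1}\ne 0$ by hypothesis, so Proposition~\ref{basictwomiso} applies: $n=2m$ for some $m\ge 1$, and there is a polynomial $p(z)=\prod_{j=1}^k(z-\lambda_j)^{m_j}$ with $\lambda_j\in\T$ and $m=\max\{m_1,\dots,m_k\}$ such that $\sum_{s=1}^{2m-1}D_{\mu_s,s-1}(p(T)q)=0$ for every polynomial $q$. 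Since each $\mu_s$ with $1\le s\le 2m-1$ is nonnegative, every summand vanishes separately; in particular $D_{\mu_{2m-1},2m-2}(pq)=0$ for all $q$ (here $p(T)q=pq$ because $T=M_z$). Now $\mu_{2m-1}$ is a \emph{nonzero} nonnegative measure, so its Poisson extension $P_{\mu_{2m-1}}$ is strictly positive on $\D$; if $m\ge 2$, then $2m-2\ge 1$ and the integrand $|g^{(2m-2)}(z)|^2P_{\mu_{2m-1}}(z)(1-|z|^2)^{2m-3}$ defining $D_{\mu_{2m-1},2m-2}(g)$ is strictly positive on $\D$ wherever $g^{(2m-2)}\ne 0$, so $D_{\mu_{2m-1},2m-2}(g)=0$ forces $g^{(2m-2)}\equiv 0$, i.e.\ $\deg g\le 2m-3$. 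Applying this with $g=pq$ and $\deg q$ arbitrarily large is absurd, so $m=1$ and $n=2$; then $\HD_{\vec{\mu}}=\HD(\mu_1)$ with $\rank\Delta<\infty$, and Theorem~\ref{theorem:HBDmuequal} forces $\mu_1$ to be finitely atomic.

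The step I expect to be the main obstacle is making the last positivity argument rigorous, i.e.\ verifying that the order-$(2m-2)$ harmonically weighted Dirichlet integral of a polynomial $g$ against a nonzero nonnegative measure is strictly positive (possibly $+\infty$, but never $0$) whenever $\deg g\ge 2m-2$. This should follow by monotone convergence: the integral over $r\D$ is nondecreasing in $r$ and is already strictly positive for any fixed $r\in(0,1)$, since $P_{\mu_{2m-1}}(z)(1-|z|^2)^{2m-3}>0$ there and $g^{(2m-2)}$ is a nonzero polynomial. The bookkeeping of normalizing constants, and the slightly different convention for $D_{\mu,0}$, are what make it cleaner to close the final $n=2$ step via Theorem~\ref{theorem:HBDmuequal} rather than by unwinding $D_{\mu_1,0}(pq)=\int_{\T}|p|^2|q|^2\,d\mu_1$ by hand; the identification $\HD_{(\frac{|dz|}{2\pi},\mu_1)}=\HD(\mu_1)$ is routine.
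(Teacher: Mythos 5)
Your proof is correct and follows essentially the same route as the paper: part (i) via nonnegativity of the harmonically weighted Dirichlet integrals, the ``if'' direction via the $n=2$ identification of $\HD_{\vec{\mu}}$ with $\HD(\mu_1)$, and the ``only if'' direction via Proposition~\ref{basictwomiso}. Your positivity argument---that each summand in \eqref{finiterank} must vanish separately and that $D_{\mu_{2m-1},2m-2}(pq)$ cannot vanish for $q$ of large degree because $P_{\mu_{2m-1}}>0$ on $\D$---is exactly the justification for the paper's unproved assertion that \eqref{finiterank} fails for every polynomial $p$ when $n\ge 3$, so you have in fact supplied a detail the paper omits.
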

\begin{proof}
(i) This is clear.

(ii) If $n = 2$ and $\mu_1$ is a finitely atomic measure, then it follows from (\ref{eq:DirichletMzIdentity}) that $\Delta = T^*T - I$ has finite rank.

If $n \geq 3$, then (\ref{finiterank}) doesn't hold for any polynomial $p$. Thus $\Delta = T^*T - I$ has infinite rank if $n \geq 3$.  So if $\Delta = T^*T - I$ has finite rank, then $n =2$. Then Theorem \ref{theorem:HBDmuequal} implies that $\mu_1$ is a finitely atomic measure.
\end{proof}

\begin{remark}
	The case considered above, where each $\mu_i$ is a positive measure, has also been studied in~\cite{GGR}, where a complete characterization in terms of operator inequalities is given.
\end{remark}

\subsection{Examples}

It may happen that the support of $\mu_{2m-1}$ in Theorem \ref{theorem:H[B]=Dvecmu} is strictly contained in $\{\lambda_1, \ldots, \lambda_k\}$.
\begin{example}\label{supportstrict}
Let $\vec{\mu} = (\frac{|dz|}{2\pi}, \delta_1, (D+1)\delta_{-1}, 2\delta_{-1})$. Then $\vec{\mu} = \vec{\mu}^1 + \vec{\mu}^2-(\frac{|dz|}{2\pi}, 0, 0, 0)$, where $\vec{\mu}^1 = (\frac{|dz|}{2\pi}, \delta_1, 0, 0), \vec{\mu}^2 = (\frac{|dz|}{2\pi}, 0, (D+1)\delta_{-1}, 2\delta_{-1})$. Thus by (\ref{eq:DirichletMzIdentity}) and Corollary \ref{corollary:Dlambdam=Dvecmu},
$$\langle \Delta f, f\rangle = |f(1)|^2 + |f'(-1)|^2, f \text{ polynomials}.$$
So $\Delta = K_1^{\vec{\mu}} \otimes K_1^{\vec{\mu}} + \overline{\partial} K_{-1}^{\vec{\mu}}\otimes \overline{\partial} K_{-1}^{\vec{\mu}}\geq 0$, and $\Delta$ has rank $2$, where $\overline{\partial} K_{w}^{\vec{\mu}} = \frac{\overline{\partial} K_{w}^{\vec{\mu}}}{\partial\overline{w}}$. But $\mu_3 = 2 \delta_{-1}$ has support $\{-1\} \subsetneq \{-1, 1\}$.
\end{example}

It is now easy to give a tuple of distributions $\vec{\mu}$ such that $(M_z, \HD_{\vec{\mu}})$ is expansive and the defect operator is of infinite rank.
\begin{example}\label{conversenothold}
$(i)$ Let $\vec{\mu} = (\frac{|dz|}{2\pi}, 0, 0, \delta_1)$. Then by Corollary \ref{infiniterank} $\rank \Delta = \infty$.

$(ii)$ Let $\vec{\mu} = (\frac{|dz|}{2\pi}, 0, (D+1)\delta_{1}, 3\delta_{1}) = \vec{\mu}^1 + \vec{\mu}^2 - (\frac{|dz|}{2\pi}, 0, 0, 0)$, where $\vec{\mu}^1 = (\frac{|dz|}{2\pi}, 0, (D+1)\delta_{1}, 2\delta_{1})$, $\vec{\mu}^2 = (\frac{|dz|}{2\pi}, 0, 0, \delta_1)$. Note that $(M_z, \HD_{\vec{\mu}})$ is expansive. So $\HD_{\vec{\mu}}$ is equal to some $\HH[B]$. But by part $(i)$ and the discussion in Example \ref{supportstrict} we have $\rank \Delta = \infty$.
\end{example}

The following lemma is well-known, for instance by a combination of \cite[Chapter~VI.3]{Gar} and \cite[Proposition~1.11]{HKZ}.

\begin{lemma}\label{lemma:LittlewoodPaley}
	Let $n\in\Z_{\ge 1}$. An analytic function $f\colon \D\to\C$ belongs to $H^2$ if and only if $f^{(n)}\in L^2(\D,(1-|z|^2)^{2n-1}\,dA(z))$. Moreover,
	\begin{align*}
		\int_{\D}|f^{(n)}(z)|^2(1-|z|^2)^{2n-1}\, dA(z)\lesssim \|f\|_{H^2}^2.
	\end{align*}
\end{lemma}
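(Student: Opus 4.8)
The plan is to reduce the statement to a power-series computation. First I would write $f(z)=\sum_{k\ge 0}a_kz^k$ for the Taylor expansion of the analytic function $f$ on $\D$, so that $\|f\|_{H^2}^2=\sum_{k\ge 0}|a_k|^2$ (with the convention that this equals $+\infty$ when $f\notin H^2$). Since $f^{(n)}(z)=\sum_{k\ge n}a_k\frac{k!}{(k-n)!}z^{k-n}$, expanding $|f^{(n)}|^2$, using orthogonality of the monomials on the circles $|z|=r$, and invoking monotone convergence gives
\begin{align*}
	\int_{\D}|f^{(n)}(z)|^2(1-|z|^2)^{2n-1}\,dA(z)
	=
	\sum_{k\ge n}|a_k|^2\left(\frac{k!}{(k-n)!}\right)^2 I_{k-n},
\end{align*}
where $I_m:=\int_{\D}|z|^{2m}(1-|z|^2)^{2n-1}\,dA(z)$. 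Passing to polar coordinates, $I_m=\pi\int_0^1 t^m(1-t)^{2n-1}\,dt=\pi\,m!\,(2n-1)!/(m+2n)!$ by the Beta integral. Substituting $m=k-n$ and simplifying the factorials then yields the clean identity
\begin{align*}
	\int_{\D}|f^{(n)}(z)|^2(1-|z|^2)^{2n-1}\,dA(z)
	=
	\pi(2n-1)!\sum_{k\ge n}|a_k|^2\,w_{n,k},
	\qquad
	w_{n,k}:=\prod_{j=0}^{n-1}\frac{k-j}{k+n-j}.
\end{align*}

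With this identity in hand, both directions follow from trivial estimates on the weights $w_{n,k}$. Each factor $\frac{k-j}{k+n-j}$ lies in $(0,1]$ for $k\ge n$, so $w_{n,k}\le 1$; hence the right-hand side is at most $\pi(2n-1)!\sum_k|a_k|^2=\pi(2n-1)!\|f\|_{H^2}^2$, which is precisely the asserted inequality, and in particular shows that $f\in H^2$ implies $f^{(n)}\in L^2(\D,(1-|z|^2)^{2n-1}\,dA)$. For the reverse implication I would use that $w_{n,k}\to 1$ as $k\to\infty$, so there is $k_0=k_0(n)$ with $w_{n,k}\ge\tfrac12$ for all $k\ge k_0$; then the identity gives
\begin{align*}
	\sum_{k\ge k_0}|a_k|^2\le\frac{2}{\pi(2n-1)!}\int_{\D}|f^{(n)}(z)|^2(1-|z|^2)^{2n-1}\,dA(z)<\infty,
\end{align*}
and since $a_0,\dots,a_{k_0-1}$ are finite ($f$ being analytic at $0$), we conclude $\sum_k|a_k|^2<\infty$, i.e. $f\in H^2$.

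I do not expect a genuine obstacle here: the argument is a routine term-by-term computation, and in fact it could simply be replaced by a citation to the two references indicated before the statement. The only places needing a little care are the justification of the interchange of sum and integral (immediate from monotone convergence, as all terms are nonnegative), the evaluation of the Beta integral $\int_0^1 t^m(1-t)^{2n-1}\,dt$, and the factorial bookkeeping that rewrites $\left(\frac{k!}{(k-n)!}\right)^2\frac{(k-n)!\,(2n-1)!}{(k+n)!}$ as $(2n-1)!\prod_{j=0}^{n-1}\frac{k-j}{k+n-j}$.
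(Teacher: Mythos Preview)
Your proof is correct. The paper does not actually prove this lemma; it merely states that the result is well-known and cites \cite[Chapter~VI.3]{Gar} and \cite[Proposition~1.11]{HKZ}. Your direct power-series computation, reducing everything to the Beta integral and the elementary bounds $0<w_{n,k}\le 1$, $w_{n,k}\to 1$, is exactly the standard argument underlying those references, and you have carried it out cleanly.
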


\begin{example}
	By Lemma~\ref{lemma:LittlewoodPaley}, the Dirichlet-type integral
	\begin{align*}
		D_{\frac{|dz|}{2\pi},2}(f) =\frac{1}{2\pi}\int_\D |f''(z)|^2 (1-|z|^2)\, dA(z)
	\end{align*}
	is finite if and only if $f'\in H^2$. We note that any such $f$ is continuous on $\T$, see e.g. \cite{GL18}. We use this information to provide an example of a normalized allowable $4$-tuple $\vec{\mu}$ for which $f\in\HD_{\vec{\mu}}$ does not imply that $f'\in H^2$. This gives a negative answer to \cite[Question~7.2]{Ry19}.
	
	The tuple we have in mind is $\vec{\mu} = (\frac{|dz|}{2\pi}, 0, (D+1)\delta_1, 2\delta_1)$. By Corollary~\ref{corollary:Dlambdam=Dvecmu}, $\HD_{\vec{\mu}}=\HD_1^2$. It is immediate from the definition that $\HD_1^2$ contains function which are not continuous on $\T$. In particular, not every $f\in\HD_{\vec{\mu}}$ will satisfy $f'\in H^2$.
\end{example}

\begin{example}
	It was noted in \cite[Example~6.12]{Ry19} that for some allowable $n$-tuples $\vec{\mu}$ with $\mu_{n-1}\ne 0$, the Dirichlet-type integrals $D_{\mu_0,0}(f)$ and $D_{\mu_{n-1},n-1}(f)$ do not control the full norm $\|f\|_{\vec\mu}^2$. The formal statement of this is that
	\begin{align*}
		\sup_{f\in\HD_{\vec{\mu}}\setminus\{0\}}\frac{\|f\|_{\vec\mu}^2}{D_{\mu_0,0}(f)+D_{\mu_{n-1},n-1}(f)}=\infty.
	\end{align*}
	We now complement this with an example of $\vec{\mu}$ for which $\|f\|_{\vec\mu}^2$ does not control $D_{\mu_0,0}(f)$ and $D_{\mu_{n-1},n-1}(f)$, i.e.
	\begin{align}\label{eq:CounterExample}
		\sup_{f\in\HD_{\vec{\mu}}\setminus\{0\}}\frac{D_{\mu_0,0}(f)+D_{\mu_{n-1},n-1}(f)}{\|f\|_{\vec\mu}^2}=\infty.
	\end{align}
	
	Once again we take $\vec{\mu} = (\frac{|dz|}{2\pi}, 0, (D+1)\delta_1, 2\delta_1)$. Then $\HD_{\vec{\mu}}=\HD_1^2=\HH(b)$ for some $b\in\HS_0(\C,\C)$. In particular, $\HD_{\vec{\mu}}$ is contractively contained in $H^2$. Consequently, we may disregard the term $D_{\mu_0,0}(f)=\|f\|_{H^2}^2$ in \eqref{eq:CounterExample}, so if \eqref{eq:CounterExample} does not hold, then $D_{\mu_3,3}(f)\lesssim \|f\|_{\vec{\mu}}^2$. We now show that this leads to a contradiction.
	
	Consider a function $f=(z-1)^2h$, where $h\in H^2$, and recall that
	\begin{align*}
		\|f\|_{\HD_1^2}^2=\|(z-1)^2h\|_{H^2}^2+\|h\|_{H^2}^2\approx \|h\|_{H^2}^2.
	\end{align*}
	The assumption that \eqref{eq:CounterExample} doesn't hold therefore implies that
	\begin{align*}
		D_{\mu_3,3}((z-1)^2h)\lesssim \|h\|_{H^2}^2.
	\end{align*}
	This is in turn equivalent to
	\begin{align*}
		\|h\|_{H^2}^2+D_{\mu_3,3}((z-1)^2h)\lesssim \|h\|_{H^2}^2.
	\end{align*}
	We now show that
	\begin{align}\label{eq:CounterExampleMainClaim}
		\int_\D |(z-1)h^{(3)}(z)|^2(1-|z|^2)^3\, dA(z) \lesssim \|h\|_{H^2}^2+D_{\mu_3,3}((z-1)^2h).
	\end{align}
	Together with the above, this finally leads to the estimate
	\begin{align*}
		\int_\D |(z-1)h^{(3)}(z)|^2(1-|z|^2)^3\, dA(z)\lesssim \|h\|_{H^2}^2.
	\end{align*}
	However, letting $h(z)=\log(1+z)$, the above left-hand side diverges, even though $h\in H^2$. Hence, the assumption that \eqref{eq:CounterExample} is false leads to a contradiction.
	
	It remains to prove the estimate \eqref{eq:CounterExampleMainClaim}. Note that for $f(z)=(z-1)^2h(z)$:
	\begin{align*}
		D_{\mu_3,3}(f)&		=		\frac{1}{12\pi}\int_\D |f^{(3)}(z)|^2\frac{(1-|z|^2)^3}{|z-1|^2}\, dA(z)
		\\
		&=		\frac{1}{12\pi}\int_\D |6h'(z)+6(z-1)h^{(2)}(z)+(z-1)^2h^{(3)}(z)|^2\frac{(1-|z|^2)^3}{|z-1|^2}\, dA(z).
	\end{align*}
Thus
\begin{align*}
& \int_\D |(z-1)h^{(3)}(z)|^2(1-|z|^2)^3\, dA(z)\\
&\lesssim D_{\mu_3,3}(f) + \int_\D |(z-1)h^{(2)}(z)|^2\frac{(1-|z|^2)^3}{|z-1|^2}\, dA(z) + \int_\D |(z-1)h^{(2)}(z)|^2\frac{(1-|z|^2)^3}{|z-1|^2}\, dA(z).
\end{align*}
	Since $\frac{(1-|z|^2)^2}{|z-1|^2}\le 4$ for $z\in\D$, Lemma~\ref{lemma:LittlewoodPaley} implies
	\begin{align*}
		\int_\D |h'(z)|^2\frac{(1-|z|^2)^3}{|z-1|^2}\, dA(z)
		\lesssim
		\int_\D |h'(z)|^2 (1-|z|^2)\, dA(z) \lesssim \|h\|_{H^2}^2.
	\end{align*}
	Similarly,
	\begin{align*}
		\int_\D |(z-1)h^{(2)}(z)|^2\frac{(1-|z|^2)^3}{|z-1|^2}\, dA(z)
		=
		\int_\D |h^{(2)}(z)|^2(1-|z|^2)^3\, dA(z)
		\lesssim
		\|h\|_{H^2}^2.
	\end{align*}
	It thus follows that \eqref{eq:CounterExampleMainClaim} holds.
\end{example}

\begin{example}\label{conditionalconvergent}
	It is shown in \cite[Example~6.13]{Ry19} that there is an allowable tuple $\vec{\mu}$ such that for some $i$ and $f$, the Dirichlet integral $D_{\mu_i,i}(f)$ is conditionally convergent. We now present a normalized allowable $4$-tuple $\vec{\mu}$ such that $(M_z, \HD_{\vec{\mu}})$ is expansive, and $D_{\mu_1,1}(z)$ is conditionally convergent.
	
	By Proposition \ref{proposition:HequivalenttoHB}, there exists $b\in\HS_0(\C,\C)$ such that
	\begin{align*}
		\|f\|^2_{\HH(b)}=\|f\|^2_{H^2}+D_1^2(zf).
	\end{align*}
	By Theorem~\ref{theorem:n-isometriesOnH(b)}, $(M_z,\HH(b))$ is a strict $4$-isometry. It follows from Theorem~\ref{theorem:H(b)=Dvecmu} that $\HH(b)=\HD_{\vec{\mu}}$, where $\vec{\mu}=(\frac{|dz|}{2\pi},\mu_1,\mu_2,\mu_3)$ is given by \eqref{eq:H(b)=Dvecmu}. By calculation, $\mu_1=(D+1)\delta_{1}$, $\mu_2=(D+3)\delta_{1}$, and $\mu_3=2\delta_{1}$.
	
	We now show that
	\begin{align*}
		D_{\mu_1,1}(z) = \lim_{r \rightarrow 1^-}\frac{1}{\pi}\int_{r\D} P_{\mu_1}(z) dA(z)
	\end{align*}
	is conditionally convergent. By calculation,
	\begin{align*}
		P_{\mu_1}(z) = \sum_{k=0}^\infty \hat{\mu}_1(k) z^k + \sum_{k=1}^\infty \hat{\mu}_1(-k) \overline{z}^k = \frac{1}{(1-z)^2} + \frac{1}{(1-\overline{z})^2} - 1.
	\end{align*}
	If $X$ denotes the region $\{z = 1 + r e^{it}\mid 0 < r < \frac{1}{2}, \frac{7\pi}{8} < t < \frac{9\pi}{8}\}$, then
	\begin{align*}
		\int_{\D}|P_{\mu_1}(z)|\, dA(z) & \ge \int_X \left|\frac{1}{(1-z)^2} + \frac{1}{(1-\overline{z})^2}\right|\, dA(z) -\pi\\
		& \gtrsim \int_0^{1/2} r^{-2}\cdot r\, dr - \pi = +\infty.
	\end{align*}
\end{example}


\begin{thebibliography}{99}

\bibitem{Agler90}
J. Agler, A disconjugacy theorem for Toeplitz operators. Amer. J. Math. 112, no. 1, 1--14 (1990).


\bibitem{Al93}
A. Aleman, The Multiplication Operator on Hilbert Spaces of Analytic Functions. Habilitation, Fernuniversitaet hagen, 1993.

\bibitem{AM19}
A. Aleman, B. Malman, Hilbert spaces of analytic functions with a contractive backward shift. J. Funct. Anal. 277, no. 1, 157--199 (2019).

\bibitem{ARS02}
A. Aleman, S. Richter, C. Sundberg, The majorization function and the index of invariant subspaces in the Bergman spaces, J. Analyse Math.
86, 139-182 (2002).

\bibitem{AS95}
J. Agler, M. Stankus, $m$-isometric transformations of Hilbert space. Part I, Integral Equations Operator Theory 21, no. 4, 383--429 (1995).

\bibitem{ARSW19}
N. Arcozzi, S. Rochberg, E.T. Sawyer, B.D. Wick, The Dirichlet space and related function spaces, Math. Surveys Monogr., 239, American Mathematical Society, Providence, RI, 2019.

\bibitem{BallBoloBasics}
J.A. Ball, V. Bolotnikov, De Branges-Rovnyak spaces: Basics and Theory,
Operator Theory, Springer (2015), pp. 631--679.

\bibitem{CGR22}
S. Chavan, S. Ghara, Md R. Reza, The Cauchy dual subnormality problem via de Branges-Rovnyak spaces, Studia Math. 265, no. 3, 315--341 (2022).

\bibitem{CGL}
H. Chen, C. Gu, S. Luo, De Branges-Rovnyak spaces generated by row Schur functions with mate, Bull. Sci. math. 193 (2024) 103434, https://doi.org/10.1016/j.bulsci.2024.103434

\bibitem{CGR10}
N. Chevrot, D. Guillot, T. Ransford, De Branges-Rovnyak spaces and Dirichlet spaces. J. Funct. Anal. 259, no. 9, 2366--2383 (2010).

\bibitem{Co16}
C. Costara, Reproducing kernels for Dirichlet spaces associated to finitely supported measures. Complex Anal. Oper. Theory  10,  no. 6, 1277--1293 (2016).

\bibitem{CR13}
C. Costara, T. Ransford, Which de Branges-Rovnyak spaces are
Dirichlet spaces (and vice versa)? J. Funct. Anal. 265 (12), 3204--3218 (2013).

\bibitem{dBR65}
L. de Branges, J. Rovnyak, Canonical models in quantum scattering theory. In Perturbation Theory and its Applications in Quantum Mechanics (Proc. Adv. Sem. Math. Res. Center, U.S. Army, Theoret. Chem. Inst., Univ. of Wisconsin, Madison, Wis., 1965). Wiley, New York--London--Sydney (1966).

\bibitem{dBR66}
L. de Branges, J. Rovnyak, Square Summable Power Series. Holt, Rinehart and Winston, New York (1966).


\bibitem {EFKKMR16} O. El-Fallah, K.  Kellay, H.  Klaja, J. Mashreghi, T.  Ransford,  Dirichlet spaces with superharmonic weights and de Branges-Rovnyak spaces.
 Complex Anal. Oper. Theory  10  (2016),  no. 1, 97--107.

\bibitem{EKMR14}
O. El-Fallah, K. Kellay, J. Mashreghi, T. Ransford, A primer on the Dirichlet space. Cambridge Tracts in Mathematics, 203. Cambridge University Press, Cambridge, 2014.

\bibitem{FM16}
E. Fricain, J. Mashreghi, The theory of $\mathcal{H}(b)$
spaces. Vol. 1, New Mathematical Monographs 20, Cambridge University Press,
Cambridge, 2016.

\bibitem{FM162}
E. Fricain, J. Mashreghi, The theory of $\mathcal{H}(b)$
spaces. Vol. 2, New Mathematical Monographs 21, Cambridge University Press,
Cambridge, 2016.

\bibitem{Gar}
J. B. Garnett, Bounded analytic functions, Graduate Texts in Mathematics 236, Revised first edition, Springer, New York, 2007.

\bibitem{GGR}
S. Ghara, R. Gupta, Md R. Reza, Analytic $m$-isometries and weighted Dirichlet-type spaces, J. Operator Theory 88, no. 2, 445--477 (2022).

\bibitem{GL18}
C. Gu, S. Luo, Composition and multiplication operators on the derivative Hardy space $S^2(\mathbb{D})$. Complex Var. Elliptic Equ. 63 , no. 5, 599--624 (2018).


\bibitem{HKZ}
H. Hedenmalm, B. Korenblum, K. Zhu, Theory of {B}ergman spaces, Graduate Texts in Mathematics 199, Springer-Verlag, New York, 2000.

\bibitem{KZ15}
K. Kellay, M. Zarrabi, Two-isometries and de Branges-Rovnyak
spaces. Complex Anal. Oper. Theory 9, no. 6, 1325-1335 (2015).

\bibitem{LGR}
S. Luo, C. Gu, S. Richter, Higher order local Dirichlet integrals and de Branges-Rovnyak spaces. Adv. Math. 385 (2021), Paper No. 107748, 47 pp.

\bibitem {LR15}
S. Luo, S.  Richter,  Hankel operators and invariant subspaces of the Dirichlet space. J. Lond. Math. Soc. (2)  91  (2015),  no. 2, 423--438.

\bibitem{PR16}
V.I. Paulsen, M. Raghupathi, An introduction to the theory of reproducing kernel Hilbert spaces. Cambridge Studies in Advanced Mathematics, 152. Cambridge University Press, Cambridge, 2016.


\bibitem{Ri91}
S. Richter, A representation theorem for cyclic analytic two-isometries, Trans. Amer. Math. Soc. 328, 325--349 (1991).

\bibitem {RS91}
S. Richter, C. Sundberg, A formula for the local Dirichlet integral, Michigan Math. J. 38, 355--379 (1991).


\bibitem {Ry19}
E. Rydhe, Cyclic m-isometries and Dirichlet type spaces.  J. Lond. Math. Soc. (2)  99,  no. 3, 733--756 (2019).

\bibitem{Sa94}
D. Sarason, Sub-Hardy Hilbert Spaces in the Unit Disk. Wiley, New York (1994).

\bibitem{Sa97}
D. Sarason, Local Dirichlet spaces as de~Branges--Rovnyak spaces. Proc. Amer. Math. Soc. 125, no.~7, 2133--2139 (1997).

\bibitem{Sa98}
D. Sarason, Harmonically weighted Dirichlet spaces associated with finitely atomic measures, Integral Equations Operator Theory 31 (1998), no. 2, 186-213.

		
\end{thebibliography}
\end{document}